\newlength{\ulength}
\newcommand{\R}{\mathbb{R}}
\newcommand{\bE}{\mathbf{E}}
\newcommand{\union}{\mathop{\cup}}
\newcommand{\connint}{\mathop{\mathrlap{\mkern-0.mu\raisebox{5.5pt}{$\scriptstyle\nabla$}}\int}\limits}
\newcommand{\conninttext}{\mathop{\mathrlap{\mkern-1.5mu\raisebox{2.5pt}{$\scriptscriptstyle\nabla$}}\int}\limits}
\newcommand{\boldd}{\boldsymbol{d}}
\newcommand{\bolddnab}{\boldsymbol{d^\nabla}}
\newcommand{\connintidx}[1]{\mathop{\mathrlap{\mkern-10.mu\raisebox{5.5pt}{$\underset{\scriptscriptstyle#1}{\scriptstyle\nabla}$}}\int}\limits}
\definecolor{OliveGreen}{RGB}{0,110,40}
\newtheorem{theorem}{Theorem}[section]
\newtheorem{theoremanddefinition}[theorem]{Theorem and Definition}
\newtheorem{proposition}[theorem]{Proposition}
\newtheorem{corollary}[theorem]{Corollary}
\newtheorem{lemma}[theorem]{Lemma}
\newtheorem{definition}[theorem]{Definition}
\newtheorem{remark}[theorem]{Remark}
\renewcommand{\abs}[1]{\left\vert#1\right\vert}
\newcommand{\KILLME}[1]{} %to remove large chunks of text
\title{A Discrete Exterior Calculus of Bundle-valued Forms}
\author{\small Theo Braune\thanks{Email: theo.braune@polytechnique.edu}\\ \scriptsize LIX-Ecole Polytechnique, IP Paris, Inria \and \small Yiying Tong\thanks{Email: ytong@msu.edu}\\ \scriptsize Michigan State U. \and \small Fran\c{c}ois Gay-Balmaz\thanks{Email: francois.gb@ntu.edu.sg}\\ \scriptsize Nanyang Technological University \and \small Mathieu Desbrun\thanks{Email: mathieu.desbrun@inria.fr}\\ \scriptsize Inria, Ecole Polytechnique}
\begin{document}
	
	\maketitle
\begin{abstract}
	The discretization of Cartan's exterior calculus of differential forms has been fruitful in a variety of theoretical and practical endeavors: from computational electromagnetics to the development of Finite-Element Exterior Calculus, the development of structure-preserving numerical tools satisfying exact discrete equivalents to Stokes' theorem or the de Rham complex for the exterior derivative have found numerous applications in computational physics.
	However, there have been relatively few efforts to develop a more general discrete calculus, this time for differential forms with values in vector bundles over a combinatorial manifold equipped with a connection.
	In this work, we propose a discretization of the exterior covariant derivative of bundle-valued differential forms while pointing out the importance of choosing local frame fields for numerical evaluation. We demonstrate that our discrete operator mimics its continuous counterpart, satisfies the Bianchi identities on simplicial cells, and contrary to previous attempts at its discretization, ensures numerical convergence to its exact value with mesh refinement under mild assumptions.
\end{abstract}
\vspace{0.5em} 
\noindent \textbf{Keywords:} Discrete exterior calculus, differential forms, vector bundles, structure-preserving discretization

\vspace{-0.1em} 
\noindent \textbf{Mathematical Subject Classification:} 53A70

\section{Introduction}
Over the past few decades, structure-preserving finite-dimensional approximations of Cartan's exterior calculus such as Finite-Element Exterior Calculus (FEEC~\cite{FEEC}) or Discrete Exterior Calculus (DEC~\cite{disc_differential_forms_modelling}) have proven effective in fields as varied as computational physics and geometry processing~\cite{DEC-DGPcourse}. At their heart is a discretization of smooth, \emph{scalar-valued} differential forms as  cochains~\cite{Bossavit_CEM}: the natural pairing
of a cochain with a chain (which, itself, provides a discrete notion of domain) then becomes a discrete analog of the \emph{integration} of a continuous form over a domain.
Defining the \emph{discrete} exterior derivative as the adjoint of the boundary operator on chains thus leads to a structure-preserving discretization of the \emph{continuous} exterior derivative operator  in the sense that it preserves discrete counterparts to fundamental differential properties such as Stokes' theorem or the de Rham cohomology. 

However, to date, far less attention has been dedicated to the discretization of \emph{differential forms with values in a vector bundle} --- even if Cartan's original work made extensive use of such bundle-valued forms. One obvious difficulty is that bundle-valued forms do not seamlessly admit a simple discretization: indeed, the mere pointwise expression of a bundle-valued form requires a pointwise fiber, making it difficult to properly define a discrete counterpart to the exterior covariant derivative where the integration over a domain can only be achieved in a common fiber. An exception is the case of discrete connections on two-dimensional surfaces, which has garnered significant interest in vector field processing~\cite{Crane:2010:TCD,DeGoes:2020} to define parallel transport and connection curvature for instance; but this 2D case can be handled with connections that are seen as angle-valued forms (since 2D rotations are commutative) once a section of the frame bundle has been chosen, while the general case requires the use of discrete forms with values in the group of rotation matrices, bringing significant difficulties due in part by their non-commutativity.

Yet, the use of connection curvature is key in fields such as Yang-Mills theory and relativity in computational physics, since it provides an invariant, frame-independent measure. It is therefore natural to hope that the development of structure-preserving discretizations of the exterior calculus of bundle-valued forms can be beneficial for a wide range of numerical endeavors. This paper thus develops a formal, readily-computable discretization of connections, bundle-valued forms, and connection one-forms, along with a discrete exterior covariant derivative of bundle-valued forms that not only satisfies the well-known Bianchi identities in this discrete realm, but also converges to its smooth equivalent under mesh refinement --- thus fixing a recent attempt at defining such an operator on vector-valued forms\cite{Hirani_Bianchi}, based on synthetic differential geometry~\cite{Kock1997,kock_2006}, which fails to be numerically convergent as we will discuss.

	\section{Smooth Theory of Connections}
\label{sec:continuousWorld}

We begin by first reviewing the geometric concepts that we will discretize, highlighting the properties we wish to preserve in the discrete realm.

\subsection{Preliminaries}

In the study of an $n$-dimensional manifold $M$, we often encounter a vector bundle $\pi: E \rightarrow M$, and a local trivialization $\phi: E|_{U} \rightarrow U \times \mathbb{R} ^r$ with $U \subset M$ being an open subset. This provides us with a local frame field $\{ f_1,...,f_r\}$ on $U$ associated with the canonical basis $\{e_1,...,e_r\}$ of $ \mathbb{R} ^r$ given by $f_a(x)= \phi ^{-1} (x, e_a)$ for each point $x\in U$.
If we switch to another local trivialization $\widetilde \phi: \pi^{-1}(\widetilde U) = E|_{\widetilde U} \rightarrow \widetilde U \times \mathbb{R} ^r$ (where $\widetilde U$ is another open subset of $M$), the resulting local frame $\{ \widetilde f_a\}$ relates to the local frame $\{ f_a \}$ via a matrix of change of bases $A: U \cap \widetilde U \rightarrow GL(r)$ through
\begin{equation}\label{change_basis} 
	f_a = \widetilde f_b A_a^b .
\end{equation}
For a section $s \in \Gamma(E)$ of the vector bundle (i.e., an assignment for each point $p\in M$ of a vector $s(p)$ in the point's fiber $\pi^{-1}(p)$), which we can express locally as $s = f_a s^a  =  \widetilde f_a \widetilde s^{\,a}$, the components $s^a$ and $\widetilde s^{\,a}$ are, instead, related through
$$
\widetilde s^{\,a} = A^a_b  s^b.
$$

\paragraph{Connection.}
Given a vector bundle $ \pi : E\!\to\!M$, a connection $\nabla$ is a bilinear map $\nabla: \Gamma(TM) \!\times\! \Gamma(E) \rightarrow \Gamma(E)$ such that for $g \in C^\infty(M)$, $s \in \Gamma(E)$, and $X \in \Gamma(TM)$, we have
\begin{itemize}
	\item[(i)] $\nabla _X(gs)= g \nabla _X\; s+ d g (X)\; s$\qquad (i.e., it satisfies Leibniz Rule)
	\item[(ii)] $ \nabla _{(gX)}\; s = g \nabla _X\;s$.\qquad\qquad\qquad\qquad (i.e., it is tensorial in $X$)
\end{itemize}

\noindent In local frames, where $s=  f_a s^a$, we can write $\nabla s = f_a \otimes (d s^ a + \omega _b^a s^b ) $, i.e., $\nabla_X s = f_a (d s^ a (X) + \omega _b^a(X) s^b ) $, for all $X \in \Gamma (TM)$, where $\omega \!\in\! \Omega ^1(U, \mathfrak{gl}(r))$ is the \textit{local connection $1-$form} given by $\nabla f_a \!=\! f_b \omega _a^b$.
Using Eq.~\eqref{change_basis} we can express the change of the local connection $1-$form $\widetilde \omega$ for another local trivialization through the \emph{gauge transformation}: \begin{equation}
	\label{connection_trans}
	\omega = A^{-1}\widetilde \omega A+ A^{-1}  {\rm d} A \quad\Leftrightarrow\quad \widetilde \omega = A\omega A^{-1} -  {\rm d} A\; A^{-1}. 
\end{equation} 
This relationship between the local connection one-forms is central to the understanding of the behavior of the connection under changes of local trivializations.

A connection $\nabla$ on the vector bundle $\pi :E \rightarrow M$ naturally induces a connection $\nabla^{\rm End}$ on the endomorphism bundle ${\rm End}(E) \rightarrow M$, whose fiber at $p$ is the space ${\rm End}(E_p)= \operatorname{Hom} (E_p,E_p)$ of endomorphisms of $E_p$.  It is defined by enforcing the Leibniz rule on $L(x)$,
\begin{equation}\label{def_nabla_End} 
	\big( \nabla ^{\rm End}_X L \big)  ( s):= \nabla _X (L(s))- L( \nabla _Xs),
\end{equation} 
for all $L \in \Gamma ( \operatorname{End}(E))$.

\paragraph{Pullback connection.}
Given a smooth map $\Psi\colon M\to N$ between two manifolds and a vector bundle $ \pi : E \rightarrow N$ over $N$, we can define the \emph{pullback bundle} as $ \pi ': \Psi^\ast E\to M$, $(\Psi^\ast E)_p :=E_{\Psi(p)}$. Given a connection $\nabla$ on $E$, we want to equip the pullback bundle with a pullback connection $\Psi^\ast\nabla$ such that, for any given  section $s\in \Gamma E$, we have 
$$(\Psi^\ast\nabla)\Psi^\ast s = \Psi^\ast(\nabla s).$$
Such a connection exists and is called the \emph{pullback connection}. We refer the reader to \cite{milnor1974characteristic} for the proof of its existence and uniqueness.

\paragraph{Parallel transport.}
Parallel transport is a fundamental concept in differential geometry and plays a key role in understanding the geometry of manifolds. Given a curve $\gamma:[0,1] \rightarrow M$ and a vector $v_0 \in E_{\gamma(0)}$ in the fiber at $\gamma(0)$, parallel transport along $\gamma$ is the process of transporting $v_0$ to $\gamma(t)$ while maintaining its direction in the vector bundle. The resulting vector at $\gamma(t)$ is denoted by $v(t)$ and is a solution to the differential equation
\begin{equation}\label{PTM}
	D_tv(t)=0 \quad\text{and}\quad v(0)=v_0,
\end{equation}
where $D_tv$ is the covariant time derivative of curves $v(t) \in E$ associated to a connection $\nabla$ on $E$, given locally by
\begin{equation}\label{local_Dt}
	D_t v= f_a (\partial _t v^a + \omega ^a_b (\dot \gamma ) v^b).
\end{equation}
Here, $v^a$ are the components of $v$ in a local frame ${f_a}$ for the fiber, and $\omega ^a_b$ are the connection coefficients associated with $\nabla$ in $f_a$.

The unique solution to the initial value problem \eqref{PTM} along $\gamma$ yields a one-parameter family of parallel transport maps
\[ \mathcal{R}_{ \gamma ,t} \colon E_{\gamma(t)} \to E_{\gamma(0)}, \]
which is an intrinsic object. We will simply write $ \mathcal{R} _t$ the parallel transport maps, when the choice of the curve $ \gamma $ is obvious from the context.
If we consider a local frame field $f_a(x)$ in a neighborhood of the curve, the linear map can be represented by a matrix $(R_t)^a_b$, via $ \mathcal{R} _t f_a ( \gamma (t))= f_b( \gamma (0))(R_t)_a^b $.
Moreover,  according to Eq.~\eqref{connection_trans}, the matrix $R_t$ is the solution to $\frac{d}{dt}  R_t = R_t\omega_ { \gamma(t)}( \dot  \gamma (t))$.

The solution to this last equation can be written as the path-ordered matrix exponential $P\left(\operatorname{exp}\int_0^t  \omega_ { \gamma( \tau )}( \dot  \gamma ( \tau )) d \tau \right) $. As $\omega$ is applied to the right of $R$, we order the terms with larger $\tau$ to the right. For instance, the second-order term in the power series of the matrix exponential is
$$P\; \frac12 \left( \int_{0}^t \omega_{\gamma(\tau)}(\dot{\gamma}(\tau))\ d\tau\right)^2=
\int_{\tau_2=0}^t \int_{\tau_1=0}^{\tau_2} \omega_{\gamma(\tau_1)}(\dot{\gamma}(\tau_1))\ \omega_{\gamma(\tau_2)}(\dot{\gamma}(\tau_2))\ d\tau_1 d\tau_2.$$

By utilizing the parallel transport matrices associated with \( (f_a) \) along \( \gamma \) as a gauge field, we can derive a new frame field
\[ \widetilde{f}_a(\gamma(t)) = f_b(\gamma(t)) (R_t^{-1})^b_a.
\]
Consequently, in this new frame, \( \Tilde{\omega}(\dot{\gamma}) = 0 \) by construction, and a constant \( \widetilde{v}\in \mathbb{R}^r \) along the curve satisfies the initial value problem \eqref{PTM}, i.e., \( \widetilde{v} = R_t v(t) = v_0 \). Alternatively, in the original frame \( {f_a} \), the parallel transport is given by \( v(t) = R_t^{-1} v_0 \).

If $X \!=\! \dot{\gamma}(0)\in T_{\gamma(0)}M$ and $\alpha\in \Gamma (E)$ is a section of $E$, then the parallel transport map $ \mathcal{R} _t$ can be used to define the covariant derivative of $\alpha$ with respect to $X$ at $\gamma(0)$ as
\begin{equation}\label{eq:par-trans-conn}
	(\nabla_X \alpha)_{\gamma(0)} = \lim_{t\to 0} \frac{ \mathcal{R} _t\alpha_{\gamma(t)} - \alpha_{\gamma(0)}}{t} = \frac{d}{dt} \bigg\vert_{t=0} \mathcal{R} _t\alpha_{\gamma(t)}.
\end{equation}

\paragraph{Integration of bundle-valued forms.}
Once we are given parallel transport maps, we can define a \emph{connection-dependent integration of a bundle valued $1-$form} quite naturally. 
\begin{definition}
	\label{def:bundle-valued-integral}
	Let $ \pi : E\to M$ be a vector bundle with connection $\nabla$ and let $\alpha\in \Omega^1(M;E)$ be a bundle-valued $1-$form. For a curve $\gamma\colon [0,1]\to M$, we define:
	$$\connint_\gamma \alpha = \connint_{[0,1]} \gamma^\ast\alpha =\int_{0}^{1}\mathcal{R}_{ \gamma ,t}\alpha _{ \gamma (t)}\left( \dot  \gamma (t)\right)dt \in E_{ \gamma (0)}, $$
	$$\int_\gamma \alpha = \connint_{[0,1]} \gamma^\ast\alpha =\int_{0}^{1}\mathcal{R}_{ \gamma ,t}\alpha _{ \gamma (t)}\left( \dot  \gamma (t)\right)dt \in E_{ \gamma (0)}, $$
	where $\mathcal{R}_{ \gamma ,t}\colon E_{\gamma(t)}\to E_{\gamma(0)}$ denotes the parallel transport along $\gamma$ for a fixed $t$.  We note that the integral gives a vector in the fiber attached to the initial point of the curve $ \gamma $.
\end{definition}

We can further extend this notion of connection-dependent integral $\conninttext$
 to an arbitrary $k$-form over a retractable region $S\subset M$: the existence of a connection allows us to define a proper integration through parallel transport to bring the final value to a same ``evaluation'' vector space, as defined below. 

\begin{definition}
	\label{def:bundle-valued-integral-higher-order}
	Let $ \pi :E\!\to\! M$ be a vector bundle with connection $\nabla$ and let $\alpha\!\in\! \Omega^k(M;E)$ be a bundle valued $k$-form. For a region $S\!\subset\! M,$ with $S$ homeomorphic to a closed $k$-dimensional ball,
	let $\varphi\colon S\!\to\! B_k(0)\subset \R^k$ be a homeomorphism from $S$ to the unit $k$-dimensional ball centered at 0. 
	Furthermore, let $\Tilde{\gamma}_{\varphi(v),\varphi(p)}\colon [0,1]\to \varphi(S)\!=\!B_k(0)$ be the straight joining path from $\varphi(p)$ to a chosen point $\varphi(v)$ and let $\gamma_{v,p}:=\varphi^{-1}(\Tilde{\gamma}_{\varphi(v),\varphi(p)})\colon [0,1]\!\to\! S$ be the induced curve in $S$.
	For $p\!\in\! S$ let 
	\begin{equation}\label{local_R}
		\mathcal{R}^{\nabla,\varphi_v}_p\in \mathrm{Hom}(E_p,E_v)
	\end{equation}
	be the parallel transport along $\gamma_{v,p}$. We denote by $\mathcal{R}^{\varphi_v}$ the induced transport field, i.e., a function mapping a point $p$ to a linear map between $E_p$ and $E_v$. We define the parameterization-induced connection-dependent integral as 
	$$\connintidx{\varphi_v}_S\alpha = \int_{S} {\mathcal{R}^{\nabla,\varphi_v}\alpha} \, \in \! E_v.$$
	If the choice of the parameterization is clear from context, we will suppress the parameterization $\varphi$ in the notation for the integral and denote the parallel-transport field as $\mathcal{R}^{\nabla,v}$.

\end{definition}

The connection-dependent integral can more generally be defined through a smooth strong deformation retraction $\varphi_v\colon [0,1]\!\times\! S\to S$ onto $v\!\in\! S,$ with the path $\gamma$ defined through $\gamma_p(t)\!=\!\varphi_v(1-t,p)$, since $ \varphi _v$ satisfies $ \varphi _v(0,p)=p$, $ \varphi _v(1,p)\!=\!v$, and $ \varphi _v(t,v)\!=\!v$.

\subsection{Operators and their properties}
\label{sec:operators-smooth-world}
We end this section with a review of the basic operators (and their properties) for which we will provide discrete equivalents in the remainder of this paper.

\paragraph{Wedge product.}

For scalar-valued differential forms on a smooth manifold $M$ there is a well-defined product on the space of differential forms: given $\alpha\!\in\! \Omega^k(M)$ and $\beta\!\in\! \Omega^\ell(M)$, the $(k\!+\!\ell)-$form $\alpha\wedge\beta\!\in\!\Omega^{k+\ell}(M)$ is the antisymmetrization of the tensor product of the two forms, i.e., for all $X_1,\ldots,X_{k+\ell}\in \Gamma(TM)$,
\begin{align*}
	\alpha\wedge \beta(X_1,\ldots,X_{k+\ell}) &= \frac{(k+\ell)!}{k!\ell !}\mathrm{Alt}(\alpha\otimes\beta)(X_0,\ldots,X_{k+\ell})\\
	&= \frac{1}{k!\ell!}\sum_{\sigma\in S_{k+\ell}}\mathrm{sgn}(\sigma)\alpha(X_{\sigma(0)},\ldots,X_{\sigma(k)})  \beta(X_{\sigma(k+1)},\ldots,x_{\sigma(k+\ell+1)}),
\end{align*}
where $S_{k+\ell}$ denotes the permutation of $(k\!+\!\ell)$ indices.

This notion can be similarly extended to forms that take values in vector bundles: the wedge product $\wedge$ between bundle-valued forms and scalar-valued forms can be defined as:
$$\wedge\colon \Omega^k(M,E)\times \Omega^\ell (M)\to \Omega^{k+\ell}(M,E)\colon (\alpha,\beta)\mapsto \alpha\wedge\beta = \frac{(k+\ell)!}{k!\ell !} \mathrm{Alt}(\alpha\otimes\beta), $$
where the tensor product applied to $(k\!+\!\ell)$ vectors $X_1,\ldots,X_{k+\ell}\!\in\! \Gamma (TM)$ is 
$$\alpha\otimes\beta(X_1,\ldots,X_{k+\ell}) = \alpha(X_1,\ldots,X_k) \beta(X_{k+1},\ldots,X_{k+\ell}).$$

\paragraph{Exterior covariant derivative.}
The exterior covariant derivative is a natural extension of the standard exterior derivative of scalar-valued forms to forms that take values in a vector bundle. In particular, given a connection $\nabla$ on $\pi:E\!\rightarrow\! M$, the exterior covariant derivative $d^\nabla: \Omega^k(M,E) \rightarrow \Omega^{k+1}(M,E)$ obeys
\begin{equation}\label{eq:ext-cov-derivative}
	d^\nabla (\alpha \otimes s) = d \alpha \otimes s + (-1)^k \alpha \wedge \nabla s, \quad \forall \alpha \in \Omega^k(M), \forall s\in \Gamma(E),
\end{equation}
where $\otimes$ denotes the tensor product of differential forms and sections of the bundle $E$.
In local coordinates, Eq.~\eqref{eq:ext-cov-derivative} is expressed as
\begin{equation}\label{eq:ext-cov-derivative-local}
	(d^\nabla \gamma)^a = d\gamma^a + \omega^a_b \wedge \gamma^b,
\end{equation}
where $\gamma \!=\! f_a \gamma^a  \in \Omega^k(U,E)$ in a given local frame field $\{f_a\}$ of $E$, while the associated local connection $1-$form for this frame field has components $\omega^a_b$.

Note that given a smooth map $\Psi\colon M\to N$ between two manifolds, and a vector bundle $ \pi : E \rightarrow N$ with connection $\nabla$, the exterior covariant derivative commutes with the pullback: given a form $\alpha\in \Omega^k(N,E)$ one has
\begin{equation}
	\label{eq:pullback-commutes-with-d-nabla-smooth}
	d^{\Psi^{\ast}\nabla}(\Psi^\ast\alpha) = \Psi^\ast(d^\nabla\alpha).
\end{equation}
Additionally, given a bundle-valued form $\alpha\!\in\! \Omega^k(M,E)$ and a scalar-valued form $\lambda\!\in\! \Omega^\ell(M),$ the \emph{Leibniz rule} between the wedge product and the exterior covariant derivative holds, i.e.,
$$d^\nabla(\alpha\wedge\lambda) = {\rm d}^\nabla\alpha\wedge\lambda + (-1)^k \alpha\wedge d\lambda.$$
For a proof of these last two properties, see~\cite{marsh2018mathematics} for instance.

\paragraph{Curvature two-form and Bianchi identities.}

Connections are geometric structures that capture the idea of parallel transport along curves, and are an essential tool in the study of fiber bundles. One important geometric invariant of connections is the \emph{curvature $2-$form} $\Omega^\nabla\!\!\in\!\Omega^2(M,\mathrm{End}(E))$, which measures the failure of a connection to be flat, i.e., the impossibility of endowing the base manifold with a Euclidean structure. This skew-symmetric tensor is useful in geometry and physics in areas such as general relativity, gauge theory, and topological quantum field theory. It quantifies the noncommutativity of parallel transport around closed loops and encodes the infinitesimal holonomy of the connection, also known as Ambrose-Singer theorem \cite{amrose_singer}.
The \textit{curvature $2-$form} is defined as 
\[
\Omega ^ \nabla (X,Y)s:= d^ \nabla ( d^ \nabla s)(X,Y), \quad s \in \Gamma (E), \; X,Y \in \Gamma (T M).
\]

If we are given a local frame field, it can be shown that we have the local formula
\begin{equation}
	\label{eq:Omega-local}
	\Omega ^a_b= d \omega ^a_b + \omega ^a_c \wedge \omega ^c_b = (d\omega + \omega\wedge\omega)^a_{b}.
\end{equation}
This local formula is also sometimes written abusively as $\Omega^ \nabla\!=\!d^ \nabla \omega$, to be understood as applying the exterior covariant derivative for vector-valued forms to the columns of the connection $1-$form $\omega$ in the local frame. 
One can then show that the following general equation holds:
\begin{equation}
	\label{eq:algBianchi}
	d ^ \nabla d ^ \nabla \alpha = \Omega^\nabla( \cdot , \cdot ) \wedge \alpha ,\quad \forall  \alpha \in \Omega ^k (M, E),
\end{equation}
a property proving that unlike the exterior derivative ${\rm d}$ of scalar-valued differential forms, the operator ${\rm d}^\nabla$ is \emph{not} nilpotent in general. Note that this property is often referred to as the \emph{algebraic Bianchi identity}.

Another important property is what is often referred to as the \emph{differential Bianchi identity}, a consequence of the Leibniz rule and antisymmetry of the exterior covariant derivative: it states that 
\begin{equation}
	d^{\nabla^{\rm End}} \Omega^\nabla=0,
	\label{eq:diffBianchi}
\end{equation}
where $\nabla^{\rm End}$ is the induced connection on the endomorphism bundle $\operatorname{End}(E) \rightarrow M$.
More generally it can be shown that for the induced connection on the endomorphism bundle, Eq. ~\eqref{eq:algBianchi} can be expressed as 
\begin{equation}
	\label{eq:algBianchi-endom}
	d^{\nabla^{\rm End}} d^{\nabla^{\rm End}} \beta = [\Omega^\nabla \wedge \beta]  ,\quad \forall  \beta \in \Omega ^k (M, \mathrm{End}(E)),
\end{equation}
where $[\cdot\wedge\cdot]$ is the commutator using the wedge product.

Another usual identity linking curvature tensor and covariant derivatives is that, given a connection $\nabla$ on a vector bundle $\pi: E\!\rightarrow\! M$ and for any 0-form $s\!\in\! \Gamma(E)$, one has:
\begin{equation}
	\Omega^\nabla(X,Y)s = \nabla_X\nabla_Y s - \nabla_Y\nabla_X s - \nabla_{[X,Y]} s, \quad \forall X,Y\in \Gamma(TM).
	\label{eq:curv_tensor}
\end{equation}

The curvature $2-$form and its two associated Bianchi identities are central geometric notions in the study of connections as they are independent of the choice of local trivializations of the associated vector bundle $E$. Consequently, we will ensure that our discretization respects these important properties.

\begin{remark}
	\label{rem:curvature_gauge_invariant}
	The curvature is \emph{gauge invariant}. This means that given two local frame fields $\{f_a\}$ and $\{\tilde f_a\}$, related via a matrix of change of bases as $f_a= \tilde f_b A^b_a$ (see Eq.~\eqref{change_basis}) with associated local connection $1-$form $\omega$ and $\tilde{ \omega }$, then the components of the matrix-valued curvature in both frame are related as \[
	\widetilde{\Omega}^\nabla =d\tilde{\omega} + \tilde{\omega}\wedge\tilde{\omega} = A (d\omega + \omega\wedge\omega) A^{-1} = A \Omega ^ \nabla A ^{-1}.
	\]
	That is, a change of basis in the fiber only amounts to a change of basis for the representation matrix of the curvature endomorphism, see \cite[Eq. (1.29), page 108]{chern1999lectures}.
\end{remark}

\paragraph{Riemannian connections.} 
Given a smooth manifold $M$, we call $M$ a \emph{Riemannian manifold} if the tangent bundle possesses a Riemannian metric, i.e., a section $g\!\in\! \Gamma(\smash{\mathrm{Sym}^2(TM)})$ such that 
$g_x:T_xM \times T_xM \rightarrow \mathbb{R}$ is a scalar product for each fiber space.
A connection $\nabla$ on the tangent bundle is said to be compatible with the metric if it preserves the metric structure, which is a natural condition in Riemannian geometry. In other words, $\nabla$ is compatible with the metric if the covariant derivative of the metric with respect to a vector field $X$ can be expressed in terms of the metric and $\nabla$ as

\[ d\big(g(s_1,s_2)\big)(X) = g(\nabla_X s_1,s_2) + g(s_1,\nabla_X s_2) \quad \forall s_1,s_2 \in \Gamma (TM), X \in \Gamma (TM).\]

If the connection is compatible with the metric, for any two vectors $v,w \in T_xM$, the inner product $g(R_t v, R_t w)$ is preserved along parallel transport, where $R_t$ denotes the parallel transport map associated with the connection $\nabla$. In a local orthonormal frame, this implies that the local expressions of $R_t$ belong to the special orthogonal group $SO(n)$. This compatibility of the connection with the metric structure is a fundamental requirement in Riemannian geometry: it plays a major role in the definition of the Levi-Civita connection, which is the unique connection that is both compatible with the metric \emph{and} torsion-free --- see the next paragraph for a definition of torsion.

\paragraph{Solder form and torsion.}

For the special case where $E\!=\! TM,$ the tautological $1-$form $\theta$, or \emph{solder form}, is a bundle-valued $1-$form defined as $\theta(X) \!=\! X$ for any vector field $X$ on $M$. For a connection $\nabla$, the \emph{torsion $2-$form} is derived from the solder form through 
\begin{equation}
	\Theta^\nabla = d^\nabla\theta = d\theta + \omega\wedge\theta\ \in \Omega^2(M,TM),
	\label{eq:smooth_torsion_exterior_derivative}
\end{equation}
where the last equality holds only locally.
While the curvature $2-$form $\Omega^\nabla$ measures the deviation of the covariant derivative from the exterior derivative, the torsion $2-$form $\Theta$, on the other hand,  measures the deviation of the covariant derivative from the Lie derivative, meaning that it measures the failure of the connection to preserve the Lie bracket of vector fields under parallel transport. Specifically, given a connection $\nabla$, the torsion $2-$form $\Theta$ is also expressed through
$$\Theta^\nabla(X,Y) = d^\nabla\theta(X,Y) = \nabla_X Y - \nabla_Y X - [X,Y]$$
for vector fields $X$ and $Y$ on $M$.

The \emph{algebraic Bianchi identity} often refers to the differential identity from Eq.~\eqref{eq:algBianchi} specifically applied to the solder form; it relates the exterior derivative of the torsion $2-$form to the curvature $2-$form and the solder form through
\begin{equation}
	\label{eq:algebraic-bianchi-identity}
	d^\nabla\Theta^\nabla = d^\nabla d^\nabla\theta = \Omega^\nabla\wedge\theta.
\end{equation}

\begin{remark}[Bianchi Identities in (Pseudo) Riemannian Geometry]
	Suppose we consider the special case of a Riemannian manifold and $E \!=\! TM$. Let $\nabla$ be the Levi-Civita connection, i.e., the unique torsion free connection compatible with the metric. In this case we usually write $ \Omega ^ \nabla (X,Y)Z= R(X,Y)Z$, with $R$ the Riemann curvature $(1,3)$-tensor, i.e., $R\!\in\! TM^\ast\!\otimes\! TM^\ast\!\otimes\! TM^\ast\!\otimes\! TM$. We have
	\[
	{\rm d}^{\nabla^{\rm End}} \Omega^\nabla=0 \;\Leftrightarrow\; \nabla _XR(Y,Z)+ \nabla _YR(Z,X)+ \nabla _ZR(X,Y)=0
	\]
	and
	\[
	0 = d^\nabla d^\nabla\theta = \Omega^\nabla\wedge\theta  \;\Leftrightarrow \;R(X,Y)Z+R(Y,Z)X+R(Z,X)Y=0,
	\]
	
	which are the familiar Bianchi identities as formuluated in (pseudo)-Riemannian geometry. A common form found in literature uses tensor notation. Let $(\partial_1,\ldots,\partial_n)$ be a local orthonormal frame field of $TM$ consisting of vector fields for which the Lie bracket vanishes. Let $(dx^1,\ldots,dx^n)$ the associated dual frame. Using the Einstein sum convention the coordinates of the Riemannian curvature tensor can be expressed as the components of the curvature $2-$form as
	$$[\,\Omega^\nabla]^{\mu}_{\nu} = R^{\mu}_{\nu,\alpha,\beta} dx^\alpha\wedge dx^\beta,$$
	where 
	$$ R^{\mu}_{\nu,\alpha,\beta}= dx^\mu\left( R( \partial _ \alpha , \partial _ \beta ) \partial _\nu \right).$$
	The algebraic Bianchi identity then reads
	$$0 =R^\mu_{\nu \alpha\beta}+R^{\mu}_{\alpha\beta\nu}+ R^\mu_{\beta\nu\alpha}.$$
	As for the endomorphism-valued case, the definition for the covariant derivative of a tensor field this time becomes: 
	$$R^\mu_{\nu\alpha\beta;\lambda} = {\partial_{\lambda} R^{\mu}_{\nu\alpha\beta}} +  R^{\kappa}_{\nu\alpha\beta}\omega^{\mu}_{\lambda\kappa} - R^{\mu}_{\kappa\alpha\beta}\omega^{\kappa}_{\lambda\nu} -R^{\mu}_{\nu\kappa\beta}\omega^{\kappa}_{\lambda\alpha}-R^{\mu}_{\nu\alpha\kappa}\omega^{\kappa}_{\lambda\beta}.$$
	Using this definition of covariant tensor derivative, the differential Bianchi identity for the Levi-Civita connection reads
	$$0 = R^{\mu}_{\nu\alpha\beta;\lambda}+ R^\mu_{\nu\lambda\alpha;\beta}+R^\mu_{\nu\beta\lambda;\alpha}. $$
	For a detailed derivation of these formulas from Eqs.~\eqref{eq:diffBianchi} and \eqref{eq:algebraic-bianchi-identity}, see~\cite{frankel_2011} p.298-301.

\end{remark}

	\section{Discrete Connections and Bundle-valued Differential Forms}

We now discuss our discretization of bundle-valued forms and connections, for which related operators will be derived in the next section. Throughout this section and for the remainder of this paper, we consider an arbitrary discrete manifold $M$ represented by a \emph{simplicial complex} embedded in $\mathbb{R}^n$. We denote its vertices by $\mathcal{V}=\{v_i\}$, its edges by $\mathcal{E}=\{e_{ij}\}$ (where $e_{ij}$ is the oriented edge between $v_i$ and $v_j$), its faces by $\mathcal{F}=\{f_{ijk}\}$ (whose boundaries consist of oriented edges), its 3D cells by $\mathcal{C}=\{c_{ijkl}\}$, etc. More generally, we will also refer to the set of all $k-$simplices (simplices made out of $(k+1)$ vertices) of $M$ as $\smash{\mathcal{M}^k}$. For simplicity of exposition, we will assume that the manifold is \emph{oriented}, which is a negligible constraint given the local nature of this theory.

\subsection{Discrete Vector Bundles, Frame Bundles, and Connections}
In our discrete setup, a number of discrete notions of continuous definitions are rather natural to define as a collection of discrete objects on $M$, representing a discretization of their respective continuous notions. While these definitions alone are not sufficient (in particular, no notion of bundle topology will be defined yet), we will complete the discrete picture in the upcoming sections.

\begin{definition}[Discrete Vector Bundle]
	A discrete vector bundle of rank $r$ over a discrete orientable manifold $M$ is simply defined as a collection of vector spaces $\{\mathbf{E}_{v_i}\}_{v_i \in \mathcal{V}}\!\coloneqq\!\mathbf{E}(M) $ (i.e., one vector space per vertex), with $\mathrm{dim}(\mathbf{E}_{v_i})\!=\!r$. 
\end{definition}

We can then equip each of these vertex-based vector spaces with a frame to form a \emph{discrete analog of a local section of the frame bundle} over $M$.
\begin{definition}[Section of Discrete Frame Bundle]
	A section of the \emph{discrete frame bundle} of the rank-$r$ vector bundle $\mathbf{E}(M)$ consists in a collection of frames $\{\mathbf{F}_{v_i}\}_{v_i\in \mathcal{V}}\!\coloneqq\!\mathbf{F}(M)$ defining an arbitrary choice of frame for each vector space $\mathbf{E}_{v_i}$.
\end{definition}

We can now define a \textit{discrete connection} $\boldsymbol{\nabla}$ as a collection of maps between the extremities of each edge of $M$.

\begin{definition}[Discrete Connection]
	A \textit{discrete connection} $\boldsymbol{\nabla}$ is defined as an assignment, for each edge $e_{ij}$ of the discrete orientable manifold $M$,  of an invertible linear map $\mathcal{R}_{ij}\colon \mathbf{E}_{v_j}\to \mathbf{E}_{v_i}$, with $\mathcal{R}_{ij} \circ \mathcal{R}_{ji} \!=\! \mathbf{Id}_{v_i}$ representing the parallel transport induced by a continuous connection $\nabla$ along $e_{ij}$. 
\end{definition}

These maps $\{\mathcal{R}_{ij}\}_{e_{ij}\in \mathcal{E}}$ can be thought of as discrete parallel transport maps along edges, where the continuous definition from Eq.~\eqref{local_R} is integrated along an edge to define a map between two adjacent vertices. 
In practice, one can express each linear map $\mathcal{R}_{ij}$ of a connection through a matrix $R_{ij}\!\in\! GL(r)$, stored on vertex $v_i$, using a discrete frame field $\mathbf{F}(M)$. Note that a number of properties in the continuous case apply to these discrete equivalents. For instance, if each of the fibers $\mathbf{E}_{v_i}$ possesses an inner product $\langle\cdot,\cdot\rangle\smash{_{\mathbf{E}_{v_i}}}$, we say that the discrete connection is \textit{compatible with the metric} if the maps $\smash{\mathcal{R} _{ij}:  ( \mathbf{E}_{v_j} , \left\langle \cdot , \cdot \right\rangle _{v_j} ) \rightarrow ( \mathbf{E}_{v_i} , \left\langle \cdot , \cdot \right\rangle _{v_i} )}$ are orthonormal.

When each frame $\mathbf{F}_{v_i}$ is orthonormal, this implies $R_{ij}\!\in\! \mathrm{SO}(r)$: a frame at a vertex is naturally parallel-transported along an edge through a pure rotation.

	\begin{figure}[tb]
		\centering
        \includegraphics[width=\linewidth]{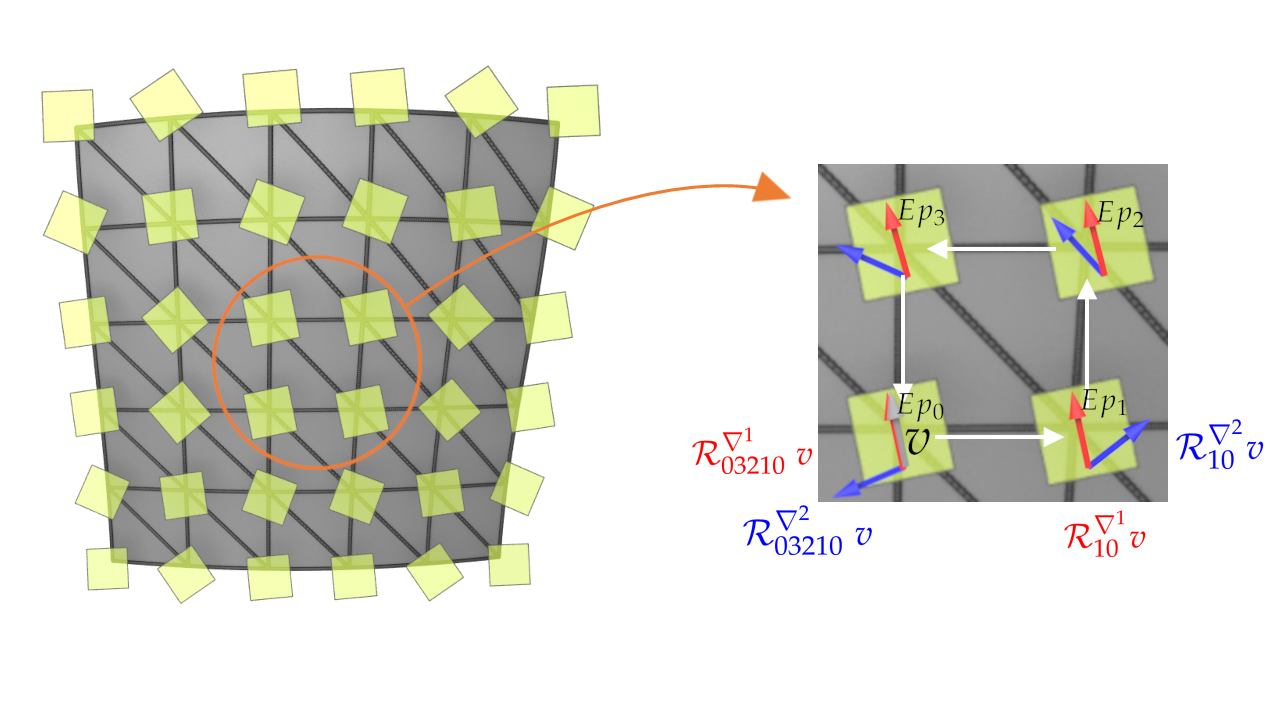}
		\vspace*{-6mm}
        \caption{\textbf{Discrete Connections.} A discrete vector bundle $\boldsymbol{E}$ assigns a vector space to each vertex on a mesh. To compare quantities in different fibers, we need to encode how the individual fibers are \textit{connected}. A connection consists of a collection of linear maps between neighboring fibers. In the figure, each box represents a frame for each fiber space over a vertex. The right part of the figure visualizes the local action of two different discrete connections. Starting with an initial vector \( v \in \boldsymbol{E}_{p_0} \) (gray), we illustrate the different parallel transports induced by two different connections, \(\nabla^1\) and \(\nabla^2\). Different connections on the same vector bundle result in distinct notions of parallel transport and curvature.}
		\label{fig:parallel_transporting}
	\end{figure}
    
	Given two discrete manifolds ${M,N}$ and a discrete vector bundle $\mathbf{E}$ over $N$, with a discrete connection $\boldsymbol{\nabla}$, we can finally define the notion of \emph{discrete pullback bundle} as follows:  if  $f\colon M\to N$ is a simplicial map associating simplices in $M$ to simplices in $N$,  the pullback bundle $f^\ast \mathbf{E}$ over $M$ is naturally defined as $(f^\ast \mathbf{E})_{v_i} \!=\! \mathbf{E}_{f(v_i)}$, $\forall v_i\!\in\!\mathcal{V}.$
	We then follow \cite{Hirani_Bianchi} for the definition of the \emph{pullback connection}.
	
	\begin{definition}[Pullback Connection~\cite{Hirani_Bianchi}]
		Let $f\colon M\to N$ be a simplicial map and $\mathbf{E}$ a discrete vector bundle over $N$ with discrete connection $\boldsymbol{\nabla}$. A pullback connection $f^\ast\boldsymbol{\nabla}$ is defined as a collection of linear maps $\{\mathcal{R}_{ij}\colon \mathbf{E}_{f(j)}\to \mathbf{E}_{f(i)}\}_{e_{ij}\in\mathcal{E}(M)}$ with:
		$$\mathcal{R}_{ij} := 
		\begin{cases*}
			\mathcal{R}_{f(v_i)f(v_j)} &  if $e_{f(v_i),f(v_j)}\in \mathcal{E}({N})$\\
			\mathrm{Id}_{\mathbf{E}_{f(v_i)}} = \mathrm{Id}_{\mathbf{E}_{f(v_j)}}& otherwise, $f(v_i)=f(v_j)$.
		\end{cases*}$$
	\end{definition}
	
	\subsection{Discrete Bundle-Valued Differential Forms}
	\label{sec:discrete_bundle_valued_forms}
	Given a discrete vector bundle $\bE$ over a discrete manifold $M$, we now define the notion of \emph{discrete bundle-valued $\ell-$form} as, for now, a collection of abstract maps. How these maps relate to the continuous case will be elucidated later (see Sec.~\ref{sec:DBEC}) in order to guarantee proper convergence to continuous forms for increasing mesh resolution. 
	
	\begin{definition}[Discrete (1,0)-tensor-valued $\ell-$form]
		\label{def:discreteForm}
		A discrete vector-valued $\ell-$form $\boldsymbol{\alpha}$ on $M$ is a collection of maps which, for each $\ell-$simplex $\sigma$ and one of its vertices $v$, returns a vector in $\mathbf{E}_{v}$, i.e., 
		\begin{equation}
			\label{eq:discrete_k_f}
			\boldsymbol{\alpha}\colon \sigma \in \mathcal{M}^\ell,v\in \sigma \subset \mathcal{V}(M) \mapsto  \boldsymbol{\alpha} ( \sigma , v) \in \mathbf{E}_{v},
		\end{equation}
		such that if $\bar{\sigma}$ is the simplex $\sigma$ with reversed orientation, one has
			$\boldsymbol{\alpha}(\bar{\sigma},v) = - \boldsymbol{\alpha}({\sigma},v) $
			for all \(v\!\in \!\sigma \).
	\end{definition}

	Note that this definition means that in contrast to the discretization of scalar-valued forms in FEEC or DEC, discrete bundle-valued differential forms are \emph{not} to be understood as a linear space of cochains, but rather as \emph{maps for simplex-vertex pairs}: bundle-valued $\ell-$forms should be regarded as abstract ``sided'' maps (as they are attached to a particular vertex of the simplex) for now. Moreover, we will assume \emph{for now} that a discrete bundle-valued $\ell-$form is defined through its values on all simplex-vertex pairs $(\sigma, v)$ for $\sigma$ is an $\ell-$simplex and $v$ is one of its vertices: later on (see Sec~\ref{sec:revisitingDisc}), once we precisely define how a discrete bundle-valued form is sampled from a continuous form, we will restrict this definition to only \textbf{one vector value per $\ell-$simplex}, like in existing finite-dimensional (scalar-valued) exterior calculus frameworks.

	We also define the notion of \emph{discrete $(1,1)-$tensor-valued $\ell-$forms} (or \emph{discrete homomorphism-valued} $\ell-$forms).
	Given a vector bundle $ \pi :E \!\rightarrow\! M$ with connection $ \nabla $, we consider the bundle $\mathrm{End}(E) \rightarrow M$ with induced connection $\nabla^{\mathrm{End}}$. One can equivalently describe it as the bundle of $(1,1)-$tensors of $E$. This latter description allows for a more flexible representation, preserving the underlying structure of endomorphism forms by considering separate evaluation and input fibers for each cell.

			\begin{definition}[Discrete $(1,1)-$tensor-valued $\ell-$form]
				
				\label{def:discrete-1-1-tensor-valued-forms} A discrete $(1,1)-$tensor-valued $\ell-$form $\boldsymbol{\beta}$ on $M$ is a collection of maps which, for each $\ell-$simplex $\sigma$ and two of its vertices ($w$, the input [or 'cut'] fiber, and $v$, the output (or \emph{evaluation}) fiber), returns a homomorphism between $\mathbf{E}_{v}$ and $\mathbf{E}_{w}$, i.e., 
				\begin{equation}
					\label{eq:discrete_k_f2}
					\boldsymbol{\beta}\colon \sigma \in \mathcal{M}^\ell, v\in \sigma, w\in \sigma \mapsto  \boldsymbol{\beta} ( \sigma , v, w) \in  \operatorname{Hom}  (\bE_w,\bE_v),
				\end{equation}
				such that if $\bar{\sigma}$ is the simplex $\sigma$ with reversed orientation, one has
					$\boldsymbol{\beta}(\bar{\sigma},v,w) \!=\! \!- \boldsymbol{\beta}({\sigma},v,w)$
					for all \(v,w \!\in\! \sigma \).
			\end{definition}
			
			\begin{remark} 
				The discrete curvature $ \boldsymbol{\Omega}^\nabla $ defined in Def.~\eqref{def:discrete-curvature} with evaluation and cut fibers at two different vertices is an example of a discrete endomorphism-valued $2-$form. Note that we could trivially extend these previous definitions to $(p,q)-$tensor-valued discrete forms using $p$ input vertices and $q$ output vertices; but we will not explore this extension in this paper.
			\end{remark}
			
			Following~\cite{Hirani_Bianchi} we can finally define the notion of \emph{pullback of a discrete bundle-valued form} as follows:
			\begin{definition}[Pullback of a Discrete Vector-Valued Form~\cite{Hirani_Bianchi}]
				Given a simplicial map $f\colon M\to N$ and a discrete vector bundle $\bE$ over $N$, let $\boldsymbol{\alpha}$ be an $\bE-$valued discrete $\ell-$form. We define the $f^\ast \bE-$valued pullback form $f^\ast\boldsymbol{\alpha}$ on $M$ as 
				$$f^\ast\boldsymbol{\alpha}(\underbrace{s\!\coloneqq\![v_0,\ldots,v_\ell]}_\text{$\in \mathcal{M}^\ell$},{v_0}) = \begin{cases}
					\boldsymbol{\alpha}([f(v_0),\ldots,f(v_\ell)],{f(v_0)}) &\text{if } [f(v_0),\ldots,f(v_\ell)]\in \mathcal{N}^\ell,\\
					0& \text{otherwise.}
				\end{cases}$$  		
			\end{definition}
			A similar definition for \emph{discrete endomorphism-/homomorphism-valued forms} holds trivially.
			
			\subsection{Discrete Connection One-Form}
			\label{sec:discrete1Form}
			Recall that in the smooth theory, once we are given a trivialization of the smooth vector bundle $E\!\to\! M$, we can treat the fibers as vector spaces of rank $r$  (say, copies of $\R^r$). The local connection $1-$form $\omega$ associated with a given connection $\nabla$ is then defined as $\nabla s^a\!=\! d s^a+ \omega^a_b s^b$, where $\omega$ measures how much the connection deviates from the exterior derivative $d$ (applied to coordinates); so
			if $\omega \!=\! 0$, i.e., $\nabla \!=\! d$ in a given frame field, then the matrix representation of parallel transport in this frame field satisfies $R_{ij} \!=\! \mathrm{Id}_{\R^r}$.
			
			We thus formulate the definition of a \emph{discrete connection $1-$form} as follows:
			
			\begin{definition}
				Let $M$ be a discrete manifold with discrete vector bundle $\bE(M)$ of rank $r$ and let $\mathbf{F}(M)$ be a section of the discrete frame bundle. For a given discrete connection $\boldsymbol{\nabla}$ defined by a set of edge matrices $R_{ij}$ when expressed in $\mathbf{F}$, we define its associated discrete local connection $1-$form $\boldsymbol{\omega}$ as the collection of $r\!\times\!r$ matrices $\boldsymbol{\omega}_{ij}$ (one per edge $e_{ij}\!\in\! M$) with\vspace*{-1mm}
				\begin{equation} \label{eq:discOneForm}
					\boldsymbol{\omega}_{ij} \coloneqq  R_{ij} - \mathrm{Id}_{v_i} \in \R^{r\times r}.\vspace*{-1mm}
				\end{equation}
			\end{definition}
			
			Note that a discrete local connection $1-$form thus also measures how much the discrete connection-induced parallel transport locally deviates from the identity, mimicking the continuous definition $\nabla s^a\!=\! ds^a + \omega^a_bs^b$. And just like in the smooth case, the realizations of the per-coordinate exterior derivative and the connection $1-$form for a given connection change depending on the chosen frame field $\mathbf{F}$. We will, in fact, exploit this property in our work.
			
			\begin{remark}
				\label{rem:omega-or-R-exact}
				We can also regard Eq.~\eqref{eq:discOneForm} as a linear approximation of the smooth case: if each parallel-transport edge matrix $R_{ij}$ comes from the integration of a continuous $1-$form $\omega$ over the edge $e_{ij}$, then it is given by $R_{ij}\!=\!P\exp\smash{\bigl(\int_{e_{ij}} \omega\bigr)}$.
				The first-order Taylor expansion of this path-ordered matrix exponential is $\smash{\mathrm{Id}_{v_i} + \int_{e_{ij}} }\omega$,
				ensuring consistency with the continuous case. 
				One could instead define $\boldsymbol{\omega}_{ij}$ as the integrated value of the continuous connection $1-$form $\omega$, and then linearize the exponential map by setting $R_{ij} \!\coloneqq\! \mathrm{Id}_{v_i}+ \boldsymbol{\omega}_{ij}$; but given the central role of parallel transport in our work, we find this second way to discretize connection and parallel transport less appealing.
			\end{remark}

			\begin{remark}
				\label{rem:disc-has-to-stay-local}
				In the context of this discretization approach, we presume the continuity of the underlying local frame field. However, this assumption encounters limitations due to the globally assigned frame per vertex. Notably, the applicability of a continuous local frame encounters constraints, as illustrated by the hairy ball theorem, which prohibits the existence of a global frame for the tangent bundle on a sphere. Extending this notion to bundles with potentially nontrivial characteristic classes, even for a seemingly straightforward base manifold like a tetrahedron, it may become infeasible to employ a single bundle chart to parametrize the bundle on the cell. Our work specifically focuses on scenarios where \emph{a local setup} suffices, and a single chart is adequate to parametrize the bundle. In this case, we point out that the parametrization of the bundle is independent of the chart. However, in cases involving multiple charts, a need arises for compatibility conditions between them to meet global topology constraints. Further exploration of these conditions remains a direction for future research.
			\end{remark}

			\subsection{Discrete Curvature Two-Form}
			\label{sec:discreteCurv2Form}
			
			Given a discrete connection $\boldsymbol{\nabla}$, we now design a notion of \emph{discrete connection curvature} akin to the continuous definition of $\Omega^\nabla$. 
			Since the Ambrose-Singer theorem links the curvature of the connection to the holonomy of an infinitesimal loop, it is tempting to define the discrete curvature $2-$form $\boldsymbol{\Omega}^{\boldsymbol{\nabla}}$ on a triangle $\sigma\!=\![v_0, v_1, v_2]$ on, say, fiber $\bE_{v_0}$ through the difference between the integrated connection $1-$form over the triangle edges and the identity at the evaluation fiber via \vspace*{-2mm}
			\begin{equation}
				\boldsymbol{\Omega^\nabla}(\sigma,v_0) \coloneqq \mathcal{R}_{01}\cdot\mathcal{R}_{12}\cdot\mathcal{R}_{20} - \mathrm{Id}_0\in \mathrm{End}(\bE_{v_0}). \vspace*{-2mm} \label{eq:simpleCurvature}
			\end{equation}
			However, Eq.~\eqref{eq:simpleCurvature} is not a constructive definition: one cannot meaningfully \emph{sum} this definition over two triangles, even if they share the same evaluation vertex $v_i$. In other words, since this curvature is associated with a loop instead of a simple chain, we cannot use the summability of chains to induce a notion of curvature integral over a larger region. We need to leverage the underlying algebraic structure of chains instead.
			
			Inspired by synthetic geometry for infinitesimal cells~\cite{Kock1997} and the definition proposed in \cite{schubel2018discretization,Hirani_Bianchi},
			we propose not to use parallel transport along triangle boundary loops, but instead, to \emph{compare parallel transport along two $1-$chains whose difference forms a triangle boundary loop}. This change, which amounts to comparing the integration of the connection form over two $1-$chains --- and each time parallel-transporting the result back to fiber $\bE_{v_0}$ --- may seem tautological, but it will allow us to define a notion of addition of the curvature $2-$form over two $2-$chains sharing a 1-chain. Note that it now casts the curvature two-form on $M$ as a discrete \emph{homomorphism-valued $2-$form}. 
			
			\begin{definition}
				\label{def:discrete-curvature}
				Let $M$ be a discrete orientable manifold equipped with a discrete vector bundle $\bE(M)$ and connection $\boldsymbol{\nabla} \!=\! \{\mathcal{R}_{ij}\}_{e_{ij}\in\mathcal{E}}.$ Let $\sigma = [v_0,v_1,v_2] \in \mathcal{M} ^2$ be a triangle of $M$. 
				For the evaluation fiber $\bE_{v_0}$ at $v_0$, the three expressions for the discrete curvature $2-$form $\smash{\boldsymbol{\Omega^\nabla}}$ depending on whether we use $v_1$, $v_2$, or $v_0$ as the cut vertex (comparing respectively parallel-transports  over the oriented $1-$chains $e_{01}$ vs. $e_{02}e_{21}$, $e_{01}e_{12}$ vs. $e_{02}$, and $e_{01}e_{12}e_{20}$ vs. $\varnothing$, see Fig.~\ref{fig:curvature-different-cuts}) are: \vspace*{-2mm}

				\begin{equation}\label{eq:curv01}
					\boldsymbol{\Omega^\nabla}(\sigma,v_0,v_1) = 
					\mathcal{R}_{01}- \mathcal{R}_{02} \mathcal{R}_{21} \, \in \operatorname{Hom} (\bE_{v_1},\bE_{v_0}), \vspace*{-1mm}
				\end{equation}
				\begin{equation}\label{eq:curv02}
					\boldsymbol{\Omega^\nabla}(\sigma,v_0,v_2) = 
					\mathcal{R}_{01}\mathcal{R}_{12} - \mathcal{R}_{02} \, \in \operatorname{Hom} (\bE_{v_2},\bE_{v_0}),
				\end{equation}
				\begin{equation} \label{eq:curv00}
					\boldsymbol{\Omega^\nabla}(\sigma,v_0,v_0) = 
					\mathcal{R}_{01}\mathcal{R}_{12} \mathcal{R}_{20}- \mathrm{Id}_0 \, \in \operatorname{Hom} (\bE_{v_0},\bE_{v_0}).
				\end{equation}
				\begin{figure}[h]\vspace*{-4mm}
					\centering
					\includegraphics[width =0.8\linewidth]{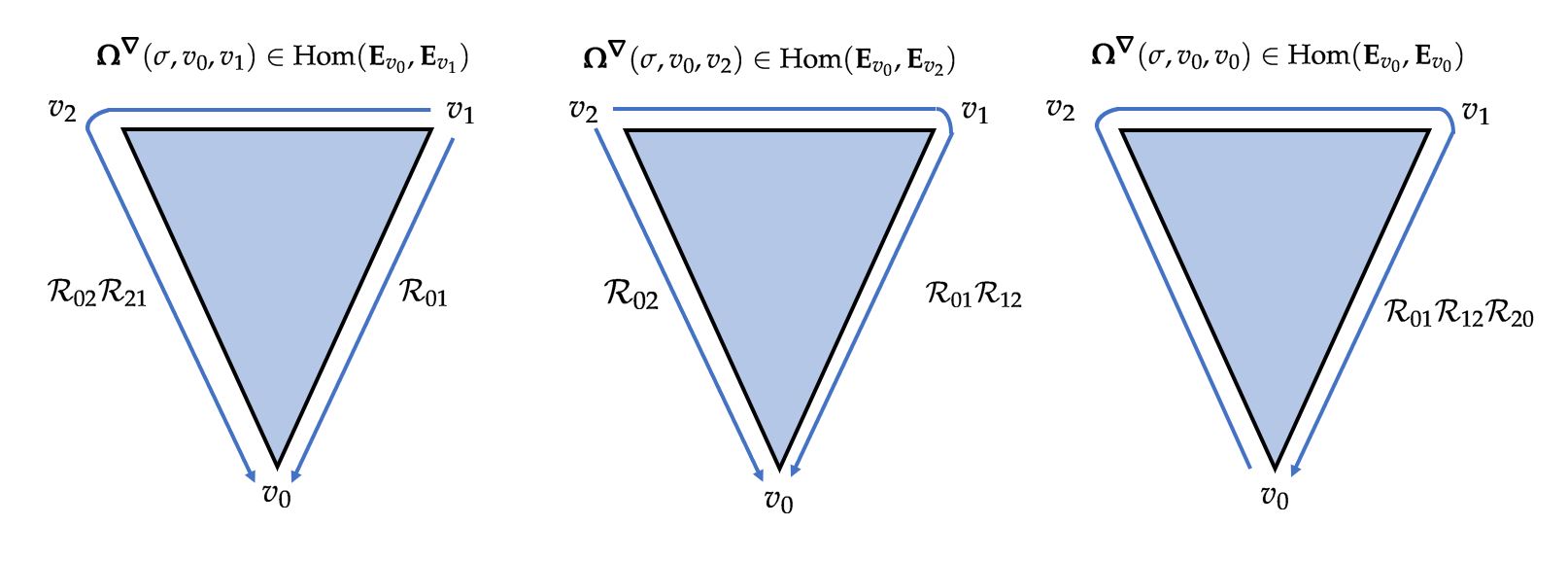}\vspace*{-1mm}
					\caption{\textbf{Curvatures.} Illustration of the discrete curvatures for different cut vertices.}
					\label{fig:curvature-different-cuts}
				\end{figure}
			\end{definition}
			
			From this definition, the meaning of ``\emph{evaluation fiber}'' and ``\emph{cut fiber}'' should become clear: instead of measuring the traditional holonomy of the loop starting at $v_0$ through $\smash{\boldsymbol{\Omega^\nabla}(\sigma,v_0)}$, we instead compare transport along two paths, both from the evaluation fiber to the cut fiber but on opposite sides of the triangle, with the path difference being the original triangle boundary.
			
			\begin{remark}
				\label{rem:discrete-curvature-non-simplicial}
				Given a \emph{non-simplicial} 2-cell with a discrete vector bundle and connection, we can easily extend the definition of the discrete curvature similarly through a difference of parallel transports along two paths following the boundary of the cell from the evaluation vertex to the cut vertex. 
			\end{remark}
			
			Eq.~\eqref{eq:curv00} matches the discrete holonomy expression from Eq.~\eqref{eq:simpleCurvature}. In fact, all three expressions listed in Def.~\ref{def:discrete-curvature} are trivial extensions of the definition in Eq.~\eqref{eq:simpleCurvature}: since $\mathcal{R}_{ij} \circ \mathcal{R}_{ji} \!=\! \operatorname{Id}_{v_i}$, they are all equivalent up to post-multiplication. Similarly, changing the evaluation fiber to another vertex than $v_0$ would simply require a pre-multiplication by the accumulated parallel transport from $v_0$ to the new evaluation fiber. Despite its apparent redundancy, the value of this encoding is in its \emph{summability}: the curvature $2-$form for two adjacent simplices can be summed if their evaluation and cut fibers are the same, resulting in an expression of the curvature on the (now non-simplicial) union still representing the difference in integration of the connection form over two $1-$chains evaluated at the evaluation fiber. 
			Moreover, we will see in the next section that the expressions we propose actually correspond to a particular choice of integration of the curvature $2-$form over $2-$simplices using an ordering induced by a canonical retraction, \emph{putting on a formal footing these seemingly arbitrary expressions}.
	
	\section{Discrete Bundle-valued  Exterior Calculus}
\label{sec:DBEC}
Given the discretization of bundle-valued forms and connections that we reviewed, we are now equipped with all the tools needed to describe our bundle-valued variant of discrete exterior calculus. Along the way, we will point out major differences with the typical discrete calculus of scalar-valued forms due to our need to deal with fiber-based evaluations; similarly, we will point out where a previous attempt at a calculus of bundle-valued forms~\cite{Hirani_Bianchi} fails to provide useful numerical evaluations, despite reproducing discrete Bianchi identities.  
The discretization of vector bundle-valued forms is highly dependent on the specific frame field chosen for the process. Therefore our first objective is to define a notion of canonical frame field that will serve as a tool for the discretization of vector bundle valued differential forms.

\subsection{Approach to discretization: parallel-propagated frame fields}
\label{sec:issue-discretization-discrete-d}
The discrete exterior derivative for scalar-valued forms finds a simple and exact discretization as the coboundary operator in numerical versions of exterior calculus by leveraging Stokes' theorem $\int_S d \cdot = \int_{\partial S} \cdot$, from which ensues a whole discrete de Rham complex~\cite{Bossavit_CEM,disc_differential_forms_modelling,FEEC}. In our bundle-valued case, finding a discrete version of the covariant exterior derivative $d^{\nabla}$ is more difficult:  for a bundle-valued form $\alpha$, one has to define the discrete equivalent of --- and eventually, numerically evaluate --- integrals of the type: \vspace*{-1mm}
\begin{equation}
	\int_{S} d^{\nabla}\alpha = \int_S d\alpha + \int_{S} \omega\wedge\alpha = \int_{\partial S} \alpha + \int_S \omega\wedge\alpha. \label{eq:example} \vspace*{-1mm}
\end{equation}
Note that such integrals of bundle-valued forms necessitate a \emph{chosen frame field} to be well defined, in contrast to the scalar-valued scenario. Moreover, while Stokes' theorem can be partially leveraged as indicated in Eq.~\eqref{eq:example}, the last term of this equation is particularly difficult to handle: it involves the integration of a wedge product which, even for scalar-based forms, endures severe limitations in the discrete realm~\cite{Kotiuga:Limits}. 
In order to deal with these two issues, we define a specific frame field that will enable a consistent framework for the discretization of bundle-valued exterior calculus with convergence guarantees.

\paragraph{Designing gauge fields to simplify evaluations.}\vspace*{-2mm}
Our approach relies on a simple, but key property afforded by gauge transformations: \emph{there always exists a local frame field with respect to which the connection $1-$form is zero at some local point} (see Fig.~\ref{fig:two-frames-one-ppf-one-not}). As described for instance in~\cite[Thm 2.1, page 107]{chern1999lectures}, given a local connection $1-$form $\omega$, one can always construct a local frame field such that the connection $1-$form $\Tilde{\omega}$ after gauge transformation (i.e., expressed in this new frame field) 
vanishes at a given point $c\!\in\! M$. 
Therefore, an approach to discretizing expressions such as Eq.~\eqref{eq:example} --- and thus inducing the notion of a discrete (integrated) exterior covariant derivative --- is to design an ``as-parallel-as-possible'' frame field which will render the connection $1-$form $\omega$ zero at a point in the simplex (we will use this point to be one of its vertices, or sometimes, its barycenter): indeed, assuming the connection $1-$form is Lipschitz continuous, the use of such a frame field will \emph{bound the value of the last term of Eq.~\eqref{eq:example}} by the diameter of the simplex --- thus \emph{guaranteeing its convergence to zero in the limit of mesh refinement}. Consequently, the integral of the covariant exterior derivative of a bundle-valued form over a simplex evaluated in a properly chosen frame field will be expressible, just like in the scalar-valued case, as \emph{boundary integral values} only. This approach, which recovers Stokes' theorem for flat connections, will be key to \emph{ensure correct numerical evaluation} in the limit of mesh refinement as it will allow us to recover actual continuous values of exterior covariant derivatives of bundle-valued forms. 

\begin{figure}[htb] \vspace*{-2mm}
	\centering
	\includegraphics[width=0.95\linewidth]{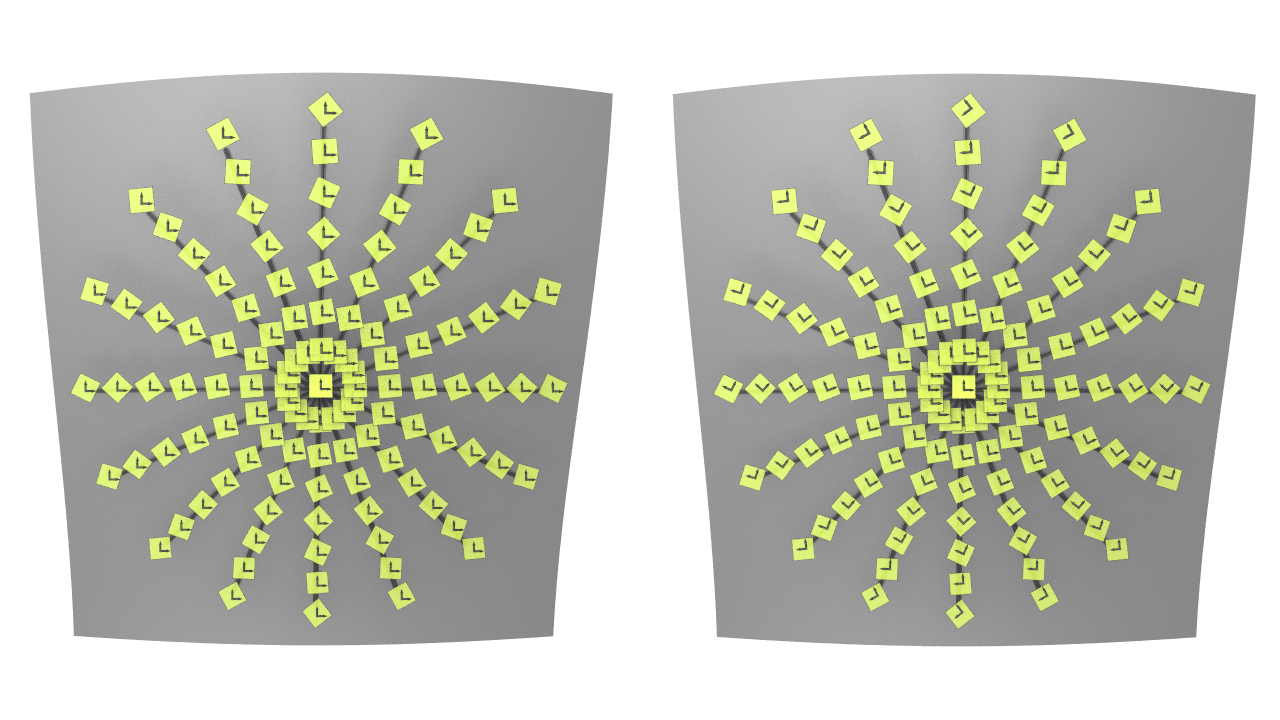} \vspace*{-9mm}
	\caption{We investigate two distinct frame fields on the same vector bundle with an identical connection $(E, \nabla)$. In this context, the boxes represent the individual fibers of the bundle. Both bundles exhibit a similar ``twisted'' structure, whereby the fibers are equally interconnected, implying that the connection (or parallel transport) of the bundle remains consistent.
		With a local frame construction facilitated by a rotation field $R^{\scriptscriptstyle\nabla}$, we can alter both the differential and the connection $1-$form. By employing a parallel-propagated frame field (left), we achieve ${\omega}^{\scriptscriptstyle\nabla}(c) = 0$ at a specific point $c$. In contrast, when using a non-parallel-propagated frame (right), the connection $1-$form does not vanish at $c$.
		Intuitively, the parallel-propagated frame field on the left ``follows'' the bundle along radial lines emanating from $c$. Consequently, $\Tilde{\omega}$ vanishes along these radial lines. This behavior highlights the distinctive properties of the two frame fields in terms of their interaction with the bundle's connection. \vspace*{-2mm}}
	\label{fig:two-frames-one-ppf-one-not}
\end{figure}

\paragraph{Parallel-propagating frame fields.}
\label{sec:ppfIntro}
To explain more concretely how one can design a frame field that bounds the wedge-product term from Eq.~\eqref{eq:example}, let us go back temporarily to the continuous case, where we can formally construct a notion of parallel-propagated frame field and understand its impact on the evaluation of the integrated exterior covariant derivative. 

\begin{definition}[Continuous Parallel-Propagated Frame] 
	\label{def:PPFcontinous}
	Let $ \pi : E \rightarrow M$ be a vector bundle with connection $\nabla$. Let $s=[v_0,\ldots,v_{\ell}]\!\subset\! M$ be a region in $M$ for which there exists a diffeomorphism to an $\ell-$simplex $\sigma\!=\![w_0,\ldots,w_{\ell}]$ where each point $v_i$ are mapped to an associated vertex $w_i$ of $\sigma$. Let $f$ be a local, arbitrary frame field of $E$ over this region $s$. For any given corner $v\!\in\!\{v_0,\ldots,v_{\ell}\}$, we also define a (strong deformation) retraction $\varphi_v:[0,1] \times s \rightarrow s$ derived from a canonical retraction $\varphi_w^\sigma:[0,1] \times \sigma \rightarrow \sigma $ of the simplex $\sigma$ through the aforementioned diffeomorphism, whose retracting paths are radially joining the vertex $w$ associated to point $v$ (see Fig.~\ref{fig:retraction-ppf} for an illustration in 2D); i.e., \vspace*{0mm} 
	\begin{equation}
		\begin{split}
			\varphi_w^\sigma\colon [0,1]\times \sigma\,&\to \sigma\\ (t,p)\,&\mapsto t\, w + (1-t)\, p.\\[-2mm]
		\end{split}
	\end{equation}

			\noindent Moreover, for any point $p\!\in\!\sigma$, we denote by $\mathcal{R}^{\!\scriptscriptstyle\nabla\!,v}(p)\colon E_p\to E_v$ the $\nabla-$induced parallel transport map from $E_v$ to $E_{p}$ along the path induced by the retraction $\varphi_v$ and $R^{\!\scriptscriptstyle\nabla\!,v}\colon \smash{\R^r\to \R^r}$ the matrix field representing $\mathcal{R}^{\!\scriptscriptstyle\nabla\!,v}(.)$ in the coordinate frame $f$.
			
			A frame field $\{f_a^{\scriptscriptstyle\nabla\!,v}\}$ over region $s$  is now called \textbf{parallel-propagated frame field} from $v\!\in\! M$ if 
			\begin{equation}\label{PPF_def} \mathcal{R} ^{\scriptscriptstyle\nabla\!,v}(p) f_a^{\scriptscriptstyle\nabla\!,v}(p)= f_a(v), \quad \text{for all $p \in S$, for all $a=1,\ldots,r$.}
			\end{equation} 
			i.e., if the frame $f_a(v)$ at $v$ has been parallel-transported throughout $s$ via the connection $\nabla$. Furthermore, we call $R^{\scriptscriptstyle\nabla\!,v}$ the \emph{gauge field} of the parallel-propagated frame field from $v\!\in\! M.$
		\end{definition}
		
		With this notion of a parallel-propagated field (denoted PPF subsequently for short), we can now define how a form is expressed in such a PPF field:
		\begin{definition}
			Using the same setting as in Def.~\ref{def:PPFcontinous} we denote the representation $\alpha^{\scriptscriptstyle\nabla\!,v}$ in the PPF from $v$ of an E-valued $\ell-$form $\alpha\!\in\! \Omega^{\ell}(M;E)$ through: \vspace*{-1mm}
			\[
			f_a^{\scriptscriptstyle\nabla\!,v}(\alpha^{\scriptscriptstyle\nabla\!,v})^a= \alpha = f_a\alpha ^a=  \left(f_b^{\scriptscriptstyle\nabla\!,v} (R^{\scriptscriptstyle\nabla\!,v})^b_a\right) \alpha^a,
			\]
			where $\alpha $ denotes at the same time the $\ell-$form and its local expression with respect to the local frame $f$. We have
			\begin{equation}\label{loc_alpha} 
				\alpha ^{\scriptscriptstyle\nabla\!,v} = R^{\scriptscriptstyle\nabla\!,v}\alpha .
			\end{equation} 
			Similarly, we denote the representation of the local connection $1-$form $\omega$ (associated with the connection $\nabla$) in the PPF from $v$ undergoing the gauge transformation by \vspace*{-1mm} 
			\[\omega^{\scriptscriptstyle\nabla\!,v} = R^{\scriptscriptstyle\nabla\!,v}(\omega-(R^{\scriptscriptstyle\nabla\!,v})^{-1}dR^{\scriptscriptstyle\nabla\!,v})(R^{\scriptscriptstyle\nabla\!,v})^{-1},\vspace*{-1mm}\] 
			see Eq.~\eqref{connection_trans}. 
		\end{definition}

		\begin{remark}\label{identification} Let us note that a connection-dependent integral can be written in terms of the frames as
			\[
			\connintidx{\varphi_v}_s \alpha=\int_s \mathcal{R} ^{\scriptscriptstyle\nabla\!,v}\alpha = \int_s \mathcal{R} ^{\scriptscriptstyle\nabla\!,v}(\alpha^{\scriptscriptstyle\nabla\!,v})^a f_a^{\scriptscriptstyle\nabla\!,v}(p)=\int_s (\alpha^{\scriptscriptstyle\nabla\!,v})^a f_a(v) \in E_v,
			\]
			where we used Eq.~\eqref{PPF_def}. 
			Because of the last expression, in our development below, this $E_v-$valued integral will be often identified with the $ \mathbb{R} ^r-$valued integral of the local expression $\alpha^{\scriptscriptstyle\nabla\!,v}$, namely, with
			\[
			\int_s \alpha^{\scriptscriptstyle\nabla\!,v}= \int_s R^{\scriptscriptstyle\nabla\!,v} \alpha,
			\]
			see Eq.~\eqref{loc_alpha}. Here $R^{\scriptscriptstyle\nabla\!,v}$ the matrix of $\mathcal{R} ^{\scriptscriptstyle\nabla\!,v}$ with respect to the frame $\{f_a\}$. One passes from one to the other representation by using the frame $\{f_a\}$ at the point $v$. 
		\end{remark}

		Note that with the change of frame field from $\{f_a\}$ to $\{f^{\scriptscriptstyle\nabla\!,v}_a\}$, forms are not the only entities changing: the differential $d$ also changes its coordinate-based expression in the PPF frame accordingly.
		
		It is now a trivial exercise to check that $\omega^{\scriptscriptstyle\nabla\!,v}(v) = 0$, i.e., $ \omega^{\scriptscriptstyle\nabla\!,v}$ vanishes at the point $v$, by construction. Assuming that the curvature $2-$form is bounded,  we can thus deduce that $\|\omega^{\scriptscriptstyle\nabla\!,v}\| = \mathcal{O}(h)$ on $s$, where $h$ is the diameter of the simplicial region $s$. 
		If we now express the covariant exterior derivative in this novel frame field $\{f^{\scriptscriptstyle\nabla\!,v}_a\}$, as explained in Sec.~\ref{sec:operators-smooth-world}, Eq.~\eqref{eq:ext-cov-derivative-local}, we obtain with this choice of coordinates the following connection-dependent expression:
		\[\left(d^\nabla\alpha\right)^{\scriptscriptstyle\nabla\!,v} = d \alpha^{\scriptscriptstyle\nabla\!,v} + \omega^{\scriptscriptstyle\nabla\!,v}\wedge\alpha^{\scriptscriptstyle\nabla\!,v}.\]
		
		We now recall from Def.~\ref{def:bundle-valued-integral-higher-order} that the connection dependant integral is given by
		\[
		\connintidx{\varphi_v}_s d^\nabla\alpha:=\int_s \mathcal{R} ^{\scriptscriptstyle\nabla\!,v} d^ \nabla \alpha \in E_v.
		\]
		As commented above, this $E_v-$valued integral can be identified with the $ \mathbb{R} ^r -$valued integral $\int_s  (d^\nabla\alpha)^{\scriptscriptstyle\nabla\!,v}$, which yields
		
		\begin{align}
			\int_s  (d^\nabla\alpha)^{\scriptscriptstyle\nabla\!,v}&=  
			\int_s d \alpha^{\scriptscriptstyle\nabla\!,v} + \omega^{\scriptscriptstyle\nabla\!,v}\wedge\alpha^{\scriptscriptstyle\nabla\!,v} = \int_{\partial s}\alpha^{\scriptscriptstyle\nabla\!,v} + \int_{s}\omega^{\scriptscriptstyle\nabla\!,v}\wedge\alpha^{\scriptscriptstyle\nabla\!,v}\notag\\
			 &=\int_{\partial s}R^{\scriptscriptstyle\nabla\!,v}\alpha + \int_{s}\omega^{\scriptscriptstyle\nabla\!,v}\wedge\alpha^{\scriptscriptstyle\nabla\!,v} .\label{eq:integral-in-ppf-of-d-nabla} 
		\end{align}

		\begin{figure}
			\centering
			\includegraphics[width = 0.4\linewidth]{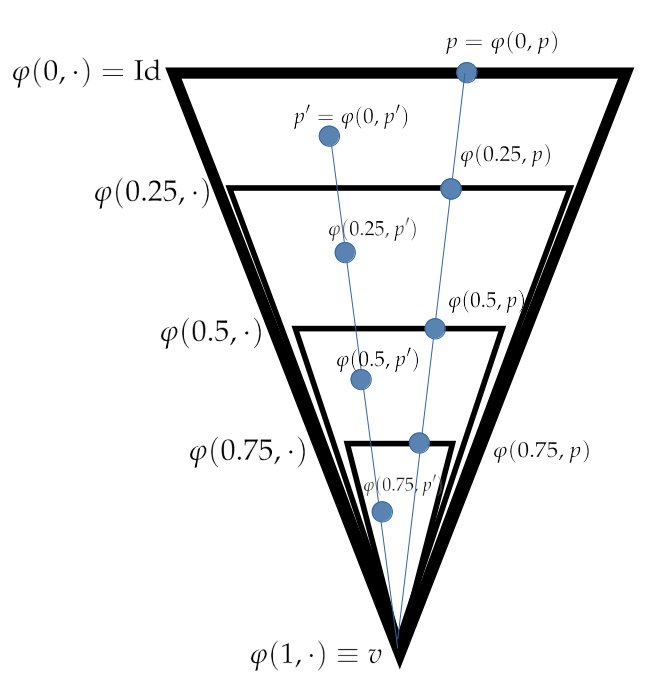}
			\caption{\textbf{Retraction.} In this illustration, we consider a simplicial cell $\sigma$ and demonstrate the retraction of $\sigma$ onto one of its vertices $v \!\in\! \sigma$. To aid visualization, an offset from the initial triangle is used to depict the shrinkage induced by the retraction function $\varphi$. 
				For any point $p \!\in\! \sigma$, the aforementioned retraction induces radial joining paths from $p$ to $v$, which can serve as paths for parallel transport from  source point $p$ to $v$. This parallel transport, defined along these paths, can be used subsequently to define the parallel propagated frame based in $v$.  }
			\label{fig:retraction-ppf}
		\end{figure}
		
		Given that the volume of $s$ is of order $h^{\ell+1}$, the last term $\int_{s} \omega^{\scriptscriptstyle\nabla\!,v}\wedge R^{\scriptscriptstyle\nabla\!,v}\alpha$ is of order {$\mathcal{O}(h^{\ell+2})$} due to our choice of frame field --- thus negligible when analyzing the convergence under refinement of the integrated covariant exterior derivative of the bundle-valued form $\alpha$. Only the boundary integral term is of order $\mathcal{O}(h^{\ell+1})$ and thus dominates under refinement. It holds
		\begin{equation}\label{eq:order-d-nabla} \int_{s}(d^\nabla\alpha)^{\nabla,v} = \int_{\partial s} \alpha^{\nabla,v} + \mathcal{O}(h^{\ell+2}).
		\end{equation}

		\paragraph{Consequences.}
		From Eq.~\eqref{eq:integral-in-ppf-of-d-nabla}, we notice that we can define a discrete bundle-valued exterior calculus built out of discrete forms which are the integrals of their continuous counterparts evaluated using parallel-propagated frames: once the high-order wedge product term is neglected, we get that the PPF-evaluated integral of the covariant exterior derivative of a form  $\alpha$ over a simplex $s$ can be well approximated by PPF-evaluated integrals of the form $\alpha$ over the boundary faces of $s$ --- a simple extension of the regular discrete exterior derivative for scalar-based forms defined via Stokes' theorem. The remainder of this paper describes this approach in detail; we will even show that notions that we proposed previously, like the discrete curvature two-form for example, can also be understood in light of this PPF-based calculus, providing a formal link between discrete and continuous calculus.
		
		\subsection{Revisiting the Discretization of Bundle-valued Forms}
		\label{sec:revisitingDisc}
		While Def.~\ref{def:discreteForm} formalized a discrete bundle-valued differential form as a collection of abstract maps, we can now express what these maps are in relation to the continuous case. Once this discretization (often referred to the de Rham map for scalar-based discrete exterior calculus~\cite{Bossavit:1999}) is defined, we can then define a notion of reconstruction which provides a differential form interpolating the discrete mesh values (referred to as the Whitney map in scalar-based discrete exterior calculus~\cite{Bossavit:1999}). \vspace*{-2mm}
		
		\paragraph{De Rham map for bundle-valued forms.}
		Based on the continuous retraction explained in Sec.~\ref{sec:ppfIntro}, the value of a discrete form evaluated on a simplex $\sigma$ at a vertex $v$ could be defined as the PPF-induced integration of its continuous counterpart form, where a simplicial region $s$ of a continuous manifold is approximated as the simplex $\sigma$, i.e., 
		\begin{equation}\label{eq:vertex-based-discretization} \smash{\boldsymbol{\alpha} (\sigma ,v) \coloneqq 
				\int_{s} \mathcal{R}^{\scriptscriptstyle\nabla\!,v}\alpha}\in E_v.
		\end{equation} 
		However, having to store values for each pair of simplex $\sigma$ and corner vertex $v$ grows rapidly with the dimension of the simplex, and it does not match the number of degrees of freedom of the scalar case of discrete or finite-element exterior calculus. 
		
		\noindent\textbf{Instead, we propose to use a \emph{barycenter-based PPF} per simplex, leading to a single value per $\ell-$simplex for an $\ell-$form} as conventional discretizations of exterior calculus of scalar-based forms~\cite{Bossavit_CEM,disc_differential_forms_modelling,FEEC}. Corner evaluations for this simplex can then be achieved through parallel transport $ \mathcal{R} _{v,c_s}$ from its barycenter $c_s$ to one of its corners $v$ --- which is either evaluated using the continuous parallel transport or approximated using the discrete $1-$connection (through the integral of a least-squares estimate of a linear $\omega$ within the simplex or the use of Whitney basis $1-$forms); in this case, the discretization of a bundle-valued form evaluated on a simplex $\sigma$ at a vertex $v$ becomes: \vspace*{-1mm}
		\begin{equation}
			\label{eq:center-based-discretization}
			\boldsymbol{\alpha}(\sigma, v) = \mathcal{R}_{v,c_s}\int_{s} \mathcal{R}^{\scriptscriptstyle\nabla\!,c_s}\alpha \in E_v.
		\end{equation}
		
		In practice, one could store directly the barycenter-based value, or just one of its derived corner values per simplex $\sigma$ as all other corner values can be recovered through parallel transport.
		While these two definitions exactly match for $1-$forms as parallel transport along edges is known (while it has to be interpolated from edge values for higher-order simplices), they differ by $\mathcal{O}(h^{\ell+3})$ for a general $\ell-$dimensional simplicial region $s$.
		Note that both definitions are frame-dependent: they rely on a canonical frame field, the PPF. This is a specificity of bundle-valued forms on non-flat manifolds as it involves non-commutative compositions of parallel transport: discretization must account for a particular choice of a local frame field to even make sense. Once these initial simplex values are established via the resulting de Rham maps we just described, the actual continuous manifold can be forgotten as all computations of our discrete calculus will solely use these values of discrete forms.

		\paragraph{Whitney map for bundle-valued forms.}
		Given the definition of the de Rham map described above, constructing a Whitney map (mapping all the barycenter-based values of all $\ell-$simplices of a discrete bundle-valued $\ell-$form $\boldsymbol{\alpha}$ to a differential bundle-valued form $\tilde{\alpha}$) is rather simple.
		For a simplex $\tau$ of dimension  $\dim(\tau)\geq \ell$, let $t$ be the simplicial region that is the image of $\tau$ under a diffeomorphism following the conventions in Def.~\ref{def:PPFcontinous}. Let $\sigma^{\ell}(\tau)$ be the set of all simplices of dimension $\ell$ included in $\tau$.
		The reconstruction of a differential $\ell-$form $\tilde{\alpha}$ expressed in the barycenter-based PPF of $\tau$ can be achieved through the weighted sum of the discrete bundle-valued forms for all $\ell-$dimensional simplices $\sigma \!\in\! \sigma^{\ell}(\tau)$. The pointwise evaluation of $\tilde{\alpha}$ for a given point $p$ then depends on the lowest-dimensional simplex $\tau$ that contains $p$: the evaluation of the bundle-valued $\ell-$form $\tilde{\alpha}$ on $\tau$ at point $p \!\in\! \tau$ given by
		\begin{equation}
			\label{eq:center-based-reconstruction}
			\tilde{\alpha}_{|_{\tau}} (p) =  \mathcal{R} ^{\scriptscriptstyle\nabla\!,c_{\tau}}(p)^{-1} \sum_{\sigma\in \sigma^{\ell}(\tau)}  \mathcal{R} _{c_{\tau},c_{\sigma}} \, \phi_{\sigma}(p)  \, \boldsymbol{\alpha}(\sigma,c_{\sigma}) \;\in \Omega  ^\ell( \tau , E),
		\end{equation}
where $\phi_{\sigma}$ is the Whitney $\ell-$form associated with $\sigma$ within $\tau$, and $\mathcal{R} _{c_{\tau},c_{\sigma}}$ is barycenter-to-barycenter parallel transport constructed based on the discrete $1-$form connection. 
		One can check that the de Rham map of a Whitney map of a discrete bundled-valued form is the identity, providing a proper link between continuous and discrete bundle-valued forms: indeed, for a given $\ell-$cell $\eta$, the Whitney map for any point $p$ inside $\eta$ is given by 
\begin{equation*}
\tilde{\alpha}_p =  \mathcal{R} ^{\scriptscriptstyle\nabla\!,c_{\eta}}(p)^{-1} \sum_{\sigma\in \sigma^{\ell}(\eta)}  \mathcal{R} _{c_{\eta},c_{\sigma}} \, \phi_{\sigma}(p)  \, \boldsymbol{\alpha}(\sigma,c_{\sigma}) {=} \mathcal{R} ^{\scriptscriptstyle\nabla\!,c_{\eta}}(p)^{-1}  \mathcal{R} _{c_{\eta},c_{\eta}} \, \phi_{\eta}(p)  \, \boldsymbol{\alpha}(\eta,c_{\eta}), 
\end{equation*}
since the set of all $\ell-$simplices $\sigma^\ell(\eta)$ is reduced to $\eta$ for any $p\!\in\! {\eta}$. Therefore, the re-discretization of the Whitney map through the integral in the parallel-propagated frame field yields back the original discrete form.
		
		\begin{remark}
			Note that we provide, through Eq.~\eqref{eq:center-based-reconstruction}, a piecewise expression of the Whitney map to convert the discrete values of $\ell-$simplices into a differential form. This should not be surprising as transitioning between simplices of different dimensions means that the local frame field changes. Consequently, the coordinate representation of the form's evaluation may exhibit jumps across simplices. While readers may find this behavior perplexing, it mirrors the jumping behavior already observed in scalar-valued Whitney forms when changing cells, as only tangential continuity is enforced. Moreover, in our case, we must now also accommodate the varying frame fields, leaving no choice but to use a piecewise definition. 
		\end{remark}
        
\begin{remark}        
It is possible to construct a PPF for any small neighborhood containing multiple simplices with an arbitrary initial local frame field. Indeed, we may consider a geodesic graph with edges connecting the barycenter of each simplex to the barycenter of each of its incident simplices. Picking a seed center $c_{\sigma_0}$, we can parallel transport its frame to all the barycenters in the neighborhood through a minimum spanning tree. We then parallel-transport the frames at each barycenter to all the points within the interior of its associated simplex. In this PPF covering the neighborhood (we can make it smooth through convolution with a mollifier if the frame field needs to be continuous), the integral of any $\ell-$form can be assembled through $\mathcal{R}_{c_{\sigma_0},c_\sigma}\alpha(\sigma,c_\sigma)$ with an error of $\mathcal{O}(h^{\ell+2})$ under the assumption of bounded curvature. This allows bundle-valued $\ell-$forms to be locally treated as a vector-valued $\ell-$cochain through integration over an oriented local $\ell-$submanifold discretized as an $\ell-$chain. However, one should note that it is path-dependent on the choice of parallel transport (through a minimum spanning tree in our example), and will not result in a discrete version of Stokes' theorem, as expected from the continuous theory.
\end{remark}
		
\subsection{\!Discrete \!Exterior \!Covariant \!Derivative \!for \!Bundle-valued\! Forms\!}
		\label{sec:DECDbundle}
		In this section, we formally define our discrete operator for the exterior covariant derivative, in two steps to highlight the similarities and differences with previous work. 
		
		\paragraph{Sided Exterior Covariant Derivative.}
		Guided by Eq.~\eqref{eq:integral-in-ppf-of-d-nabla}, Eq.~\eqref{eq:order-d-nabla} and our meaning of a discrete form from Eq.~\eqref{eq:vertex-based-discretization}, we first define a discrete operator $\mathfrak{d}^{ \boldsymbol{\nabla}}$ which approximates the exterior covariant derivative.

				\begin{definition}[PPF-induced Covariant Exterior Derivative]
					\label{def:ppf-derivative}
					Let $M$ be a discrete manifold and $\mathbf{E}\!\to\! M$ be a discrete vector bundle with connection $\boldsymbol{\nabla} \!=\! \{\mathcal{R}_{ij} \}$.  Let $\sigma \!=\! [v_0,\ldots,v_{\ell+1}]$ be an $(\ell+1)-$simplex and $\boldsymbol{\alpha}$ a discrete $\bE-$valued $\ell-$form. We define the \emph{sided} discrete exterior derivative of the form $\boldsymbol{\alpha}$ as 
					\begin{align}
						\mathfrak{d}^{\boldsymbol{\nabla}}\boldsymbol{\alpha}([v_0,\ldots,v_{\ell+1}],v_0) = &\mathcal{R}_{0,1}\ \boldsymbol{\alpha}([v_1,\ldots,v_{\ell+1}],v_1)\notag\\ 
						&+ \sum_{i = 1}^{\ell+1} (-1)^i \boldsymbol{\alpha}([v_0,\ldots,\hat{v}_i,\ldots,v_{\ell+1}],v_0).
						\label{eq:defFrakD}
					\end{align}
					
				\end{definition}

				\begin{remark}
					\label{rem:ppf-integral-ppf-derivative}
					The operator $\mathfrak{d}^{\boldsymbol{\nabla}}$ was already introduced in an equivalent form in~\cite{Hirani_Bianchi} as \emph{the} exterior covariant derivative. One of our contributions is to show that if the discrete form $\boldsymbol{\alpha}$ arises through integration in a vertex-based PPF and the discrete connection $\boldsymbol{\nabla}$ arises through integration from a smooth connection $\nabla$, then this operator approximates the integral in Eq.~\eqref{eq:integral-in-ppf-of-d-nabla} with
					
					\[    \mathfrak{d}^{\boldsymbol{\nabla}}\boldsymbol{\alpha}([v_0,\ldots,v_{\ell+1}],v_0) \!=\! \int_{s} \mathcal{R} ^{\scriptscriptstyle\nabla\!,v_0}\  d^\nabla\alpha \!+\! \mathcal{O}(h^{\ell+2})
					\]
					and, thus, converges under refinement.
					To see this, consider the case of a one-form $\alpha$ and a triangle $\sigma\!\coloneqq\![v_a,v_b,v_c]\subset M$. From Eq.~\eqref{eq:integral-in-ppf-of-d-nabla}, we know that the integral over $\sigma$ of the exterior covariant derivative in the PPF based at $v_a$ turns into
					\begin{align*}
						\int_{\sigma} R^{\scriptscriptstyle\nabla\!,v_a} d^\nabla\alpha  
						= \int_{[v_a,v_b]} R^{\scriptscriptstyle\nabla\!,v_a}\alpha + \int_{[v_b,v_c]}  R^{\scriptscriptstyle\nabla\!,v_a}\alpha + \int_{[v_c,v_a]} R^{\scriptscriptstyle\nabla\!,v_a}\alpha + \mathcal{O}(h^{3}),
					\end{align*}
					see Remark~\ref{identification}.
					Def.~\ref{def:ppf-derivative} tells us that $\mathfrak{d}^{\boldsymbol{\nabla}} \boldsymbol{\alpha}([v_a,v_b,v_c],v_a)\coloneqq  \mathcal{R} _{v_av_b} \boldsymbol{\alpha}([v_b,v_c],{v_b}) - \boldsymbol{\alpha}([v_a,v_c],{v_a}) + \boldsymbol{\alpha}([v_a,v_b],{v_a}).$ Since our values from the discrete form $\boldsymbol{\alpha}$ were induced from a $v_a-$centered PPF, we have
					\[
					\int_{[v_a,v_b]} \mathcal{R} ^{\scriptscriptstyle\nabla\!,v_a}\alpha =\boldsymbol{\alpha}([v_a,v_b],{v_a})\quad\text{and}\quad \int_{[v_c,v_a]} \mathcal{R} ^{\scriptscriptstyle\nabla\!,v_a}\alpha=- \boldsymbol{\alpha}([v_a,v_c],{v_a}),
					\] 
					hence the continuous and discrete definitions match if we can prove that:
					\[
					\mathcal{R} _{v_a,v_b} \boldsymbol{\alpha}([v_b,v_c],{v_b}) - \int_{[v_b,v_c]}  \mathcal{R} ^{\scriptscriptstyle\nabla\!,v_a}\alpha  = \mathcal{O}(h^{3}).
					\]
					Indeed, one has:
					\begin{equation}
						\label{eq:error-opp-face-vertex-based-PPF}
						\mathcal{R} _{v_a,v_b}\int_{[v_b,v_c]} \mathcal{R} ^{\scriptscriptstyle\nabla\!,v_b}\alpha - \int_{[v_b,v_c]} \mathcal{R} ^{\scriptscriptstyle\nabla\!,v_a}\alpha = \int_{[v_b,v_c]} \left(\mathcal{R} _{v_a,v_b} \mathcal{R} ^{\scriptscriptstyle\nabla\!,v_b} - \mathcal{R} ^{\scriptscriptstyle\nabla\!,v_a}\right)\alpha,
					\end{equation}
					and since for any point $p$ in $\sigma$ we can build an auxiliary simplex $[v_a,v_b,p]$ such that Eq.~\eqref{eq:error-opp-face-vertex-based-PPF} turns into an integral of evaluations of discrete curvature expression in the sense of Def.~\ref{def:discrete-curvature}, we get 
					\[\left(\mathcal{R} _{v_a,v_b} \mathcal{R} ^{\scriptscriptstyle\nabla\!,v_b} - \mathcal{R} ^{\scriptscriptstyle\nabla\!,v_a}\right)(p) = \boldsymbol{\Omega^\nabla}([v_a,v_b,p],v_a,p)\;\sim\; \mathcal{O} (h^2).\]
					Below, we will delve into further detail regarding how the discrete curvature, defined as the difference of parallel transport, accurately approximates the sampled smooth curvature up to order $\mathcal{O}(h^3)$. This analysis will provide justification for the estimation $\boldsymbol{\Omega^\nabla}([v_a,v_b,p],v_a,p)\;\sim\; \mathcal{O} (h^2)$.  
					
					Hence, the discrete operator $\mathfrak{d}^{ \boldsymbol{\nabla }}$ will converge in the limit to the correct pointwise value of the exterior covariant derivative. Note that this argument holds for a form of arbitrary order $\ell$: each time, it is the opposite face to the evaluation vertex that needs to be examined to prove the equivalence between discrete and continuous definitions up to $\mathcal{O}(h^{\ell+2})$.

				\end{remark}

			\paragraph{Averaging Derivatives.}
			While we proved the convergence of $\mathfrak{d}^{\boldsymbol{\nabla}}$ to its continuous equivalence, one may note that the order of accuracy is not sufficient to ensure that two consecutive applications of this operator ($\mathfrak{d}^{\boldsymbol{\nabla}} \circ \mathfrak{d}^{\boldsymbol{\nabla}}$) will still converge, which is the real obstacle to establishing the Bianchi identities (Eqs.~\eqref{eq:algBianchi} and ~\eqref{eq:diffBianchi}). We thus further alter our discrete exterior covariant derivative operator to gain one additional order of accuracy through local averaging, with an operator we denote as the \emph{alternation operator}.
			
			\begin{definition}[Alternation Operator for Discrete Bundle-Valued Forms] 
				\label{def:alternation_vector_valued_forms}
				Let $\bE$ be a discrete vector bundle with connection $\boldsymbol{\nabla}\!=\!\{\mathcal{R}_{ij}\}_{(ij)\in\mathcal{E}}$ over a discrete manifold $M$. Let $\sigma= [v_0,\ldots,v_{\ell}]\in M$ be an $\ell-$simplex. For a discrete $\bE$ valued $\ell-$form $\boldsymbol{\alpha}$ we define its ``alternated'' form $\smash{\mathrm{Alt}^{\boldsymbol{\nabla}}(\boldsymbol{\alpha})}$ on $\sigma$ evaluated at $v_0$ via
				\[\mathrm{Alt}^{\boldsymbol{\nabla}}(\boldsymbol{\alpha})([v_0,\ldots,v_\ell],{v_0}) = \frac{1}{(\ell+1)!}\sum_{\pi\in S_{\ell+1}}\mathrm{sgn}(\pi) \mathcal{R}_{v_0,v_{\pi(0)}}\boldsymbol{\alpha}([v_{\pi(0)},\ldots,v_{\pi(\ell)}],{v_{\pi(0)}}).
				\]
				where $S_{\ell+1}$ is the set of all permutations $\pi$ of the $(\ell+1)$ vertices of $\sigma$. 				
			\end{definition}

			Note that this operator will allow us to transform the operator $\mathfrak{d}^{ \boldsymbol{\nabla }}$ so that the vertex $v_0$ in Eq.~\eqref{eq:defFrakD} from Def.~\ref{def:ppf-derivative} is no longer singled out arbitrarily: the reshuffling from the alternation operator will average all the similar discrete exterior covariant derivative estimates using all other vertices. Moreover, one can easily check that the connection $1-$form is stable under this alternation operator.
			
			\begin{remark}
				\label{rem:Alternation-not-a-projector}
				Note that the \emph{alternation of a discrete bundle-valued form} requires a connection to allow for the averaging to be performed in a common fiber, in contrast to the direct symmetrization of the smooth case. It is thus easy to show that in general, $\mathrm{Alt}^{\boldsymbol{\nabla}}(\mathrm{Alt}^{\boldsymbol{\nabla}}(\boldsymbol{\alpha}))(\sigma,v_0) \neq \mathrm{Alt}^{\boldsymbol{\nabla}}(\boldsymbol{\alpha})(\sigma,v_{0})$, i.e., the alternation operator is not a projector. However, we preserve the fact that the alternation operator commutes with the pullback of a simplicial map in the sense that $\mathrm{Alt}^{f^\ast\boldsymbol{\nabla}}(f^\ast\boldsymbol{\alpha})(\sigma,{v_0}) \!=\! \mathrm{Alt}^{\boldsymbol{\nabla}}(\boldsymbol{\alpha})(f(\sigma),{f(v_0)})$.
			\end{remark}
			
			\paragraph{Discrete Exterior Covariant Derivative.}
			\noindent We now propose to change the formula of the exterior derivative as follows.
			
			\begin{definition}[Discrete Bundle-valued Exterior Covariant Derivative]\label{def:ext_der_alternation_group}
				Let $\bE$ be a discrete vector bundle with connection $\boldsymbol{\nabla}$ over a discrete manifold $M$. Let $\sigma \!\subset\! M$ with $ \sigma \!=\! [v_0,\ldots,v_{\ell+1}]$ be a $(\ell+1)-$simplex. For a discrete vector-valued differential $\ell-$form $\boldsymbol{\alpha}$ we define the discrete covariant exterior derivative with the alternation from Def.~\ref{def:alternation_vector_valued_forms} as 
				\[
				\bolddnab\boldsymbol{\alpha}([v_0,\ldots,v_{\ell+1}],{v_0}) = \mathrm{Alt}^{\boldsymbol{\nabla}}\mathfrak{d}^{\boldsymbol{\nabla}}(\boldsymbol{\alpha})([v_0,\ldots,v_{\ell+1}],{v_0}).
				\]
			\end{definition}
			
			Applying the alternation operator after $\mathfrak{d}^{\boldsymbol{\nabla}}$ removes the arbitrariness 
			of picking the second vertex in the simplex $\sigma$ in Eq.~\eqref{eq:defFrakD}, thus in effect averaging various \emph{sided} exterior covariant derivative estimates of equal approximation order $\mathcal{O}(h^{\ell+2})$. Through the averaging, the resulting discrete exterior covariant derivative gains one order of accuracy, becoming accurate to $\mathcal{O}(h^{\ell+3})$. This accuracy improvement is akin to the accuracy gain one obtains when using centered vs. sided finite difference approximations of derivatives. More precisely, we will show in the convergence proof in Thm.~\ref{thm:alternation-augments-accuracy} that if $ \alpha \in \Omega ^\ell(M, E)$ and $ \boldsymbol{ \alpha  }$ is the associated discrete form as defined in Eq.~\eqref{eq:vertex-based-discretization}, then the following holds 
			\[\bolddnab\boldsymbol{\alpha}([v_0,\ldots,v_{\ell+1}],v_0) \!=\! \mathcal{R} _{v_0,c_s}\int_{s} \mathcal{R} ^{\scriptscriptstyle\nabla\!,c_s} \, d^\nabla\alpha \!+\! \mathcal{O}(h^{\ell+3}),\]
			
			where $c$ denotes the barycenter of a region. That is, the alternation results in a barycenter-based PPF evaluation, parallel transported back to the evaluation vertex $v_0$. This barycentric PPF, replacing the vertex-based PPF, is what leads to an improvement in accuracy, even though the discretization of $\alpha$ is still achieved through an integral within a vertex-based PPF.
		
			\subsection{Discrete Endomorphism-valued Exterior Covariant Derivative}\label{sec:disrete-end-valued}
			After defining a discrete exterior covariant derivative applicable to bundle-valued forms in Sec.~\ref{sec:DECDbundle}, we need to extend this operator to discrete two-prong endomorphism-valued forms as well in order to be able to apply it, for instance, to discrete curvatures. 
			
			\begin{definition}(PPF-induced discrete exterior covariant derivative for endomorphism-valued forms) 
				\label{def:lie-algebra-ppf-derivative}
				Let $\bE$ be a discrete vector bundle over a discrete manifold $M$ with connection $\boldsymbol{\nabla}\!=\!\{\mathcal{R}_{ij}\}$. Let $\sigma = [v_0,\ldots,v_{\ell+1}]\subset M$ be a $(\ell+1)-$simplex and  $\boldsymbol{\beta}$ be a discrete $(1,1)-$tensor valued $\ell-$form. Finally, for $v\!\in\! \sigma$, let $\sigma_v =\smash{[v_0,\ldots,\hat{v},\ldots,v_{\ell+1}]}$ be the simplicial face of $\sigma$ opposite to vertex $v$.
				We define the PPF-induced discrete exterior covariant derivative on simplex $\sigma$ for $(1,1)-$tensor valued forms given an evaluation fiber $\smash{\bE_{v_0}}$ and a cut fiber $\bE_{v_{\ell+1}}$ through:\vspace*{-3mm}
				\begin{align*}
					\mathfrak{d}^{\!\boldsymbol{\nabla}}\boldsymbol{\beta}(\sigma, v_0,v_{\ell+1}) \!=\! &\mathcal{R}_{01} \boldsymbol{\beta}(\sigma_{v_0},v_1,v_{\ell+1})+ \sum_{i = 1}^\ell (-1)^i \boldsymbol{\beta}(\sigma_{v_i},v_0,v_{\ell+1})\\
					 &+ (-1)^{\ell+1} \boldsymbol{\beta}(\sigma_{v_{\ell+1}},v_0,v_\ell) \mathcal{R}_{\ell,\ell+1}.\vspace*{-3mm}
				\end{align*}
			\end{definition}
			This new definition is simply a variant of Def.~\ref{def:ppf-derivative} to account for the two-prong nature of discrete endomorphism-valued forms. It was already proposed, as is, in~\cite{Hirani_Bianchi}; but for the same convergence issues discussed earlier, we must complement this sided definition with a post-alternation to make it useful numerically, which also requires a new definition for the case of $(1,1)-$tensor-valued forms, as given below.
			
			\begin{definition}(Alternation Operator for Discrete $(1,1)-$tensor-valued Forms)
				\label{def:lie-algebra-alternation}
				Let $\bE$ be a discrete vector bundle with connection $\boldsymbol{\nabla}\!=\!\{\mathcal{R}_{ij}\}_{(ij)\in\mathcal{E}}$ over a discrete manifold $M$. Let $\sigma= [v_0,\ldots,v_{\ell}]\in M$ be an $\ell-$simplex. For a discrete $(1,1)-$tensor valued $\ell-$form $\boldsymbol{\beta}$ we define its ``alternated'' form $\smash{\mathrm{Alt}^{\boldsymbol{\nabla}}(\boldsymbol{\beta})}$ 
				for an $\ell-$form $\boldsymbol{\beta}$ on the simplex $\sigma$ evaluated in $v_0$ with input in $v_\ell$ through 
				\begin{align*}
					\mathrm{Alt}&^{\boldsymbol{\nabla}}\boldsymbol{\beta}([v_0,\ldots,v_\ell],{v_0},v_{\ell}) \\
					&= \frac{1}{(\ell+1)!}\sum_{{\pi\in S_{\ell+1}}} \left(\frac{1+\mathrm{sgn}(\pi)}{2}\mathcal{R}_{v_0,v_{\pi(0)}}\boldsymbol{\beta}([v_{\pi(0)},\ldots,v_{\pi(\ell)}],{v_{\pi(0)}},{v_{\pi(\ell)}}) \mathcal{R}_{v_{\pi(\ell)},v_\ell}\right.\\
					&\quad \left.+\frac{\mathrm{sgn}(\pi)-1}{2}\mathcal{R}_{v_0,v_{\pi(0)}}\boldsymbol{\beta}([v_{\pi(0)},\ldots,v_{\pi(\ell)}],{v_{\pi(0)}},{v_{\pi(\ell)}}) \mathcal{R}_{v_{\pi(\ell)},v_{\pi(\ell-1)}} \mathcal{R}_{v_{\pi(\ell-1)},v_\ell}\right).
				\end{align*}
			\end{definition}
			
			\begin{remark}[Invariance of the alternation for the curvature form]
				\label{rem:Omega-invariant-alt}
				Note that the curvature form is stable under this definition of the alternation of $(1,1)-$tensor valued forms: for a simplicial cell $\sigma = [abc],$ one can directly check that
				\vspace*{-1mm}
				\begin{align*}
					\mathrm{Alt}^{\!\boldsymbol{\nabla}}\!\boldsymbol{\Omega^{\!\nabla}}\!([abc],a,c)&=\frac{1}{6}\big(\boldsymbol{\Omega^{\!\nabla}}\!([abc],a,c) \!+\! R_{ab}\boldsymbol{\Omega^{\!\nabla}}\!([bca],b,a) R_{ac} \!+\! R_{ac}\boldsymbol{\Omega^{\!\nabla}}\!([cab],c,b) R_{bc}\\
					&\;-\!\boldsymbol{\Omega^{\!\nabla}}\!([acb],\!a,\!b)R_{bc} \!-\! R_{ab}\boldsymbol{\Omega^{\!\nabla}}\!([bac],\!b,\!c) \!-\! R_{ac}\boldsymbol{\Omega^{\!\nabla}}\!([cba],\!c,\!a) R_{ab} R_{bc}\big)\\
					& = \boldsymbol{\Omega^{\!\nabla}}([abc],a,c).\vspace*{-3mm}
				\end{align*}
				This property would not have held if an average with pre- and post- composition along a joining edge had been used.
			\end{remark}
			
			From the two operators above, we can now define our discrete exterior covariant derivative for endomorphism-valued forms.
			
			\begin{definition} [Discrete endomorphism-valued exterior covariant derivative]
				Let $\bE$ be a discrete vector bundle over a discrete manifold $M$ with connection $\boldsymbol{\nabla}\!=\!\{\mathcal{R}_{ij}\}$. Let $\sigma = [v_0,\ldots,v_{\ell+1}]\subset M$ be a $(\ell+1)-$simplex and  $\boldsymbol{\beta}$ be a discrete $(1,1)-$tensor valued $\ell-$form. 
				We define the discrete covariant exterior derivative on simplex $\sigma$ for $(1,1)-$tensor valued forms given an evaluation fiber $\smash{\bE_{v_0}}$ and a cut fiber $\bE_{v_{\ell+1}}$ through:\vspace*{-1mm} 
				\[\bolddnab\boldsymbol{\beta}([v_0,\ldots,v_{\ell+1}],{v_0},{v_{\ell+1}}) = \mathrm{Alt}^{\boldsymbol{\nabla}}\mathfrak{d}^{\boldsymbol{\nabla}}\boldsymbol{\beta}([v_0,\ldots,v_{\ell+1}],{v_0},{v_{\ell+1}}).\] 
			\end{definition}
			
			As we will discuss at length in Sec.~\ref{sec:convergence-analysis} when we provide numerical tests, our proposed operator for endomorphism-valued forms satisfies our initial goals of proper accuracy order and consistency with the vector-valued case (so that we obtain a discrete algebraic Bianchi identity). However, it is \emph{not} the only definition with these properties: while we average over all possible permutations, one could pick only a subset of permutations (which, in practice, improves the efficiency of the computations). In this case, not all combinatorial properties will be fulfilled. However, the improved order of convergence in the limit can still be maintained.

			\subsection{Revisiting Discrete Curvature}
			\label{sec:revisitCurv}
			The earlier definition of the discrete curvature two-form $\boldsymbol{\Omega^\nabla}$ in Sec.~\ref{sec:discreteCurv2Form} was \emph{postulated} directly from a discrete connection $\boldsymbol{\nabla}$ (and its associated connection one-form $\boldsymbol{\omega}$, see Sec.~\ref{sec:discrete1Form}) as an early example of discrete (1,1)-tensor-valued form: it was obtained by extending the notion of holonomy-based curvature by measuring the difference of parallel transport on two different paths between a ``cut'' and an ``evaluation'' fiber. Now that the link between discrete bundle-valued forms and continuous ones through PPF-dependent integration has been established, we revisit this key notion to show that it mirrors the continuous definition of the curvature two-form after a specific contraction-based integration.

			\paragraph{Continuous case.}
			We consider a \emph{contractible} two-dimensional region $C$ (assumed, without loss of generality, to be a half-disk of a smooth manifold $M$ and a vector bundle $ \pi :E \!\rightarrow\! M$ with connection $ \nabla $. Given a local frame field $\{f_a\}$, the connection can be expressed with the local connection $1-$form $ \omega $, and the curvature $2-$form is locally given as $\Omega^\nabla = d\omega + \omega\wedge \omega$,  see Eq.~\eqref{eq:Omega-local}. Let us select two points, $v$ and $w$, on the boundary $\partial C$ of $C$. The point $v$ corresponds to the evaluation vertex discussed above in the discrete case, while $w$ corresponds to the cut vertex. We denote by $\gamma_{v,w}\!\subset\! \partial C$ the path joining $v$ and $w$ along the boundary of $C$ with positive orientation, and by $\gamma_{v,w}'$ the corresponding 
			boundary path with negative orientation,
			\begin{wrapfigure}[6]{r}{0.25\textwidth}
				\vspace*{-4.5mm}\hspace*{-3mm}
				\includegraphics[width=1\linewidth]{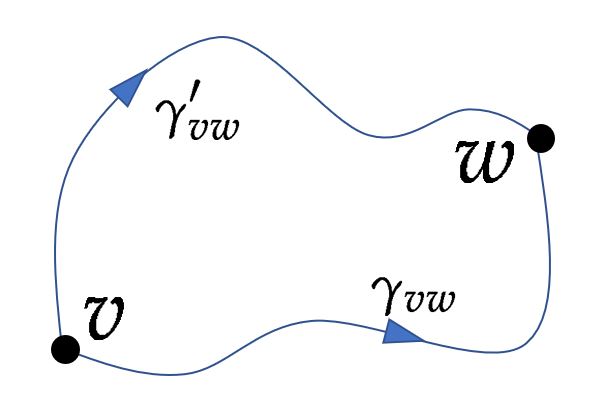}
			\end{wrapfigure}
			such that $\gamma_{v,w} - \gamma_{v,w}'\!=\!\partial C$ (see inset) 
			We proceed by generating a new frame field, denoted as $\{\tilde{f}_a\}$, through parallel transport. The initial frame, $f_a(v)$, situated at vertex $v$, is transported along two distinct paths: $\gamma_{v,w}$ and $\gamma_{v,w}'$. The discrepancy between the frames, which results at vertex $w$ after the parallel transports, is intricately linked to the holonomy. Extending this frame construction to the interior of $C$ involves a retraction corresponding to a 2D polar $(\rho, \theta)$ parametrization of the interior of $C$: in this parametrization, $\mathbf{e}_\rho$ aligns consistently with the retracted boundary, as depicted in Fig.~\ref{fig:retraction}, with vertex $w$ fixed. From this parametrization, we can construct a new frame field by parallel transporting the boundary frames along the $\mathbf{e}_y$ direction towards the interior of $C$. This process forms a new frame field, $\smash{\tilde f_bR^b_a=f_a}$, where $R$ denotes the rotation field responsible for adapting vector components from the local frame field $\{f_a\}$ into the new frame field $\{\tilde f_a\}$.
			
			It now holds for the connection $1-$form $\Tilde{\omega}$ in this new frame field that
			$\Tilde{\omega} \!=\! R\omega R^{-1} \!-\! dR R^{-1}.$
			Consequently, for any point $p\!\in\! C\!\setminus\! w$ in the punctured region, one has 
			$\Tilde{\omega}_p(e_\rho) \!=\! 0.$ 
			The curvature form $\Tilde{\Omega}^\nabla$ in the new frame field becomes $\Tilde{\Omega}^\nabla \!=\! R\Omega R^{-1}$ (see Sec.~\ref{sec:continuousWorld}). 
            With the components expressed in $\Tilde{F}$, one has
			\[\widetilde{\Omega}^\nabla = d \Tilde{\omega} + \Tilde{\omega}\wedge\Tilde{\omega}.\] 
			However in the punctured region $C\setminus w$, $\Tilde{\omega}\wedge\Tilde{\omega}\equiv 0,$ since 
			$\Tilde{\omega}\wedge\Tilde{\omega}(e_\rho,e_\theta) = \Tilde{\omega}(e_\rho)\Tilde{\omega}(e_\theta) - \Tilde{\omega}(e_\theta)\Tilde{\omega}(e_\rho) = 0$ except at the singularity $w$. Thus, the integral of the curvature becomes:
			\begin{equation}
				\int_{C}R {\Omega}^{\!\nabla} R^{-1} =\int_{C}\widetilde{\Omega}^{\!\nabla} = \int_{C} {d}\Tilde{\omega} + \Tilde{\omega}\wedge\Tilde{\omega} = \int_{\partial C}\Tilde{\omega}.\label{eq:curvature-onto-boundary}
			\end{equation}
			In other words, the fact that we parallel-propagated the frame $f_a(v)$ at $v$ through retraction proves that the mismatch at $w$ when we parallel transport the frame along the two sides of the boundary from $v$ (which was, in essence, the meaning of our discrete curvature evaluation) is exactly equal to this retraction-induced connection-dependent \emph{integral of the curvature $2-$form}.
			\begin{figure}[h] \vspace*{-2mm}
				\centering
				\includegraphics[width = 0.7\linewidth]{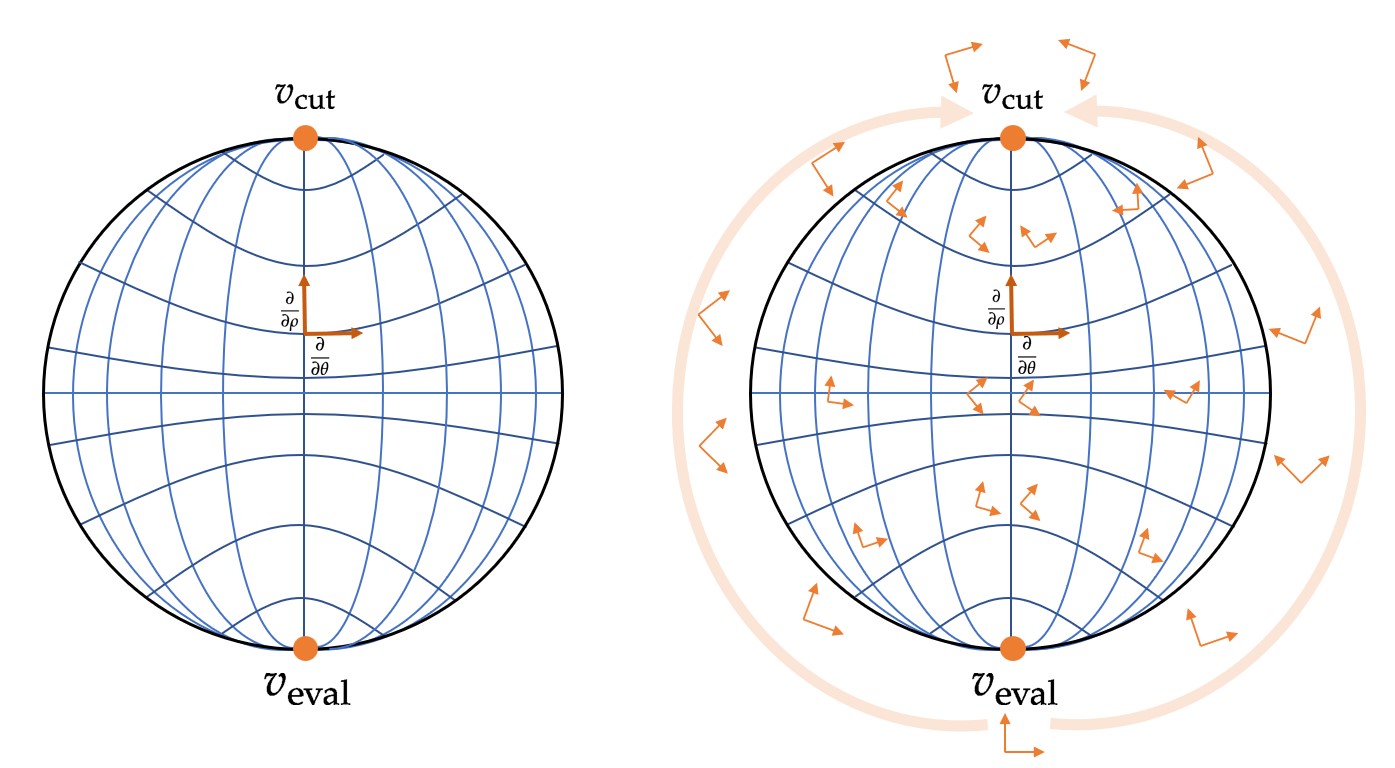}
                \vspace*{-2mm}
				\caption{\textbf{Retraction-based parameterization.} (Left): a retraction of the disk defines a $(\rho,\theta)$ (polar) parameterization of the (punctured) disk, where the $\rho$ direction is along the retraction direction, while the $\theta$ direction corresponds to a transversal direction. (Right): Starting from a parallel-transported frame field $F_\text{eval}$ from $v_\text{eval}$ along the boundary, we can parallel-propagate this boundary frame field towards the interior of the punctured disk along the curves corresponding to the $y$ direction of this retraction-based parameterization. The curvature integral simplifies to the discrepancy between frames at \( v_{\text{cut}} \), transported from \( v_{\text{eval}} \) via the retraction, and evaluated in \( \operatorname{Hom} (\bE_{v_\text{cut}},\bE_{v_\text{eval}})\).
					\vspace*{-2mm}}
				\label{fig:retraction}
			\end{figure}
			
			\begin{figure}[!htb]
				\centering
				\includegraphics[width = 0.7\linewidth]{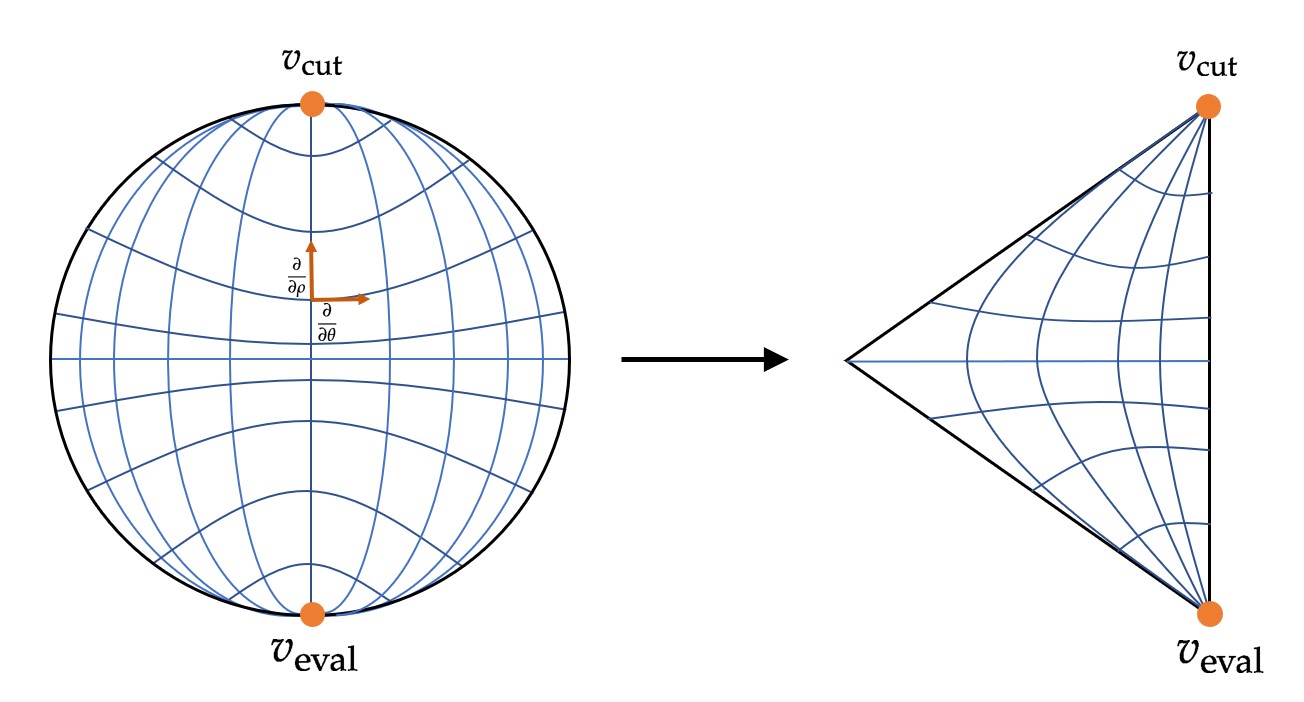}
    \caption{\textbf{Disk to Triangle.} A parallel-propagated frame over a disc induces a parameterization within the triangle once a homeomorphism to transform the disc into a triangle is defined. We illustrate the resulting parallel-transport paths as images under the homeomorphism map. The integrated curvature in this frame field precisely corresponds to the disparity in parallel transport along the two distinct boundary paths connecting the cut and evaluation fibers.}
				\label{fig:disc-to-triangle}
			\end{figure}
			
			\paragraph{Consequences for the discrete curvature two form.}
			
			The interpretation of the frame mismatch described above brings a series of consequences. First and foremost, it puts our definition of the two-prong discrete curvature $2-$form sketched in Sec.~\ref{sec:discreteCurv2Form} on solid footing as the continuous argument above remains valid on a $2-$simplex $\sigma\!=\![v,u,w]$. Additionally, it also justifies the summability of this definition, in stark contrast to a holonomy-based notion of curvature: if one considers a second simplex $\sigma'\!=\![w,z,v]$ sharing the edge ${v,w}$, then the discrete curvature $\boldsymbol{\Omega^{\!\nabla}}(\sigma, v,w)$ and the discrete curvature $\boldsymbol{\Omega^{\!\nabla}}(\sigma', v,w)$ can be summed to become the discrete non-simplicial curvature $\boldsymbol{\Omega^{\!\nabla}}(\sigma\union\sigma',v,w)$ since the two integrals sum trivially: the two canonical simplicial retractions form a retraction over their union, which means that the induced parallel-propagated frame field on each simplex
			corresponds to the frame obtained by applying the aforementioned construction for the non-simplicial union cell (see  Fig.~\ref{fig:sum-curvature-two-chains}). Consequently, the discrete notion of two-prong curvature applies as is for non-simplicial two-dimensional contractible cells, and the sum of two curvatures over a region joined by a discrete edge path delimited by the evaluation vertex and the cut vertex happens naturally as the parallel transports on each side of the common edge path cancel out. Finally, the interpretation of the two-prong discrete curvature as the discrete equivalent of a connection-based integral of the continuous form implies that $\boldsymbol{\Omega^{\!\nabla}}$ at an evaluation vertex $v$ of a simplex $\sigma$ of diameter $h$ is equivalent to the pointwise evaluation of the continuous curvature $2-$form at $v$ (on the two unit tangent vectors along the two outgoing edges at $v$), up to $\mathcal{O}(h^3)$.
			More precisely, for a simplex $\sigma = [v_{\mathrm{eval}},w,v_{\mathrm{cut}}]$ with evaluation in $E_{v_{\mathrm{eval}}}$ and cut in $E_{v_{\mathrm{cut}}}$ we obtain that the curvature integral in the novel frame field reduces to the mismatch of the frame observed at $v_\text{eval}$, expressed in $F_\text{cut}$, i.e 
			\[\int_{\sigma} R \Omega^\nabla R^{-1} = \int_{\partial \sigma}\Tilde{\omega} = R_{v_{\mathrm{eval}}{w}}R_{w v_\mathrm{cut}} - R_{v_{\mathrm{eval}},v_{\mathrm{cut}} }.\]
			By construction it holds for the gauge field at the evaluation vertex that  $R_{v_{\mathrm{eval}}} \!=\! \mathrm{Id}$. Thus, if we expand the gauge field $R$ at $v_{\mathrm{eval}}$ (or the respective Lie algebra element) we obtain that 
			\[\int_{\sigma} R \Omega^\nabla R^{-1} = \int_{\sigma} \Omega^\nabla + \mathcal{O}(h^3).\] Additionally, sampling the curvature at $v_{\mathrm{eval}}\!\in\! \sigma$ yields
			\[\int_{\sigma} \Omega^\nabla = \Omega^\nabla_{v_{\mathrm{eval}}}(v_{\mathrm{cut}}-v_{\mathrm{eval}},w-v_{\mathrm{eval}}) + \mathcal{O}(h^3)\] and therefore
			\begin{align}
				\label{eq:discrete-and-smooth-curvature}
				\int_{\sigma} R\Omega^\nabla R^{-1} &= R_{v_{\mathrm{eval}}w}R_{wv_{\mathrm{cut}}} - R_{v_{\mathrm{eval}}v_{\mathrm{cut}}}\notag \\ &= \boldsymbol{\Omega^\nabla}(\sigma,v_{\mathrm{eval}},v_{\mathrm{cut}})= \Omega^\nabla_{v_{\mathrm{eval}}}(v_{\mathrm{eval}}-v_{\mathrm{cut}},v_{\mathrm{eval}}-w)  + \mathcal{O}(h^3).
			\end{align}
			We will confirm in our section describing our numerical tests that the discrete curvature, with subsequent pre- and post-composition, converges indeed to the integral of the curvature $2-$form in a center-based parallel-propagated frame field with an error decay of $\mathcal{O}(h^4)$.
			
			\begin{figure}[t]
				\centering
				
                \includegraphics[width = 0.7\linewidth]{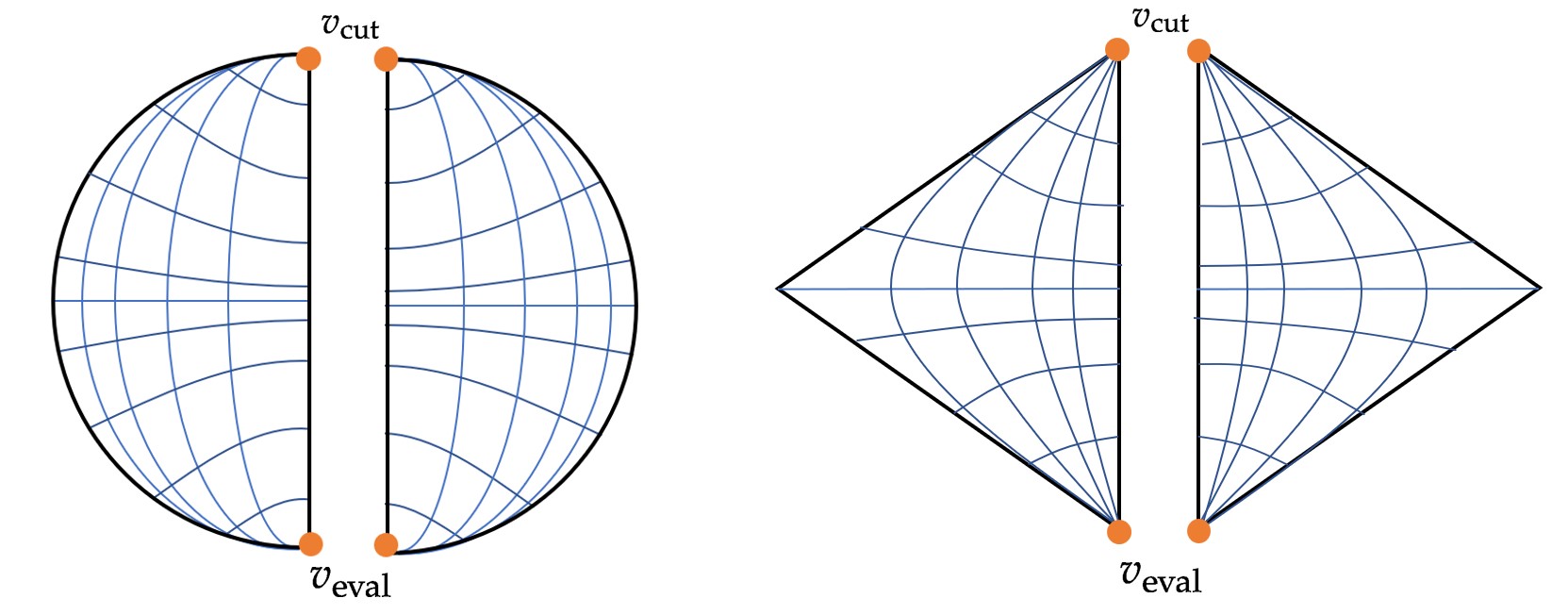}\vspace*{-1mm}
				\caption{\textbf{Summing Curvatures.} For two triangles using the same two prongs ($v_{\mathrm{cut}},v_{\mathrm{eval}}$), the curvature evaluated over their union will be the sum of the curvatures with the same two prongs because the parallel-propagated frames derived from these two prongs properly align. \vspace*{-3mm}}
				\label{fig:sum-curvature-two-chains}
			\end{figure}			
			
			\paragraph{PPF-induced Discrete Covariant Exterior Derivative for the Connection One Form.}
			When computing the discrete covariant exterior derivative of a vector-valued form, only an evaluation fiber suffices for the definition of a parallel-propagated frame field. However, when dealing with the exterior covariant derivative of the connection $1-$form, it becomes necessary to specify both a cut and evaluation fibers since the result is a curvature, i.e., a $(1,1)-$tensor in this particular case.
			Upon specifying these parameters, we obtain an induced parallel propagated frame field, as illustrated in Fig.~\ref{fig:retraction}. This reduction leads to the boundary integral:
			\[
			\int_{C}R {\Omega}^{\!\nabla} R^{-1} =\int_{C}\widetilde{\Omega}^{\!\nabla} = \int_{C} {d}\Tilde{\omega} + \Tilde{\omega}\wedge\Tilde{\omega} = \int_{\partial C}\Tilde{\omega}.
			\]
			
			To evaluate the Lie algebra integral, we can treat the columns of the connection $1-$form as three independent vector valued $1-$forms. This motivates the definition of the PPF-induced exterior covariant derivative, evaluated at $v_{\mathrm{eval}}$ and cut at $v_{\mathrm{cut}}$:
			\[
			\mathfrak{d}^{\boldsymbol{\nabla}}\boldsymbol{\omega}([v_{\mathrm{eval}},w,v_{\mathrm{cut}}],v_{\mathrm{eval}},v_{\mathrm{cut}}) := \boldsymbol{\omega}_{v_{\mathrm{eval}},w} + \mathcal{R}_{w,v_{\mathrm{eval}}}\boldsymbol{\omega}_{w,v_{\mathrm{cut}}} - \boldsymbol{\omega}_{v_{\mathrm{eval}},v_{\mathrm{cut}}},
			\]
			for a given discrete connection $ \boldsymbol{\nabla}$ with discrete connection form $ \boldsymbol{\omega }$.

			Note that this PPF-induced discrete covariant exterior derivative shares a resemblance with the vector-valued one, akin to the smooth case: by regarding the matrix of the discrete connection $1-$form as a set of $r$ vector-valued differential forms, the PPF-induced discrete covariant exterior derivative manifests as a vector-valued discrete exterior covariant derivative on these vector components. The case of the connection $1-$form is, however, special: the covariant exterior derivative of the connection $1-$form is not a Lie-algebra valued quantity anymore, but a $(1,1)-$tensor valued $2-$form instead, necessitating in the discrete case the addition of a choice of ``cut fiber''.
			Using the definition of the discrete connection $1-$form, see Eq.~\eqref{eq:discOneForm}, we obtain that indeed:
			\begin{align}
				\!\!\!\!&\mathfrak{d}^{\boldsymbol{\nabla}}\boldsymbol{\omega}([v_{\mathrm{eval}},w,v_{\mathrm{cut}}],v_{\mathrm{eval}},v_{\mathrm{cut}})\\
    &= (\mathcal{R}_{v_{\mathrm{eval}},w} - \mathrm{Id}) +\mathcal{R}_{v_{\mathrm{eval}},w} (\mathcal{R}_{w,v_{\mathrm{cut}}} - \mathrm{Id}) - (\mathcal{R}_{v_{\mathrm{eval}},v_{\mathrm{cut}}} - \mathrm{Id}) \nonumber \\
				&=\mathcal{R}_{v_\mathrm{eval},w}\mathcal{R}_{w,\mathrm{cut}} - \mathcal{R}_{{v_{\mathrm{eval}}},v_{\mathrm{cut}} } = \boldsymbol{\Omega^\nabla}([v_{\mathrm{eval}},w,v_{\mathrm{cut}}],v_{\mathrm{eval}},v_{\mathrm{cut}}),
				\label{eq:frakomega=Omega}
			\end{align}
			justifying, a posteriori, our definition of discrete curvature.

	\section{Properties and Convergence of our Discrete Operators}\label{sec:PropAndConv}

In the previous sections, we introduced a discrete calculus for discrete bundle-valued forms in a principled manner, based on parallel-propagated frames. We now shift our focus towards discussing the resulting properties of our discrete operators, before analyzing their convergence under refinement. We will then provide quantitative evidence through numerical tests to verify our claims, both in terms of properties and of convergence order. 

\subsection{Properties of the Discrete Exterior Derivative}

The discrete covariant exterior derivative $\bolddnab$ introduced in Def.~\ref{def:ext_der_alternation_group} has several intrinsic properties, and as it differs from the operator introduced by~\cite{Hirani_Bianchi} mostly through a post alternation, it also inherits combinatorial properties proven in~\cite{Hirani_Bianchi}. We review them next.

\paragraph{Naturality of $\bolddnab$.} 
As mentioned in Eq.~\eqref{eq:pullback-commutes-with-d-nabla-smooth}, it is an important result in the theory of smooth exterior calculus that the pullback commutes with the exterior derivative. For the discrete operator $\bolddnab$, a similar result holds true due to a lemma proven in~\cite{Hirani_Bianchi}.

\begin{lemma}[Pullback commutes with $\mathfrak{d}^{\boldsymbol{\nabla}}$, from~\cite{Hirani_Bianchi}]\label{pull_back_lemma}
	Given a simplicial map $f\colon M\!\to\!N$ between two discrete manifolds $M$ and $N$, and a discrete vector bundle with connection $(\bE,\boldsymbol{\nabla})$ over $N$, let $\boldsymbol{\alpha}$ be an $\bE-$valued discrete $\ell-$form. For a $(\ell\!+\!1)-$simplex $\sigma = [v_0,\ldots,v_{\ell+1}]$ of $M$, one has:
	\[ f^\ast(\mathfrak{d}^{\boldsymbol{\nabla}}\boldsymbol{\alpha})(\sigma,{v_0}) =\mathfrak{d}^{f^\ast\boldsymbol{\nabla}} f^\ast\boldsymbol{\alpha}(f(\sigma),{f(v_0)})\]  	
\end{lemma}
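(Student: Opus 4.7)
The plan is to verify this identity by expanding both sides with the explicit formula from Def.~\ref{def:ppf-derivative} and matching them term by term, splitting into two cases depending on whether $f$ is injective on the vertices of $\sigma$. The first case is essentially bookkeeping through the definitions of pullback form and pullback connection; the second (degenerate) case is where the real work lies.

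First I would unpack the LHS: by the definition of the pullback of an $\bE$-valued form applied to the $(\ell{+}2)$-simplex $\sigma$,
\[
f^\ast(\mathfrak{d}^{\boldsymbol{\nabla}}\boldsymbol{\alpha})(\sigma,v_0) \;=\; \mathfrak{d}^{\boldsymbol{\nabla}}\boldsymbol{\alpha}(f(\sigma),f(v_0))\quad\text{if } f(\sigma)\in\mathcal{N}^{\ell+1},
\]
and $0$ otherwise. Then I would expand the RHS $\mathfrak{d}^{f^\ast\boldsymbol{\nabla}}(f^\ast\boldsymbol{\alpha})(\sigma,v_0)$ using Def.~\ref{def:ppf-derivative} as
\begin{align*}
\mathfrak{d}^{f^\ast\boldsymbol{\nabla}}(f^\ast\boldsymbol{\alpha})(\sigma,v_0) &= \mathcal{R}^{f^\ast\boldsymbol{\nabla}}_{v_0 v_1}\,(f^\ast\boldsymbol{\alpha})([v_1,\ldots,v_{\ell+1}],v_1)\\
&\quad + \sum_{i=1}^{\ell+1}(-1)^i (f^\ast\boldsymbol{\alpha})([v_0,\ldots,\hat{v}_i,\ldots,v_{\ell+1}],v_0),
\end{align*}
and compare term by term.

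In the non-degenerate case where $f$ is injective on the vertices of $\sigma$, every face $[v_0,\ldots,\hat{v}_i,\ldots,v_{\ell+1}]$ maps to a non-degenerate simplex of $N$, so by the definition of the pullback form each evaluation $(f^\ast\boldsymbol{\alpha})(\cdot,\cdot)$ equals the corresponding evaluation of $\boldsymbol{\alpha}$ on the image. Moreover, by the definition of the pullback connection, $\mathcal{R}^{f^\ast\boldsymbol{\nabla}}_{v_0 v_1}=\mathcal{R}_{f(v_0)f(v_1)}$. Substituting yields precisely $\mathfrak{d}^{\boldsymbol{\nabla}}\boldsymbol{\alpha}(f(\sigma),f(v_0))$, matching the LHS.

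The main obstacle is the degenerate case: there exist $p<q$ with $f(v_p)=f(v_q)$, so the LHS is $0$ and I must show the RHS also vanishes. The key observation is that any face $[v_0,\ldots,\hat{v}_i,\ldots,v_{\ell+1}]$ with $i\notin\{p,q\}$ still contains both $v_p$ and $v_q$, hence is degenerate under $f$, so the corresponding $(f^\ast\boldsymbol{\alpha})$ term vanishes. Only the terms indexed by $i=p$ and $i=q$ can survive in the sum (and, when $p=0$, also the leading transport term). I would then show that the surviving contributions cancel by combining two ingredients: (i)~the antisymmetry property of $\boldsymbol{\alpha}$ under simplex reorientation from Def.~\ref{def:discreteForm}, which lets one rewrite the face obtained by removing $v_q$ as $\pm$ the face obtained by removing $v_p$ (after permuting $v_p$ and $v_q$'s roles in the remaining list), and (ii)~the rule $\mathcal{R}^{f^\ast\boldsymbol{\nabla}}_{v_iv_j}=\mathrm{Id}$ whenever $f(v_i)=f(v_j)$, which is needed to handle the leading transport in the sub-case $p=0$. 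I would split into the sub-cases $p=0$ and $p\ge 1$ to keep the sign bookkeeping clean: in the sub-case $p\ge 1$ the two surviving sum-terms acquire opposite signs $(-1)^p$ vs.\ $(-1)^q$ but, after the reorientation, agree in absolute value, hence cancel; in the sub-case $p=0$, the $\mathrm{Id}$ rule collapses $\mathcal{R}^{f^\ast\boldsymbol{\nabla}}_{v_0v_1}$ so that the leading term is itself an evaluation of $f^\ast\boldsymbol{\alpha}$ on a face, which pairs with the $i=q$ face via the same antisymmetry argument.

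Once the degenerate case is disposed of, the lemma follows immediately from the case analysis; the non-injective combinatorial cancellation is the only nontrivial step.
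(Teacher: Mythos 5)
The paper does not actually supply a proof of this lemma --- it is imported verbatim from~\cite{Hirani_Bianchi} --- so your argument has to stand on its own. Your overall strategy (expand both sides via Def.~\ref{def:ppf-derivative}, dispose of the injective case by direct substitution of the pullback definitions, and show that the surviving terms cancel when $f$ identifies two vertices) is the right one, and two of your three degenerate sub-cases are sound: for $p\ge 1$ the two surviving terms $(-1)^p(f^\ast\boldsymbol{\alpha})(\sigma_{v_p},v_0)$ and $(-1)^q(f^\ast\boldsymbol{\alpha})(\sigma_{v_q},v_0)$ are anchored at the same fiber $\bE_{f(v_0)}$ and cancel via the reordering sign $(-1)^{q-1-p}$; for $p=0$, $q=1$ the pullback-connection rule $\mathcal{R}^{f^\ast\boldsymbol{\nabla}}_{v_0v_1}=\mathrm{Id}$ does apply and the leading term pairs with the $i=1$ term.

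The gap is the sub-case $p=0$, $q\ge 2$. There $f(v_0)=f(v_q)$ but $f(v_0)\ne f(v_1)$, so your ``$\mathrm{Id}$ rule'' does not fire: $\mathcal{R}^{f^\ast\boldsymbol{\nabla}}_{v_0v_1}=\mathcal{R}_{f(v_0)f(v_1)}$ is a genuine transport, and the two surviving contributions --- the leading term, anchored at $\bE_{f(v_1)}$, and the $i=q$ term, anchored at $\bE_{f(v_0)}$ --- live a priori in different fibers before transport and are evaluated at \emph{different corners} of the same image simplex. The orientation-reversal antisymmetry of Def.~\ref{def:discreteForm} keeps the evaluation vertex fixed, so it cannot pair them. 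Concretely, for $\ell=1$, $\sigma=[v_0,v_1,v_2]$ with $f(v_0)=f(v_2)=w$ and $f(v_1)=u$, the right-hand side reduces to $\boldsymbol{\alpha}([w,u],w)-\mathcal{R}_{wu}\,\boldsymbol{\alpha}([w,u],u)$, which does not vanish for an arbitrary discrete form in the sense of Def.~\ref{def:discreteForm}, since the corner values at $w$ and at $u$ are unconstrained independent data there. Closing this case requires the additional compatibility $\boldsymbol{\alpha}(\tau,v)=\mathcal{R}_{vw}\,\boldsymbol{\alpha}(\tau,w)$ between corner values of one simplex --- precisely the convention the paper adopts only later, in Sec.~\ref{sec:revisitingDisc}, when it stores a single value per simplex and recovers the other corners by parallel transport (and which is the convention under which the cited result of~\cite{Hirani_Bianchi} is proved). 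You need to either invoke that compatibility explicitly as a hypothesis or restrict the class of forms; as written, the cancellation you assert in this sub-case does not follow from the definitions you use.
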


\begin{corollary}[Naturality of $\bolddnab$]
	Given a simplicial map $f\colon M\!\to\!N$ between two discrete manifolds $M$ and $N$, and a discrete vector bundle with connection $(\bE,\boldsymbol{\nabla})$ over $N$, let $\alpha$ be an $\bE-$valued discrete $\ell-$form.
	For a $(\ell\!+\!1)-$simplex $\sigma = [v_0,\ldots,v_{\ell+1}]$ of $M$, one has: 
	\[ f^\ast(\bolddnab\alpha)(\sigma,{v_0}) =  \boldsymbol{d}^{f^\ast\boldsymbol{\nabla}} f^\ast\alpha(f(\sigma),{f(v_0)})\]
\end{corollary}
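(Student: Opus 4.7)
The plan is to reduce the statement to a straightforward composition of two naturality results: the lemma just stated (pullback commutes with the sided operator $\mathfrak{d}^{\boldsymbol{\nabla}}$), and the observation from Remark~\ref{rem:Alternation-not-a-projector} that the alternation operator commutes with pullback under simplicial maps. By Definition~\ref{def:ext_der_alternation_group} we have $\bolddnab = \mathrm{Alt}^{\boldsymbol{\nabla}} \circ \mathfrak{d}^{\boldsymbol{\nabla}}$, so the corollary is simply the composition of these two facts.

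Concretely, I would proceed as follows. Fix a simplicial map $f \colon M \to N$, an $(\ell\!+\!1)$-simplex $\sigma = [v_0,\ldots,v_{\ell+1}]$ in $M$, and a discrete $\bE$-valued $\ell$-form $\alpha$ on $N$. Starting from the left-hand side, unfold the definition:
\[
f^\ast(\bolddnab \alpha)(\sigma,v_0) = f^\ast\!\bigl( \mathrm{Alt}^{\boldsymbol{\nabla}}\, \mathfrak{d}^{\boldsymbol{\nabla}} \alpha \bigr)(\sigma,v_0) = \mathrm{Alt}^{\boldsymbol{\nabla}}\bigl( \mathfrak{d}^{\boldsymbol{\nabla}} \alpha\bigr)\bigl(f(\sigma),f(v_0)\bigr),
\]
where the last equality is just the definition of the pullback of a discrete bundle-valued form (applied here to the $(1,0)$-tensor valued form $\mathfrak{d}^{\boldsymbol{\nabla}}\alpha$). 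I would then invoke the commutativity of the alternation operator with pullback stated in Remark~\ref{rem:Alternation-not-a-projector}, rewriting the right-hand side as $\mathrm{Alt}^{f^\ast\boldsymbol{\nabla}}\bigl( f^\ast (\mathfrak{d}^{\boldsymbol{\nabla}} \alpha) \bigr)(\sigma,v_0)$. Finally, I would apply Lemma~\ref{pull_back_lemma} inside the alternation to replace $f^\ast(\mathfrak{d}^{\boldsymbol{\nabla}} \alpha)$ by $\mathfrak{d}^{f^\ast\boldsymbol{\nabla}}(f^\ast \alpha)$, obtaining
\[
\mathrm{Alt}^{f^\ast\boldsymbol{\nabla}}\bigl( \mathfrak{d}^{f^\ast\boldsymbol{\nabla}}(f^\ast\alpha) \bigr)(\sigma,v_0) = \boldsymbol{d}^{f^\ast\boldsymbol{\nabla}}(f^\ast\alpha)(f(\sigma),f(v_0)),
\]
which is exactly the right-hand side of the corollary.

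The only potentially delicate step is ensuring that the naturality of $\mathrm{Alt}$ holds without subtlety when $f$ is not injective on vertices: in Definition~\ref{def:alternation_vector_valued_forms} the averaging is over permutations of the $(\ell\!+\!1)$ vertices of $\sigma$, so when $f$ collapses vertices, many terms in $f^\ast \mathrm{Alt}^{\boldsymbol{\nabla}}(\boldsymbol{\alpha})$ become degenerate (pulled back to zero). I would verify that under $f$, each permutation term on the $M$-side maps bijectively to the corresponding permutation term on the $N$-side, with the degenerate permutations on both sides producing zero because the pullback of a form on a degenerate simplex vanishes and because the edge transports along collapsed edges become the identity. This is the same bookkeeping already implicit in the proof of Lemma~\ref{pull_back_lemma} in~\cite{Hirani_Bianchi}, so no new machinery is required. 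The corollary then follows immediately from the two naturality statements with no further computation.
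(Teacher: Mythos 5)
Your proposal is correct and follows exactly the paper's own argument: the corollary is obtained by composing Lemma~\ref{pull_back_lemma} (pullback commutes with $\mathfrak{d}^{\boldsymbol{\nabla}}$) with the commutation of the alternation operator with pullback noted in Remark~\ref{rem:Alternation-not-a-projector}. Your additional bookkeeping for non-injective simplicial maps is a welcome sanity check but does not change the route.
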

\begin{proof}
	This is a direct consequence of the previous lemma and the fact that the pullback commutes with the alternation operator as mentioned in Rem.~\ref{rem:Alternation-not-a-projector}. 
\end{proof}

\paragraph{Antisymmetry of $\bolddnab\!$.}

In the scalar-valued case, discrete differential forms are skew-symmetric maps, in the sense that permuting the vertices of an evaluation simplex leads to multiplication with the sign of the permutation. In the bundle-valued case, discrete differential forms are defined as abstract maps mapping into a vertex fiber of a given cell. Therefore, our operator is antisymmetric \emph{as long as we keep the evaluation fiber fixed}, since any switch of two vertices will result in a sign flip in the evaluation of $\bolddnab$ due to the alternation within its definition.

\paragraph{Differential Bianchi identity.} The celebrated second Bianchi identity reviewed in Eq.~\eqref{eq:diffBianchi} holds exactly in the discrete setting. 

\begin{proposition}[Differential Bianchi identity]
	Let $\bE$ be a discrete vector bundle with connection $\boldsymbol{\nabla}$ over a discrete manifold $M$. For any $3-$simplex  $\sigma= [v_0,v_1,v_2,v_3]\subset M$. one has:
	\begin{equation}
		\label{eq:differential-Bianchi-Identity}
		\bolddnab\boldsymbol{\Omega^\nabla}(\sigma,{v_0},{v_3}) = 0
	\end{equation}
\end{proposition}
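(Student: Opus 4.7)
The plan is to show that the differential Bianchi identity holds at the stronger level of the sided operator, namely that $\mathfrak{d}^{\boldsymbol{\nabla}}\boldsymbol{\Omega^\nabla}(\sigma,v_0,v_3) = 0$ already. Since $\bolddnab = \mathrm{Alt}^{\boldsymbol{\nabla}}\circ\mathfrak{d}^{\boldsymbol{\nabla}}$ by Def.~\ref{def:ext_der_alternation_group}, and since the alternation operator is linear in the input $(1,1)$-tensor form (it is a signed sum of pre- and post-compositions with parallel transports), it maps zero to zero, so establishing vanishing before alternation suffices.

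First, I would specialize Def.~\ref{def:lie-algebra-ppf-derivative} to $\ell = 2$ and $\boldsymbol{\beta} = \boldsymbol{\Omega^\nabla}$, which produces exactly four summands indexed by the four triangular faces of $\sigma$:
\begin{align*}
\mathfrak{d}^{\boldsymbol{\nabla}}\boldsymbol{\Omega^\nabla}(\sigma,v_0,v_3) &= \mathcal{R}_{01}\,\boldsymbol{\Omega^\nabla}([v_1,v_2,v_3],v_1,v_3) - \boldsymbol{\Omega^\nabla}([v_0,v_2,v_3],v_0,v_3)\\
&\quad + \boldsymbol{\Omega^\nabla}([v_0,v_1,v_3],v_0,v_3) - \boldsymbol{\Omega^\nabla}([v_0,v_1,v_2],v_0,v_2)\,\mathcal{R}_{23}.
\end{align*}
The crucial structural point is that the definition of $\mathfrak{d}^{\boldsymbol{\nabla}}$ for two-prong forms is engineered so that the first three curvature terms have cut vertex $v_3$ (which is a vertex of those three triangles), while the fourth face $[v_0,v_1,v_2]$ does not contain $v_3$, requiring a post-composition by $\mathcal{R}_{23}$.

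Next, I would substitute the explicit expressions of each curvature from Def.~\ref{def:discrete-curvature}, using Eq.~\eqref{eq:curv02} for the three terms with cut vertex adjacent to the evaluation vertex via an intermediate vertex. Each curvature becomes a difference of two products of parallel transports, yielding eight signed monomials in the $\mathcal{R}_{ij}$'s. A direct inspection reveals that these eight monomials pair up: the composition $\mathcal{R}_{01}\mathcal{R}_{12}\mathcal{R}_{23}$ coming from the first term cancels with $\mathcal{R}_{01}\mathcal{R}_{12}\cdot\mathcal{R}_{23}$ from the fourth; the $\mathcal{R}_{01}\mathcal{R}_{13}$ term from the first cancels with the one from the third; the $\mathcal{R}_{02}\mathcal{R}_{23}$ and $\mathcal{R}_{03}$ terms cancel within the second and third. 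The sum telescopes to $0$.

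This is essentially a purely combinatorial verification, and I do not anticipate any serious obstacle. The only care needed is in bookkeeping the cut-vertex conventions (ensuring we apply the right form of Eq.~\eqref{eq:curv01}--\eqref{eq:curv00} to each face) and the signs dictated by the $(-1)^i$ pattern together with the terminal $(-1)^{\ell+1}$ post-composition. Once vanishing before alternation is established, I would conclude by invoking the linearity of $\mathrm{Alt}^{\boldsymbol{\nabla}}$ to obtain $\bolddnab\boldsymbol{\Omega^\nabla}(\sigma,v_0,v_3) = 0$, completing the proof.
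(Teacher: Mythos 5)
Your proof is correct and follows essentially the same route as the paper: establish that the sided operator already annihilates the curvature, i.e. $\mathfrak{d}^{\boldsymbol{\nabla}}\boldsymbol{\Omega^\nabla}(\sigma,v_0,v_3)=0$, and then conclude by linearity of $\mathrm{Alt}^{\boldsymbol{\nabla}}$. The only difference is that the paper cites \cite{Hirani_Bianchi} for the sided vanishing, whereas you carry out the (correct) telescoping cancellation of the eight parallel-transport monomials explicitly.
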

\begin{proof}
	As proven in~\cite{Hirani_Bianchi}, we know that $\mathfrak{d}^{\boldsymbol{\nabla}}\boldsymbol{\Omega^\nabla}(\sigma,{v_0},{v_3}) = 0$. Therefore,\vspace*{-1mm} 
	\[\bolddnab\boldsymbol{\Omega^\nabla}(\sigma,{v_0},{v_3}) = \mathrm{Alt}^{\boldsymbol{\nabla}}\mathfrak{d}^{\boldsymbol{\nabla}}\boldsymbol{\Omega^\nabla}(\sigma,{v_0},{v_{3}}) = 0. \]
\end{proof}

\paragraph{Algebraic Bianchi identity.}
The first Bianchi identity, reviewed in Eq.~\eqref{eq:algebraic-bianchi-identity}, also holds in our discrete calculus. While~\cite{Hirani_Bianchi} showed that their covariant exterior derivative satisfies
$$\mathfrak{d}^{\boldsymbol{\nabla}}\mathfrak{d}^{\boldsymbol{\nabla}}\boldsymbol{\alpha}([v_0,\ldots,v_{\ell+2}],v_0) \!=\! \boldsymbol{\Omega^\nabla}([v_0,v_1,v_2],v_0,v_2)\ \boldsymbol{\alpha}([v_2,\ldots,v_{\ell+2}],v_2)$$ for any vector-valued form $\boldsymbol{\alpha}$, our discrete covariant exterior derivative satisfies the algebraic Bianchi identity for an implicitly-defined combinatorial wedge product.

\begin{theoremanddefinition}[Algebraic Bianchi Identity]
	\label{thm:algebraic-bianchi-identity}
	Let $M$ be a discrete manifold with a discrete bundle $\bE$ and connection $\boldsymbol{\nabla}$. Further let $\boldsymbol{\alpha}$ be a discrete $\bE-$valued $\ell-$form and $\sigma=[v_0,\ldots,v_{\ell+2}]$ a $(\ell+2)-$simplex. In this case there exists a set $K\subseteq C^2( \sigma )\times C^\ell(\sigma)$ consisting of $2-$ and $\ell-$subcells of $\sigma$ such that for all $(m,\kappa)\in K$ with a shared vertex $w_{m,\kappa}$ we have  \vspace{-1mm}
	\begin{align*}
		\boldsymbol{d^\nabla d^\nabla\alpha}(\sigma,v_0)
		 &= \frac{1}{(\ell+3)! \, (\ell+2)!}\sum_{({m},\kappa)\in K} \boldsymbol{\Omega^\nabla}(f,v_0,w_{{m},\kappa}) \ \boldsymbol{\alpha}(\kappa,w_{{m},\kappa}) \\
		& =: \boldsymbol{\Omega^\nabla\wedge\alpha}(s,v_0),\vspace{-1mm}
	\end{align*}
 
	where the sum defines implicitly a \emph{wedge product} between the curvature $2-$form and $\boldsymbol{\alpha}$, reminiscent of scalar-based wedge products defined through cup products~\cite{Hirani2003}. Similarly, for any $(1,1)-$tensor-valued $\ell-$form $\boldsymbol{\beta}$, two consecutive applications of our discrete covariant exterior derivative $\boldsymbol{d^\nabla}$ yield implicitly a discrete commutator:
	\[
	\boldsymbol{d^\nabla d^\nabla\beta} = \boldsymbol{[\Omega^\nabla\wedge\beta]}.
	\]
\end{theoremanddefinition}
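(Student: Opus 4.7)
My plan is to take the Hirani-style identity
\[
\mathfrak{d}^{\boldsymbol{\nabla}}\mathfrak{d}^{\boldsymbol{\nabla}}\boldsymbol{\alpha}([v_0,\ldots,v_{\ell+2}],v_0) = \boldsymbol{\Omega^\nabla}([v_0,v_1,v_2],v_0,v_2)\,\boldsymbol{\alpha}([v_2,\ldots,v_{\ell+2}],v_2),
\]
quoted from~\cite{Hirani_Bianchi} just above the theorem, as the basic input, and then push it through the two alternation operators that distinguish our $\bolddnab$ from $\mathfrak{d}^{\boldsymbol{\nabla}}$. I view the statement as partly definitional: once the expansion of $\bolddnab\bolddnab\boldsymbol{\alpha}$ has been shown to be a sum of $\boldsymbol{\Omega^\nabla}\cdot\boldsymbol{\alpha}$ terms indexed over pairs of subcells, declaring the right-hand side to be $\boldsymbol{\Omega^\nabla}\wedge\boldsymbol{\alpha}$ is essentially free and delivers the implicit wedge product named in the theorem.

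First I would unroll $\bolddnab\bolddnab\boldsymbol{\alpha} = \mathrm{Alt}^{\boldsymbol{\nabla}}\mathfrak{d}^{\boldsymbol{\nabla}}\mathrm{Alt}^{\boldsymbol{\nabla}}\mathfrak{d}^{\boldsymbol{\nabla}}\boldsymbol{\alpha}$ and expand both alternations as signed averages over the relevant symmetric groups. The outer $\mathrm{Alt}^{\boldsymbol{\nabla}}$ averages over the $(\ell+3)!$ permutations of the vertices of $\sigma$, while the inner one (acting on the intermediate $(\ell+1)$-form $\mathfrak{d}^{\boldsymbol{\nabla}}\boldsymbol{\alpha}$) averages over the $(\ell+2)!$ permutations of the vertices of each $(\ell+1)$-subsimplex; together these produce the prefactor $\frac{1}{(\ell+3)!(\ell+2)!}$ announced in the theorem without any further work. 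I would then use the naturality of $\mathfrak{d}^{\boldsymbol{\nabla}}$ (Lemma~\ref{pull_back_lemma}) applied to the simplicial self-maps of $\sigma$ realizing the vertex permutations to commute the inner alternation past the outer $\mathfrak{d}^{\boldsymbol{\nabla}}$, so that each term in the double sum can be reinterpreted as $\mathfrak{d}^{\boldsymbol{\nabla}}\mathfrak{d}^{\boldsymbol{\nabla}}\boldsymbol{\alpha}$ evaluated on a relabeled copy of $\sigma$, modulated by a sign $\mathrm{sgn}(\pi)\mathrm{sgn}(\pi')$ and the appropriate parallel-transport factors.

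Invoking the Hirani identity on each rearranged term would then yield a product $\boldsymbol{\Omega^\nabla}(m,v_0,w_{m,\kappa})\,\boldsymbol{\alpha}(\kappa,w_{m,\kappa})$ in which $m$ is the 2-subcell spanned by the first three vertices of the permuted simplex and $\kappa$ is the $\ell$-subcell spanned by its last $\ell+1$ vertices; by construction these always meet at the middle vertex, which supplies $w_{m,\kappa}$. Taking $K$ to be exactly this set of $(m,\kappa)$ pairs produced by the expansion, and \emph{defining} $\boldsymbol{\Omega^\nabla}\wedge\boldsymbol{\alpha}$ to be the resulting sum, concludes the vector-valued case. For the $(1,1)$-tensor statement I would rerun the same argument using $\mathfrak{d}^{\boldsymbol{\nabla}}$ in the sense of Def.~\ref{def:lie-algebra-ppf-derivative}; because that operator has both a left-boundary term $\mathcal{R}_{01}\boldsymbol{\beta}$ and a right-boundary term $\boldsymbol{\beta}\mathcal{R}_{\ell,\ell+1}$, two successive applications now generate both a pre-curvature contribution $\boldsymbol{\Omega^\nabla}\wedge\boldsymbol{\beta}$ and a post-curvature contribution $\boldsymbol{\beta}\wedge\boldsymbol{\Omega^\nabla}$, whose relative $(-1)^{\ell+1}$ sign combines them into the commutator $[\boldsymbol{\Omega^\nabla}\wedge\boldsymbol{\beta}]$.

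The main obstacle will be the shuttle step: the inner alternation is not a projector (Remark~\ref{rem:Alternation-not-a-projector}), and $\mathfrak{d}^{\boldsymbol{\nabla}}$ privileges the initial vertex $v_0$ of each simplex, so $\mathrm{Alt}^{\boldsymbol{\nabla}}$ and $\mathfrak{d}^{\boldsymbol{\nabla}}$ do not commute in any naive sense. Making the rearrangement rigorous will require careful bookkeeping of the parallel-transport prefactors $\mathcal{R}_{v_0,v_{\pi(0)}}$ coming from both alternations, so that after all reindexings and sign manipulations every surviving term is a genuine $\mathfrak{d}^{\boldsymbol{\nabla}}\mathfrak{d}^{\boldsymbol{\nabla}}$ evaluation to which the Hirani identity can be applied verbatim; the combinatorics of matching the resulting permuted triples $(v_{\pi(0)},v_{\pi(1)},v_{\pi(2)})$ to actual 2-subcells $m$ of $\sigma$ (each arising from $6$ orientation-consistent permutations) is the delicate part of the accounting.
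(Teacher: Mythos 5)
Your overall target --- expand $\boldsymbol{d^\nabla d^\nabla} = \mathrm{Alt}^{\boldsymbol{\nabla}}\mathfrak{d}^{\boldsymbol{\nabla}}\mathrm{Alt}^{\boldsymbol{\nabla}}\mathfrak{d}^{\boldsymbol{\nabla}}$ and recognize the result as a signed sum of curvature-times-form terms over pairs of subcells --- is the right one, and your accounting for the prefactor $\frac{1}{(\ell+3)!(\ell+2)!}$ is correct. But the central mechanism you propose has a genuine gap: you want to commute the inner $\mathrm{Alt}^{\boldsymbol{\nabla}}$ past the outer $\mathfrak{d}^{\boldsymbol{\nabla}}$ so that every term becomes a bona fide $\mathfrak{d}^{\boldsymbol{\nabla}}\mathfrak{d}^{\boldsymbol{\nabla}}\boldsymbol{\alpha}$ evaluated on a relabeled copy of $\sigma$, to which the quoted identity from~\cite{Hirani_Bianchi} applies verbatim. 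Lemma~\ref{pull_back_lemma} does not give you this: it states that $\mathfrak{d}^{\boldsymbol{\nabla}}$ commutes with \emph{pullback along a simplicial map}, not that $\mathfrak{d}^{\boldsymbol{\nabla}}$ commutes with the connection-dependent averaging $\mathrm{Alt}^{\boldsymbol{\nabla}}$. Indeed the two cannot be interchanged: expanding $\mathfrak{d}^{\boldsymbol{\nabla}}\mathrm{Alt}^{\boldsymbol{\nabla}}\mathfrak{d}^{\boldsymbol{\nabla}}\boldsymbol{\alpha}(\sigma,v_0)$ places transport factors $\mathcal{R}_{v_0,v_{\pi(0)}}$ \emph{between} the two applications of $\mathfrak{d}^{\boldsymbol{\nabla}}$, and the permutations $\pi$ involved act on the $(\ell+2)$-element vertex sets of the individual faces $\sigma_{v_i}$, not on the full vertex set of $\sigma$; a genuine $\mathfrak{d}^{\boldsymbol{\nabla}}\mathfrak{d}^{\boldsymbol{\nabla}}\boldsymbol{\alpha}(\rho(\sigma),v_{\rho(0)})$ has neither feature. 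The expanded terms therefore do not regroup into complete $\mathfrak{d}^{\boldsymbol{\nabla}}\mathfrak{d}^{\boldsymbol{\nabla}}$ stencils on relabeled simplices, and the Hirani identity cannot be invoked as a black box. You flag this difficulty yourself in your last paragraph, but the proof stands or falls on resolving it, and the proposal does not.

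The paper's actual argument (Appendix~\ref{app:proof-restricted-skew-symmetry}) sidesteps the issue by working one level lower: it fully expands $\mathfrak{d}^{\boldsymbol{\nabla}}\,\mathrm{Alt}^{\boldsymbol{\nabla}}\,\mathfrak{d}^{\boldsymbol{\nabla}}\boldsymbol{\alpha}$ into individual terms $\pm\,\mathcal{R}\,\boldsymbol{\alpha}(\tau(\sigma^{ij}),\cdot)$ and shows directly that each doubly-reduced permuted subsimplex $\tau(\sigma^{ij})$ is hit exactly twice --- once by deleting $v_i$ first and then $v_j$ (total sign $(-1)^{i+j-2}$ times $\mathrm{sgn}(\tau)$ after the transpositions moving $v_j$ to the leading slot) and once in the opposite order (total sign $(-1)^{i+j-1}$ times $\mathrm{sgn}(\tau)$) --- with opposite signs and with two different parallel-transport paths back to the evaluation fiber. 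The difference of these two transports is, by Def.~\ref{def:discrete-curvature}, precisely a discrete curvature $\boldsymbol{\Omega^\nabla}(m,v_0,w_{m,\kappa})$ multiplying $\boldsymbol{\alpha}(\kappa,w_{m,\kappa})$; the set $K$ is read off from this pairing, and the final outer alternation only re-signs and re-transports, preserving the form. If you tried to make your commutation step rigorous you would in effect have to reprove exactly this pairing, so the direct expansion is the shorter path; the same remark applies to your (otherwise plausible) sketch of the $(1,1)$-tensor-valued commutator.
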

\begin{proof}
	For discrete $(1,0)-$tensor valued forms, see~\ref{app:proof-restricted-skew-symmetry}. The case of discrete $(1,1)-$tensor valued forms is similar --- but for an evaluation fiber $\smash{\bE_{v_0}}$ and a cut fiber $\bE_{v_{\ell+2}}$ one has: \vspace{-1mm}
	\begin{align*}
		\mathfrak{d}^{\boldsymbol{\nabla}}\mathfrak{d}^{\boldsymbol{\nabla}}\boldsymbol{\beta}([v_0,\ldots,v_{\ell+2}],v_0,v_{\ell+2})
		&= \boldsymbol{\Omega^\nabla}([v_0,v_1,v_2],v_0,v_2)\boldsymbol{\beta}([v_2,\ldots,v_{\ell+2}],v_2,v_{\ell+2})\\
  & \quad-\boldsymbol{\beta}([v_0,\ldots,v_{\ell}],v_0,v_\ell)\boldsymbol{\Omega^\nabla}([v_{\ell},v_{\ell+1},v_{\ell+2}],v_\ell,v_{\ell+2}),
	\end{align*}
	hence the resulting commutator.
\end{proof}
\begin{remark}
	In the realm of scalar-valued DEC, Kotiuga~\cite{Kotiuga:Limits} established theoretical limitations regarding the definition of a discrete scalar-valued wedge product. However, associativity of the wedge product is not an issue in our context, as there is no inherent product structure on the bundle; and graded anti-commutativity is irrelevant given the non-commutative nature of the underlying product for $(1,1)-$tensors, and vectors. We will demonstrate, in Sec.~\ref{sec:numVerif} that our discrete wedge product converges under refinement toward the smooth wedge product when applied to the proper discretization of a smooth form --- unlike the definition given in~\cite{Hirani_Bianchi} as they did not properly relate discrete and continuous bundle-valued forms, see Fig.~\ref{fig:different_realizations_torsion}.
\end{remark}

\begin{remark}
	\label{rem:curvature-properties-discrete}
	Given a connection $\boldsymbol{\nabla} \!=\! \{ \mathcal{R}_{ab}\}_{e_{ab}\in \mathcal{E}}$ and an arbitrary choice of frame field, the discrete connection $1-$form for the edge between vertices $a$ and $b$ is given by $\boldsymbol{\omega}_{ab} = {R}_{ab} - \mathrm{Id}$. 
	Furthermore, we saw in Eq.~\eqref{eq:frakomega=Omega} that
	\[\mathfrak{d}^{\boldsymbol{\nabla}}\boldsymbol{\omega}([abc],a,c)  
	=  \boldsymbol{\Omega^\nabla}([abc],a,c).\]
	Hence, with our alternation operator for $(1,1)-$tensor based forms and the stability of $\boldsymbol{\Omega^\nabla}$ under the $\operatorname{Alt}$ operator (see Rem.~\ref{rem:Omega-invariant-alt}), it holds that
	\begin{equation}
		\bolddnab\boldsymbol{\omega}([abc],a,c) = \boldsymbol{\Omega^\nabla}([abc],a,c), \label{eq:d-omega-becomes-Omega}
	\end{equation}
	in an exact sense. However it should be noted that the use of $\bolddnab$ above is slightly abusive: it is neither the discrete covariant exterior derivative of a $(1,0)-$tensor valued form, nor of a $(1,1)-$tensor valued form. Instead, it is made out of the composition of a PPF-induced covariant exterior derivative for $(1,0)-$tensor valued form applied to $\omega$, composed with the alternation operator for $(1,1)-$tensor valued forms. This \emph{hybrid} operator is specific to the connection $1-$form and reflects the special and local nature of the curvature form in the continuous case. 
	
	We should also mention here that our linearized version of the matrix logarithm used in the definition of the discrete connection $1-$form is crucial to ensure that this property holds (see Rem.~\ref{rem:omega-or-R-exact}). Finally, we proved that $\boldsymbol{d^\nabla d^\nabla} = \boldsymbol{\Omega^\nabla\wedge}$ for any vector-valued form, and one can check that indeed, for a discrete vector-valued $0-$form $\mathbf{z}$, we have:
	\begin{align*}
		\bolddnab \bolddnab \mathbf{z} ([abc], a) 
		&= \frac{1}{6}\Bigl((R_{ab}R_{bc} - R_{ac})z_c + R_{ab}(R_{bc}R_{ca}-R_{ba})z_{a} + R_{ac}(R_{ca}R_{ab}-R_{cb})z_b\\
		&\quad- (R_{ac}R_{cb}-R_{ab})z_b - R_{ab}(R_{ba}R_{ac}-R_{bc})z_c - R_{ac}(R_{cb}R_{ba}-R_{ca})z_a \Bigr).
	\end{align*}
	Note that this wedge product between the curvature $2-$ form and a vector-valued $0-$form does \emph{not} correspond to a simple pointwise product as in the continuous case: this property will only be valid in the limit of mesh refinement. 
\end{remark}

\subsection{Convergence analysis}
\label{sec:convergence-analysis}

In Sec.~\ref{sec:ppfIntro}, we introduced the use of parallel-propagated frame fields to simplify the high-order terms of the integral of the discrete covariant exterior derivative. In this section, we discuss how the order of approximation is affected by the location of the \emph{origin} of the PPF being used: indeed, if we integrate the exterior derivative in a center-based parallel-propagated frame field, we achieve an even higher accuracy order than using a corner-based PPF.\@
\begin{figure}[!htb]\vspace*{-6mm}
	\centering 
	\includegraphics[width=0.8\linewidth]{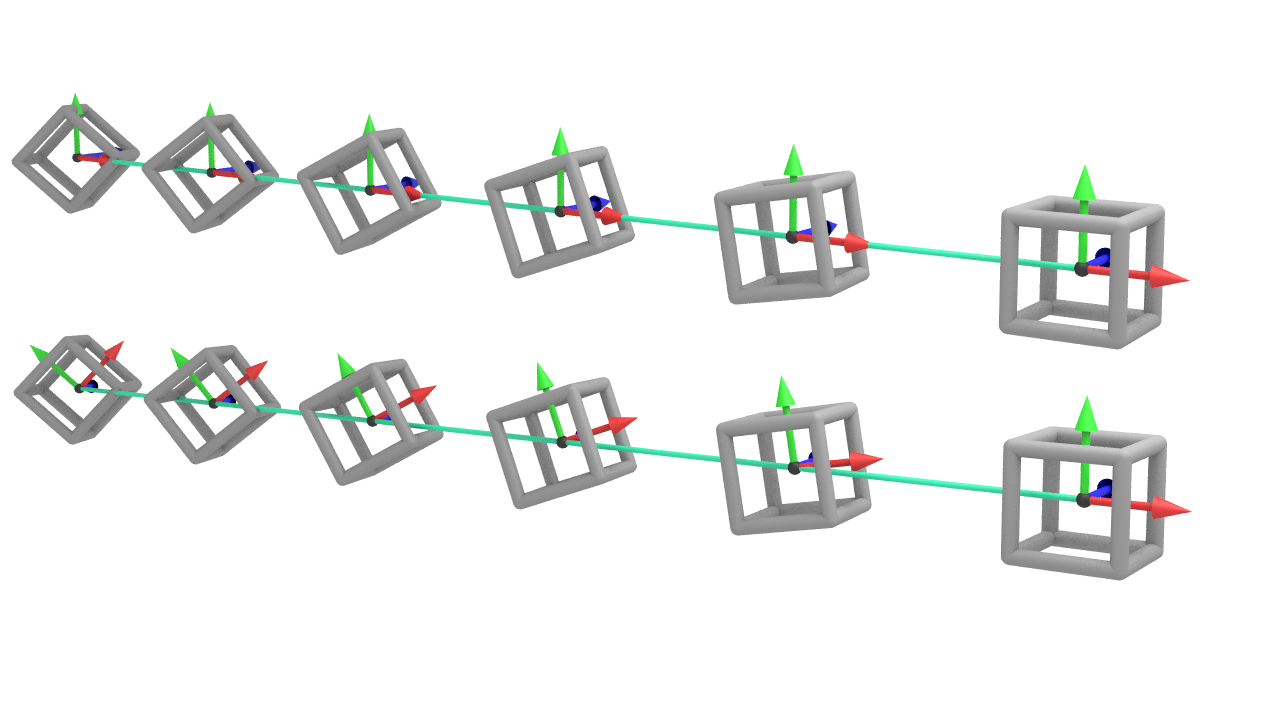}
	\vspace*{-12mm}
	\caption{\textbf{Parallel-propagated frames.} Given a connection, we can use the parallel propagated frame of $T_pM$ (bottom) for integration; integration in some non-parallel-propagated/non-geometric frame (like the one on top) does not lead to convergence.}
	\label{fig:compare_moving_frames}
\end{figure}

\begin{theorem}[Improved accuracy with center-based PPF]
	\label{thm:even-better-reduction-with-center-based-PPF}
	Let $ \pi :E \rightarrow M$ be a vector bundle with connection $\nabla$. Let $s=[v_0,\ldots,v_{\ell}]\!\subset\! M$ be a region in $M$ for which there exists a diffeomorphism to a $\ell-$simplex $\sigma$ where each point $v_i$ is mapped to an associated vertex $w_i$ of $\sigma$, and let us  call the ``center'' $c_s\!\in\!s$ the point of $s$ being mapped to the barycenter of $\sigma$. 
    For $\alpha\in \Omega^\ell(M,E)$ it holds that 
	\[\int_{s}{(d^\nabla\alpha)}^{\nabla,c_s} = \int_{\partial s} \alpha^{\nabla,c_s} + \mathcal{O}(h^{\ell+3})\]
	(compare with Eq.~\eqref{eq:order-d-nabla}).
\end{theorem}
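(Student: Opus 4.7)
The plan is to follow the template of the derivation leading to Eq.~\eqref{eq:order-d-nabla} but to exploit the extra symmetry provided by centering the PPF at the barycentric point $c_s$. I would begin by writing $(d^\nabla\alpha)^{\nabla,c_s} = d\alpha^{\nabla,c_s} + \omega^{\nabla,c_s}\wedge\alpha^{\nabla,c_s}$ via the local formula in Eq.~\eqref{eq:ext-cov-derivative-local}, then applying Stokes' theorem to the first summand so that the proof reduces to establishing the sharper bound
\[
\int_s \omega^{\nabla,c_s}\wedge\alpha^{\nabla,c_s} = \mathcal{O}(h^{\ell+3}),
\]
one order better than what was obtained for a vertex-based PPF.

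The next step is to exploit a key structural property of the center-based PPF: by construction $\omega^{\nabla,c_s}(c_s)=0$, and in fact $\omega^{\nabla,c_s}$ annihilates the radial directions of the retraction emanating from $c_s$. A Taylor expansion around $c_s$ then yields $\omega^{\nabla,c_s}(p) = L(p-c_s) + \mathcal{O}(|p-c_s|^2)$ for a linear operator $L$ (ultimately determined by the curvature at $c_s$), and pairing with $\alpha^{\nabla,c_s}(p) = \alpha^{\nabla,c_s}(c_s) + \mathcal{O}(h)$ isolates the leading contribution to the wedge integral as
\[
\bigl[L \wedge \alpha^{\nabla,c_s}(c_s)\bigr] \int_s (p-c_s)\, dV(p).
\]
This is the crucial place where the barycentric choice of $c_s$ pays off: since $c_s$ is, by definition, the image of the barycenter of the model simplex $\sigma$ under the defining diffeomorphism $\varphi$, the first moment $\int_s (p-c_s)\,dV(p)$ vanishes, and the remaining quadratic tails of $\omega^{\nabla,c_s}$ and $\alpha^{\nabla,c_s}$ contribute at worst $\mathcal{O}(h^2)\cdot\mathrm{vol}(s) = \mathcal{O}(h^{\ell+3})$, giving the claimed estimate.

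The main obstacle I anticipate is precisely this vanishing-moment argument: when $s$ is only diffeomorphic to a flat model simplex (and not identified with it affinely), the identity $\int_s (p-c_s)\,dV(p)=0$ holds only up to corrections coming from the nonlinearity of the parametrization $\varphi$. To handle this cleanly, I would pull the integral back to $\sigma$, expand both the displacement $\varphi(x)-\varphi(0)$ and the Jacobian $|\det D\varphi(x)|$ around the barycenter of $\sigma$, and check that the terms violating exact cancellation contribute at worst $\mathcal{O}(h)\cdot h\cdot\mathrm{vol}(s) = \mathcal{O}(h^{\ell+3})$. Everything else is routine Stokes together with the bounded-curvature Taylor estimate for $\omega^{\nabla,c_s}$.
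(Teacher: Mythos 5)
Your proposal is correct and follows essentially the same route as the paper's proof: split off the exact part via Stokes, Taylor-expand $\omega^{\nabla,c_s}$ and $\alpha^{\nabla,c_s}$ about $c_s$ (where $\omega^{\nabla,c_s}$ vanishes), and kill the leading linear term using the vanishing first moment $\int_s (p-c_s)\,dp=0$ afforded by the barycentric choice of $c_s$. Your extra remark about the non-affine parametrization (pulling back to $\sigma$ and expanding the Jacobian to confirm the residual moment is higher order) is a point the paper's proof passes over silently, but it does not change the argument's structure or the claimed $\mathcal{O}(h^{\ell+3})$ bound.
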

\begin{proof}
	It holds in the center-based PPF that
	\[\int_{s}(d^\nabla\alpha)^{\nabla,c_s} = \int_{\partial s} \alpha^{\nabla,c_s} + \int_{s} \omega^{\nabla,c_s}\wedge\alpha^{\nabla,c_s}.\]
	Additionally, one has \[\int_{s} \omega^{\nabla,c_s}\wedge\alpha^{{\nabla,c_s}} = \mathcal{O}(h^{\ell+3})\] since the integral of the linear part cancels out when the center verifies $\omega^{\nabla,{c_s}}(c_s)=0$. 
	To see this, we consider a Taylor expansion of $\omega^{\nabla,{c_s}}$ and $\alpha$. It holds for  $p\in s$
	\[\left(\omega^{\nabla,c_s}\right)_p =\underbrace{\left(\omega^{\nabla,c_s}\right)_{c_s}}_{ = 0} +\left(\omega^{\nabla,c_s}\right)'_{c_s} (p - c_s) + \mathcal{O}(h^2),\]
	where of $(\omega^{\nabla,c_s})'$ is to be understood as a derivative applied to the components of $\omega^{\nabla,c_s}$. Assuming sufficient smoothness, we get that $(\omega^{\nabla,c_s})'$ is still a smooth $1-$form. Similarly, we obtain
	\[\alpha^{\nabla,c_s}_p = \alpha^{\nabla,c_s}_{c_s} + (\alpha^{\nabla,c_s})'_{c_s} (p - c_s) + \mathcal{O}(h^{\ell+2}).\]
	Hence
	\begin{align*}
    \norm{\int_{s} \omega^{\nabla,c_s}\wedge\alpha^{\nabla,c_s}}&\leq \norm{\int_{p\in s}((\omega^{\nabla,c_s})_{c_s}'\wedge \alpha^{\nabla,c_s}_{c_s})(c_s - p) dp }\\
    &\ + \underbrace{\norm{\int_{s}((\omega^{\nabla,c_s})'_{c_s}\wedge(\alpha^{\nabla,c_s})'_{c_s})}}_{=\mathcal{O}(h^{\ell+1})} \cdot \mathcal{O}(h^{2}) + \mathcal{O}(h^{\ell+3}).
    \end{align*} 
	Since $(\omega^{\nabla,c_s})_{c_s}'\wedge \alpha^{\nabla,c_s}_{c_s}$ is constant over $s$, we get that
    
	\begin{equation}
		\label{eq:better-cancellation-with-ppf-center-of-mass}
		\int_{p\in s}((\omega^{\nabla,c_s})_{c_s}'\wedge (\alpha^{\nabla,c_s})_{c_s})(c_s - p) dp = 0, 
	\end{equation}
	leaving us with $\norm{\int_{s} \omega^{\nabla,c_s}\wedge\alpha^{\nabla,c_s}}= \mathcal{O}(h^{\ell+3})$.
\end{proof}

\begin{theorem}
	\label{thm:ppf-derivative-linear-decay}
	Let $ \pi :E \rightarrow M$ be a vector bundle with connection $\nabla$. Let $s=[v_0,\ldots,v_{\ell+1}]\!\subset\! M$ be a region in $M$ for which there exists a diffeomorphism to a $(\ell+1)-$simplex $\sigma$ where each point $v_i$ are mapped to an associated vertex $w_i$ of $\sigma$. Further let $\alpha\in \Omega^\ell(M,E)$ and $\boldsymbol{\alpha}$ be its discretization defined in Eq.~\eqref{eq:vertex-based-discretization}. In this case we have 
	\begin{equation}
		\label{eq:error-to-estimate-frak-d}
		R_{v_0,c_s}\int_{s}(d^\nabla\alpha)^{\nabla,{c_s}} - \mathfrak{d}^{\boldsymbol{\nabla}}\boldsymbol{\alpha}(s,{v_0}) = \mathcal{O}(h^{\ell+2}),
	\end{equation}
	or expressed intrinsically,
	\begin{equation}
		\mathcal{R} _{v_0,c_s}\;\ \connintidx{\varphi_{c_s}}_s d^\nabla\alpha\;- \mathfrak{d}^{\boldsymbol{\nabla}}\boldsymbol{\alpha}(s,{v_0}) = \mathcal{O}(h^{\ell+2}).
	\end{equation}
\end{theorem}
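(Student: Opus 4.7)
The plan is to reduce the left-hand side, by Thm.~\ref{thm:even-better-reduction-with-center-based-PPF}, to a boundary integral in the center-based PPF, and then compare it face by face with the vertex-based PPF boundary terms that compose $\mathfrak{d}^{\boldsymbol{\nabla}}\boldsymbol{\alpha}(s,v_0)$. The face-by-face mismatch will be controlled by the discrete curvature estimate from Eq.~\eqref{eq:discrete-and-smooth-curvature}.

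First, I would invoke Thm.~\ref{thm:even-better-reduction-with-center-based-PPF} to write
\[
R_{v_0,c_s}\int_s (d^\nabla\alpha)^{\nabla,c_s} = R_{v_0,c_s}\int_{\partial s}\alpha^{\nabla,c_s} + \mathcal{O}(h^{\ell+3}),
\]
and split $\partial s$ into its $\ell+2$ oriented faces $f_i = [v_0,\ldots,\hat v_i,\ldots,v_{\ell+1}]$. Using Def.~\ref{def:ppf-derivative} together with the de~Rham map~\eqref{eq:vertex-based-discretization}, I would rewrite $\mathfrak{d}^{\boldsymbol{\nabla}}\boldsymbol{\alpha}(s,v_0)$ as the same signed sum of face integrals, each in its \emph{own} vertex-based PPF: the face $f_0$ opposite $v_0$ contributes the $v_1$-centered PPF integral post-composed with $\mathcal{R}_{01}$, while each remaining face $f_i$ contributes its $v_0$-centered PPF integral. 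Matching the two sums reduces the theorem to a single, face-by-face estimate of the form
\[
\int_{f_i}\bigl(\mathcal{R}_{v_0,c_s}\mathcal{R}^{\nabla,c_s}(p) - \mathcal{R}_{v_0,u_i}\mathcal{R}^{\nabla,u_i}(p)\bigr)\alpha_p = \mathcal{O}(h^{\ell+2}),
\]
with $u_i = v_0$ for $i\ge 1$ and $u_0 = v_1$.

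Next, I would prove this estimate by identifying the integrand pointwise with a two-prong discrete curvature. For any $p\in f_i$, the two maps inside the parentheses are parallel transports from $E_p$ to $E_{v_0}$ along two different paths: one radial from $p$ to $c_s$ followed by the path from $c_s$ to $v_0$, the other radial from $p$ to $u_i$ followed (only when $u_i\ne v_0$) by the edge from $u_i$ to $v_0$. The closed loop bounded by these two paths lives inside a two-cell of diameter $\mathcal{O}(h)$, and the mismatch is exactly a (generalized) two-prong curvature in the sense of Def.~\ref{def:discrete-curvature}, to which Eq.~\eqref{eq:discrete-and-smooth-curvature} applies to give a pointwise $\mathcal{O}(h^2)$ bound under bounded smooth curvature. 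Since $f_i$ has volume $\mathcal{O}(h^\ell)$ and $\alpha$ is bounded, the face contribution is $\mathcal{O}(h^{\ell+2})$; summing over the $\ell+2$ faces and adding the remainder from Thm.~\ref{thm:even-better-reduction-with-center-based-PPF} yields the claimed bound.

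The hard part will be making the holonomy-to-curvature identification rigorous and uniform in $p\in f_i$. For faces containing $v_0$, the comparison is between the radial retraction from $p$ to $c_s$ (composed with the straight retraction $c_s\to v_0$) and the radial retraction from $p$ to $v_0$; one must produce the auxiliary filling two-cell and check that its curvature integral is $\mathcal{O}(h^2)$ uniformly. The face $f_0$ opposite $v_0$ requires more care, because the comparison is against the composite $\mathcal{R}_{0,1}\mathcal{R}^{\nabla,v_1}$ whose retraction is centered at a different vertex; here one should argue via two successive curvature-bounded cells (one reaching the common center $c_s$ and one closing the loop around $[v_0,v_1,c_s]$) so that the total mismatch remains $\mathcal{O}(h^2)$ pointwise. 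Once this curvature estimate is in place, the remaining summation is routine.
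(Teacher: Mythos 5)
Your proposal follows essentially the same route as the paper's proof: reduce the center-based PPF integral to a boundary integral (absorbing the wedge term at order $\mathcal{O}(h^{\ell+3})$), decompose $\mathfrak{d}^{\boldsymbol{\nabla}}\boldsymbol{\alpha}(s,v_0)$ into its per-face vertex-based PPF integrals, and bound each face mismatch by identifying the difference of transport fields with a two-prong discrete curvature over an auxiliary small triangle (e.g.\ $[v_0,v_1,c_{m_0}]$ or $[v_0,c_s,c_m]$), giving $\mathcal{O}(h^2)\cdot\mathcal{O}(h^\ell)$ per face. The paper settles the uniformity issue you flag simply by Taylor-expanding the transport discrepancy at each face's barycenter, so the argument is the same in substance.
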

\begin{proof} We use the identifications stated in Remark~\ref{identification}.
	It holds in the center-based parallel propagated frame that 
	\begin{equation}
		\label{eq:ppf-to-boundary}
		R_{v_0,c_s}\int_{s} (d^\nabla\alpha)^{\nabla,{c_s}} = R_{v_0,c_s}\int_{\partial s} R_{c_s}\alpha + R_{v_0,c_s}\int_{s}{\omega}^{\nabla,c_s}\wedge R_{c_s}\alpha.
	\end{equation}
	
	\noindent As discussed above, we know 
	\[{\int_{s}{\omega}^{\nabla,c_s}\wedge\alpha^{\nabla,c_s}} = \mathcal{O}(h^{\ell+3}).\]
	In the following we denote $s_i = [v_0,\ldots,\hat{v_i},\ldots,v_{\ell+1}]$ the $\ell-$subsimplex of $s$ without $v_i$. By $m_i$ we will denote the $\ell-$chain opposite to $v_i$ with \emph{positive} orientation relative to that of $s,$ i.e., $m_i=(-1)^i s_i$. Extending the discrete $\boldsymbol{\alpha}$ to $m_i$ by $\boldsymbol{\alpha}(m_i,v_0)=(-1)^i\ \boldsymbol{\alpha}(s_i,v_0),$ we have 
	\[\mathfrak{d}^{\boldsymbol{\nabla}}\boldsymbol{\alpha}(s,{v_0}) = R_{v_0,v_1}\ \boldsymbol{\alpha}(s_0,v_1) + \sum_{i =1}^{\ell+1} (-1)^i\ \boldsymbol{\alpha}(s_i,v_0)=R_{v_0,v_1}\ \boldsymbol{\alpha}(m_0,v_1) + \sum_{i =1}^{\ell+1} \ \boldsymbol{\alpha}(m_i,v_0).\]
	For all faces containing $v_0$, we have by definition 
	\[\boldsymbol{\alpha}(m_i,{v_0}) = \int_{m_i} R^{\nabla,v_0}\alpha .\]
	If we compare the contribution of the PPF-derivative to the contribution from the boundary integral in Eq.~\eqref{eq:ppf-to-boundary} coming from $m_0$, we obtain
	\begin{equation}
		\label{eq:estimation-for-face-with-v-0}
		R_{v_0,c_s}\int_{m_0} R^{\nabla,c_s}\alpha - R_{v_0,v_1}\int_{m_0} R^{\nabla,v_1}\alpha = \int_{m_0} (R_{v_0,c_s}R^{\nabla,c_s} - R_{v_0,v_1}R^{\nabla,v_1})\alpha.
	\end{equation}
	If we do a Taylor expansion for
	\[g\colon m_0\to \R^{r\times r}\colon p\mapsto (R_{v_0,c_s}R^{\nabla,c_s} - R_{v_0,v_1}R^{\nabla,v_1})(p) \]
	at $c_{m_0}$, we get
	\[g(c_{m_0})  =(\underbrace{R_{v_0,c_s} R_{c_s,c_{m_0}}}_{R_{v_0,c_{m_0}}} - R_{v_0,v_1}R_{v_1,c_{m_0}}) =  -\boldsymbol{\Omega^\nabla}([v_0,v_1,c_{m_0}],{v_0},{c_{m_0}}) + \mathcal{O}(h^3)\]
	and therefore, it yields \[g(p) = \boldsymbol{\Omega^\nabla}([v_0,v_1,c_{m_0}],{v_0},{c_{m_0}}) + \mathcal{O}(h^3)\] for $p\!\in\! m_0$.
		Now consider any face $m\!\in\! \partial s$ containing $v_0$ with positive orientation. From Sec.~\ref{sec:revisitCurv},
	\begin{equation}
		\label{eq:error-for-face-with-s_0}
		R_{v_0,c_s}\int_{m} R_{c_s}\alpha - R_{v_0,c_m}\int_{m} R_{c_m}\alpha = -\Omega_{v_0}^\nabla(v_1 - v_0,c_m - v_0)\alpha_{c_m}[m] + \mathcal{O}(h^{\ell+3}).
	\end{equation}
	However we have 
	\[\Omega_{v_0}^\nabla(v_1 - v_0,c_m - v_0)\alpha_{c_m}[m] = \mathcal{O}(h^{\ell+2}).\]
We obtain for the contribution of the error coming from $m$ in Eq.~\eqref{eq:error-to-estimate-frak-d},  again with a Taylor expansion at $c_{m}$,
	\begin{equation}
		\label{eq:estimation-error-face-opp-to-v-0}
		\int_{m} R_{v_0}\alpha - R_{v_0,c_s}\int_{m} R_{c_s}\alpha = \Omega_{v_0}^\nabla(c_m - v_0,c_s - v_0)\cdot\alpha_{c_{m}}[m] + \mathcal{O}(h^{\ell+3}).
	\end{equation}
	Since $\Omega_{v_0}^\nabla(c_m - v_0,c_s - v_0)\cdot\alpha_{c_{m}}[m]\! = \! \mathcal{O}(h^{\ell+2})$, this yields the claim.
\end{proof}

Since the volume of the simplicial cell is of order $\mathcal{O}(h^{\ell+1})$, this shows that the operator $\mathfrak{d}^{\boldsymbol{\nabla}}$ converges under refinement \emph{if the input is obtained through integration in an appropriate PPF.}
However, to ensure convergence of a second application of $\bolddnab$, this is not sufficient since the error after division with the volume of a $(\ell+1)-$simple can remain arbitrarily large. This is where the alternation can improve the accuracy order.
\begin{theorem}[Alternation increases accuracy]
	\label{thm:alternation-augments-accuracy}
	Let $\pi :E \rightarrow M$ be a vector bundle with connection $\nabla$. Let $s=[v_0,\ldots,v_{\ell+1}]\!\subset\! M$ be a region in $M$ for which there exists a diffeomorphism to a $(\ell+1)-$simplex $\sigma$ where each point $v_i$ is mapped to an associated vertex $w_i$ of $\sigma$. Further let $\alpha\in \Omega^\ell(M,E)$ and $\boldsymbol{\alpha}$ be its discretization defined in Eq.~\eqref{eq:vertex-based-discretization}. In this case, we have 
	\begin{equation}
		\label{eq:error-estimate-better-with-alternation}
		R_{v_0,c_s}\int_{s}(d^\nabla\alpha)^{(\nabla,c_s)}  =  \mathrm{Alt}^{\nabla}\mathfrak{d}^{\boldsymbol{\nabla}}\boldsymbol{\alpha}(s,{v_0}) + \mathcal{O}(h^{\ell+3}) = \bolddnab\boldsymbol{\alpha}(s,{v_0}) + \mathcal{O}(h^{\ell+3}),
	\end{equation}
	or expressed intrinsically,
	\begin{equation}
		\mathcal{R}_{v_0,c_s}\ \  \connintidx{\varphi_{c_s}}_{s} d^\nabla\alpha  =  \mathrm{Alt}^{\nabla}\mathfrak{d}^{\boldsymbol{\nabla}}\boldsymbol{\alpha}(s,{v_0}) + \mathcal{O}(h^{\ell+3}) = \bolddnab\boldsymbol{\alpha}(s,{v_0}) + \mathcal{O}(h^{\ell+3}).
	\end{equation}
\end{theorem}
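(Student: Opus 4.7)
The plan is to reduce the claim to Theorem~\ref{thm:ppf-derivative-linear-decay}, applied once per permutation of the simplex vertices, and then exploit cancellations in the resulting alternating sum to gain one additional order. Write $I\coloneqq\int_s(d^\nabla\alpha)^{\nabla,c_s}$, which is of order $h^{\ell+1}$. For each $\pi\in S_{\ell+2}$, the simplex $[v_{\pi(0)},\ldots,v_{\pi(\ell+1)}]$ coincides with $s$ up to the orientation sign $\mathrm{sgn}(\pi)$, so Theorem~\ref{thm:ppf-derivative-linear-decay} applied with base vertex $v_{\pi(0)}$ gives
\[\mathfrak{d}^{\boldsymbol{\nabla}}\boldsymbol{\alpha}\bigl([v_{\pi(0)},\ldots,v_{\pi(\ell+1)}], v_{\pi(0)}\bigr) = \mathrm{sgn}(\pi)\,R_{v_{\pi(0)},c_s}\,I + E_\pi,\qquad E_\pi=\mathcal{O}(h^{\ell+2}).\]
Pre-composing with $\mathrm{sgn}(\pi)\,\mathcal{R}_{v_0,v_{\pi(0)}}$ and averaging over $\pi$, the left-hand side becomes $\bolddnab\boldsymbol{\alpha}(s,v_0)$ by Definition~\ref{def:ext_der_alternation_group}, while the right-hand side splits into a \emph{principal} piece $\frac{1}{(\ell+2)!}\sum_\pi \mathcal{R}_{v_0,v_{\pi(0)}} R_{v_{\pi(0)},c_s}\,I$ and a \emph{residual} piece $\frac{1}{(\ell+2)!}\sum_\pi\mathrm{sgn}(\pi)\,\mathcal{R}_{v_0,v_{\pi(0)}} E_\pi$, which I handle separately.

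For the principal piece, grouping the $(\ell+1)!$ permutations sharing a common first vertex collapses the sum to $\frac{1}{\ell+2}\sum_{j=0}^{\ell+1}\mathcal{R}_{v_0,v_j}R_{v_j,c_s}\,I$. Each factor differs from $R_{v_0,c_s}$ by exactly $\boldsymbol{\Omega^\nabla}([v_0,v_j,c_s],v_0,c_s)$, which by Eq.~\eqref{eq:discrete-and-smooth-curvature} equals $\Omega^\nabla_{v_0}(c_s-v_0,\,v_j-v_0)+\mathcal{O}(h^3)$. The antisymmetry of $\Omega^\nabla$ in its two arguments combined with the barycenter identity $\sum_j(v_j-v_0)=(\ell+2)(c_s-v_0)$ annihilates this leading correction, so $\sum_{j=0}^{\ell+1}\mathcal{R}_{v_0,v_j}R_{v_j,c_s}=(\ell+2)\,R_{v_0,c_s}+\mathcal{O}(h^3)$. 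Multiplied by $I=\mathcal{O}(h^{\ell+1})$, the principal piece is therefore $R_{v_0,c_s}\,I+\mathcal{O}(h^{\ell+4})$, better than what we need.

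The main obstacle is the residual, which is naively only $\mathcal{O}(h^{\ell+2})$; the goal is to beat it down to $\mathcal{O}(h^{\ell+3})$. Here I would use the explicit decomposition of $E_\pi$ produced in the proof of Theorem~\ref{thm:ppf-derivative-linear-decay}: its leading $\mathcal{O}(h^{\ell+2})$ content is a sum, indexed by the boundary faces of the oriented simplex $\sigma_\pi$, of face-wise curvature contractions of the form $\Omega^\nabla_{v_{\pi(0)}}(v_{\pi(i)}-v_{\pi(0)},\,\cdot)\,\alpha_{c_m}[m]$ measuring the mismatch between the $v_{\pi(0)}$-based and $c_s$-based PPFs on each face. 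Taylor-expanding these expressions at $v_0$ rewrites each as a multilinear form in the vertex differences $v_{\pi(i)}-v_0$ whose symmetry type is inherited from the two-slot antisymmetry of $\Omega^\nabla$ and from the face-orientation sign carried by $\alpha$. The alternating sum $\sum_\pi\mathrm{sgn}(\pi)(\cdot)$ acts on such a multilinear form as antisymmetrization across all $\ell+2$ arguments; the fragments with mismatched symmetry type are killed, and for the surviving fully antisymmetric contribution, the same barycenter collapse $\sum_i(v_{\pi(i)}-v_0)=(\ell+2)(c_s-v_0)$ folded together with antisymmetry of $\Omega^\nabla$ forces it to vanish at the $h^{\ell+2}$ level, leaving $\mathcal{O}(h^{\ell+3})$.

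Combining the two estimates yields $\bolddnab\boldsymbol{\alpha}(s,v_0)=R_{v_0,c_s}\,I+\mathcal{O}(h^{\ell+3})$, which is Eq.~\eqref{eq:error-estimate-better-with-alternation}; the intrinsic formulation follows by identifying the matrix expression $R_{v_0,c_s}I$ with $\mathcal{R}_{v_0,c_s}\connintidx{\varphi_{c_s}}_s d^\nabla\alpha$ via Remark~\ref{identification}. I expect the delicate combinatorial bookkeeping required to make the symmetry-based cancellation in the residual piece fully rigorous --- especially to verify that all ``partial-symmetry'' fragments surviving after the sign-weighted sum are of the anticipated order --- to be the main technical hurdle, the principal piece having been already absorbed.
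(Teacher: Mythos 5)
Your reduction is sound in outline, and your treatment of the principal piece is correct: grouping permutations by their first vertex, writing $\mathcal{R}_{v_0,v_j}R_{v_j,c_s}-R_{v_0,c_s}=\boldsymbol{\Omega^\nabla}([v_0,v_j,c_s],v_0,c_s)$ and invoking $\sum_j(v_j-v_0)=(\ell+2)(c_s-v_0)$ with the antisymmetry of $\Omega^\nabla$ is precisely the barycentric cancellation the paper uses (it appears almost verbatim in the corollary on $\bolddnab\bolddnab$). The genuine gap is in the residual piece, which is where the theorem actually lives, and the mechanism you invoke for it does not exist. Both $\mathfrak{d}^{\boldsymbol{\nabla}}\boldsymbol{\alpha}(\pi(s),v_{\pi(0)})$ and $R_{v_{\pi(0)},c_s}\int_{\pi(s)}(d^\nabla\alpha)^{\nabla,c_s}$ are odd under orientation reversal of the simplex (the former because discrete forms are antisymmetric, Def.~\ref{def:discreteForm}, so permuting the last $\ell$ vertices flips every face term), hence $E_\pi$ itself carries the factor $\mathrm{sgn}(\pi)$ and otherwise depends only on the pair $(\pi(0),\pi(1))$. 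Consequently $\frac{1}{(\ell+2)!}\sum_\pi\mathrm{sgn}(\pi)\,\mathcal{R}_{v_0,v_{\pi(0)}}E_\pi$ is an \emph{unsigned} average over the choices of base vertex and second vertex: no antisymmetrization ``across all $\ell+2$ arguments'' takes place, and there are no fragments of mismatched symmetry type to be killed. The sign-based cancellation you describe is therefore not available, and since the leading content of each $E_\pi$ is genuinely of size $\mathcal{O}(h^{\ell+2})$, the claim that the residual drops to $\mathcal{O}(h^{\ell+3})$ is exactly what still has to be proven.

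What actually closes the estimate --- and what constitutes the paper's proof --- is a pair of averaging identities applied to the \emph{face-wise} transport mismatches rather than to the bulk integral $I$: for fixed base vertex $v_j$, averaging over the second vertex gives $\frac{1}{\ell+1}\sum_{k\neq j}\bigl(R_{v_j,v_k}R_{v_k,c_{m_j}}-R_{v_j,c_{m_j}}\bigr)\approx\Omega^\nabla_{v_j}(c_{m_j}-v_j,\,c_{m_j}-v_j)=0$, i.e.\ the second-vertex average replaces $v_k$ by the barycenter of the face opposite $v_j$ (the step around Eq.~\eqref{eq:permutation-group-less-terms}); and then, face by face, the average over the base vertex of $R_{v_0,v_j}R_{v_j,c_{m_i}}$ is compared to $R_{v_0,c_s}R_{c_s,c_{m_i}}$ using $\frac{1}{\ell+2}\sum_j v_j=c_s$ inside the first slot of $\Omega^\nabla_{v_0}(\,\cdot\,,c_{m_i}-v_0)$. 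In your decomposition this second comparison is hidden inside the residual (the faces through $v_j$ contribute terms $(R_{v_j,c_m}-R_{v_j,c_s}R_{c_s,c_m})\alpha_{c_m}[m]$ whose $j$-average must be shown to vanish at leading order), so the ``delicate combinatorial bookkeeping'' you defer is not a routine check of a symmetry argument --- it is the core of the theorem, and it has to be carried out with these barycentric identities and second-order curvature expansions of the transport mismatches (as in the paper's splitting of $\mathrm{Alt}^{\boldsymbol{\nabla}}$ into the one-face operators $\mathrm{Alt}^{\boldsymbol{\nabla}}_j$), not with the antisymmetrization you propose.
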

\begin{proof}
	It holds by Theorem~\eqref{thm:even-better-reduction-with-center-based-PPF}
	\begin{equation}
		\label{eq:ext-der-onto-boundary-and-higher-order}
		\int_{s} R_{v_0,c_s} (d^\nabla\alpha)^{\nabla,c_s} = R_{v_0,c_s}\int_{\partial s } \alpha^{\nabla,c_s} + \mathcal{O}(h^{\ell+3}).
	\end{equation}
	Let $\{m_i\}\subset \partial s$ be the set of $\ell-$cells with positive orientation forming the boundary of $s$ where $v_i \!\notin\! m_i$. In the following, we will show that the contributions for every $\ell-$face coming from Eq.~\eqref{eq:ext-der-onto-boundary-and-higher-order} and those of $\bolddnab(s,v_0)$ differ by a term of order $\mathcal{O}(h^{\ell+3})$. To see this, we rewrite the exterior covariant derivative as
	%\begin{equation}\bolddnab\boldsymbol{\alpha}(s,v_0) = \mathrm{Alt}^{\boldsymbol{\nabla}}\mathfrak{d}^{\boldsymbol{\nabla}}\boldsymbol{\alpha}(s,v_0) = \frac{1}{\ell+2}\sum_{j = 0}^{\ell+1} R_{v_0,v_j} \Bigl(\frac{1}{(\ell+1)!}\sum_{\substack{\pi\in S_{\ell+2}\\\pi(0) = j}} \mathrm{sgn}(\pi)\mathfrak{d}^{\boldsymbol{\nabla}}\boldsymbol{\alpha}(\pi(s),v_j) \Bigr).
	%	\label{eq:exterior-cov-derivative-splitted}
	%\end{equation}

  \begin{align}\bolddnab\boldsymbol{\alpha}(s,v_0) &= \mathrm{Alt}^{\boldsymbol{\nabla}}\mathfrak{d}^{\boldsymbol{\nabla}}\boldsymbol{\alpha}(s,v_0)= \frac{1}{\ell+2}\sum_{j = 0}^{\ell+1} R_{v_0,v_j} \Bigl(\frac{1}{(\ell+1)!}\sum_{\substack{\pi\in S_{\ell+2}\\\pi(0) = j}} \mathrm{sgn}(\pi)\mathfrak{d}^{\boldsymbol{\nabla}}\boldsymbol{\alpha}(\pi(s),v_j) \Bigr).
		\label{eq:exterior-cov-derivative-splitted}
	\end{align}
  Let 
	\[
	\mathrm{Alt}^{\boldsymbol{\nabla}}_j\mathfrak{d}^{\boldsymbol{\nabla}}\boldsymbol{\alpha}(s,v_j) \coloneqq\frac{1}{(\ell+1)!} \sum_{\pi\in S_{\ell+2},\pi(0) = j} \mathrm{sgn}(\pi)\mathfrak{d}^{\boldsymbol{\nabla}}\boldsymbol{\alpha}(\pi(s),v_j)
	\]
	be the operator that uses an alternation on the face opposite to $v_j$ only.
	For the boundary cell $m_i$ we consider the contributions coming from $\mathrm{Alt}^{\boldsymbol{\nabla}}_j\mathfrak{d}^{\boldsymbol{\nabla}}\boldsymbol{\alpha}$ to Eq.~\eqref{eq:exterior-cov-derivative-splitted} for $i\neq j$ first. In this case the face $m_i$ is not opposite to $v_j$. Hence, its contributions turn into
	\[R_{v_0,v_j}\boldsymbol{\alpha}(m_i,v_j) = R_{v_0,v_j}\int_{m_i} R^{\nabla,v_j}\alpha.\]
	For the case $j \!=\! i$, $m_i$ is the face opposite to the evaluation vertex of $\mathrm{Alt}^{\boldsymbol{\nabla}}_j\mathfrak{d}^{\boldsymbol{\nabla}}\boldsymbol{\alpha}(s,v_j)$. In this case, we can rewrite
	\[\mathrm{Alt}^{\boldsymbol{\nabla}}_i\mathfrak{d}^{\boldsymbol{\nabla}}\boldsymbol{\alpha}(s,v_i) = \frac{1}{\ell+1}\sum_{v_k\in m_i} \frac{1}{\ell !}\sum_{\substack{\pi\in S_{\ell+2}\\ \pi(0)=j\\ \pi(1) = k}}\mathrm{sgn}(\pi) \mathfrak{d}^{\boldsymbol{\nabla}}\alpha(\pi(s),v_j).\] This yields that the contributions coming from $m_i$ are of the form 
	\begin{equation}
		R_{v_0,v_i} \frac{1}{\ell+1}\sum_{k\neq i} R_{v_i,v_k}\int_{m_i} R^{\nabla,v_k}\alpha. \label{eq:permutation-group-less-terms}
	\end{equation}
	If we analyze the difference of this expression to the integral of $\alpha$ in the $v_i-$based parallel propagated frame,  we obtain
	\[\frac{1}{\ell+1}\sum_{k\neq i} R_{v_i,v_k}\int_{m_i} R^{\nabla,v_k}\alpha - \int_{m_i} R^{\nabla,v_i}\alpha =\frac{1}{\ell+1}\sum_{k\neq i} \int_{m_i}(R_{v_i,v_k} R^{\nabla,v_k} - R^{\nabla,v_i})\alpha.\]
	Similar to the discussion in Theorem~\eqref{thm:even-better-reduction-with-center-based-PPF}, we can argue that the integral of the center-based Taylor expansion yields
	\begin{align*}
		&\frac{1}{\ell+1}\sum_{k\neq i} \int_{m_i}(R_{v_i,v_k} R^{\nabla,v_k} - R^{\nabla,v_i})\alpha = \frac{1}{\ell+1}\sum_{k\neq i} (R_{v_i,v_k} R_{v_k,c_{m_i}} - R_{v_i,c_{m_i}})\alpha_{c_{m_i}}[m_i] + \mathcal{O}(h^{\ell+3})
	\end{align*}
	
	As explained in Sec.~\ref{sec:revisitCurv} , it holds 
	\begin{align}
	&(R_{v_i,v_k} R_{v_k,c_{m_i}} - R_{v_i,c_{m_i}})= \boldsymbol{\Omega^\nabla}([v_i,v_k,c_{m_i}],v_i,c_{m_i}) = \Omega^\nabla_{v_i}[v_k - v_i,c_{m_i} - v_i] + \mathcal{O}(h^{3}).
	\end{align}
	
	Therefore,
	\begin{align*}
		\frac{1}{\ell+1}\sum_{k\neq i} (R_{v_i,v_k} R_{v_k,c_{m_i}} - R_{v_i,c_{m_i}})\alpha_{c_{m_i}}[m_i]
		&= \frac{1}{\ell+1}\sum_{k\neq i} \Omega^\nabla_{v_i}[v_k - v_i,c_{m_i} - v_i]\alpha_{c_{m_i}}[m_i] + \mathcal{O}(h^{\ell+3})\\
		&=  \Omega^\nabla_{v_i}\Bigl[\frac{1}{\ell+1}\sum_{k\neq i} v_k - v_i,c_{m_i} - v_i \Bigr]\alpha_{c_{m_i}}[m_i] + \mathcal{O}(h^{\ell+3}).
	\end{align*}
	Since $\frac{1}{\ell+1}\sum_{k\neq i} v_k = c_{m_i},$ we obtain 
	\[\frac{1}{\ell+1}\sum_{k\neq i} R_{v_i,v_k}\int_{m_i} R^{\nabla,v_k}\alpha - \int_{m_i} R^{\nabla,v_i}\alpha \in \mathcal{O}(h^{\ell+3}).\]
	Therefore, we can deduce that the contribution of the face $m_i$ for the difference 
	$\bolddnab\boldsymbol{\alpha}(s,v_0) - R_{v_0,c_s}\int_{s} (d^\nabla\alpha)^{\nabla,c_s}
	$
	is, up to order $\mathcal{O}(h^{\ell+3}),$ of the form
	\begin{align*}
		&\frac{1}{\ell+2}\sum_{j = 0}^{\ell+1} R_{v_0,v_j} \int_{m_i} R^{\nabla,v_j}\alpha - R_{v_0,c_s} \int_{m_i} R^{\nabla,c_s}\alpha= \frac{1}{\ell+2}\sum_{j = 0}^{\ell+1} \int_{m_i}\Bigl(R_{v_0,v_j} R^{\nabla,v_j}- R_{v_0,c_s} R^{\nabla,c_s}\Bigr)\alpha.
	\end{align*}
	
	Integration of the Taylor expansion at $c_{m_i}$ yields
	\begin{align*}
		&\frac{1}{\ell+2}  \sum_{j = 0}^{\ell+1} R_{v_0,v_j} \int_{m_i} R^{\nabla,v_j}\alpha - R_{v_0,c_s} \int_{m_i} R^{\nabla,c_s}\alpha \\&=\frac{1}{\ell+2}\sum_{j= 0}^{\ell+1} (R_{v_0,v_j}R_{v_j,c_{m_i}}  - R_{v_0,c_s}R_{c_s,c_{m_i}})\alpha_{c_{m_i}}[m_i] +\mathcal{O}(h^{\ell+3})\\
		&= \frac{1}{\ell+2}\sum_{j = 0}^{\ell+1} \bigg(R_{v_0,v_j}R_{v_j,c_{m_i}} - R_{v_0,c_{m_i}}  - (R_{v_0,c_s}R_{c_s,c_{m_i}} - R_{v_0,c_{m_i}})\bigg)\alpha_{c_{m_i}}[m_i] + \mathcal{O}(h^{\ell+3})\\
		&= \frac{1}{\ell+2}\sum_{j = 0}^{\ell+1}\bigg( \Omega^\nabla_{v_0}[v_j - v_0,c_{m_i} - v_0]\alpha_{c_{m_i}}[m_i]\\
		 &\,\quad- \Omega^\nabla_{v_0}[c_s - v_0,c_{m_i} - v_0]\ \alpha_{c_{m_i}}[m_i]\bigg) + \mathcal{O}(h^{\ell+3})\\
		&= \Omega^\nabla_{v_0}\biggl[ \underbrace{\frac{1}{\ell+2}\sum_{j = 0}^{\ell+1} v_j}_{ = c_s} - v_0,c_{m_i} - v_0\biggr]\alpha_{c_{m_i}}[m_i]\\
		&\quad\, - \Omega^\nabla_{v_0}[c_s - v_0,c_{m_i} - v_0]\ \alpha_{c_{m_i}}[m_i] + \mathcal{O}(h^{\ell+3})\\
		&= \mathcal{O}(h^{\ell+3}).
	\end{align*} 
	Since $v_i$ was chosen arbitrarily, this result yields the claim.
\end{proof}

To conclude the convergence of $\bolddnab \bolddnab$ under refinement, we need another lemma.
\begin{lemma}
	\label{lem:relationbtwPPFs}
	Let $ \pi : E \rightarrow M$ be a vector bundle with connection $\nabla$. Let $s=[v_0,\ldots,v_{\ell+1}]\!\subset\! M$ be a region in $M$ for which there exists a diffeomorphism to a $\ell-$simplex $\sigma$ where each point $v_i$ are mapped to an associated vertex $w_i$ of $\sigma$. Let $m\!\subset\! s$ be a $\ell-$subcell containing $v_0$. In this case, it holds that
	\begin{equation}
		R_{v_0,c_m}\int_{m} \alpha^{\nabla,c_m} = R_{v_0,c_s}\int_{m} \alpha^{\nabla,c_s} + \mathcal{O}(h^{\ell+2}),
	\end{equation}
	which can be expressed intrinsically as
	\begin{equation}
		\mathcal{R}_{v_0,c_m}\ \ \  \connintidx{\hspace*{-0.4mm}\varphi_{c_m}}_{m} \alpha = \mathcal{R}_{v_0,c_s}\ \ \connintidx{\varphi_{c_s}}_{m} \alpha + \mathcal{O}(h^{\ell+2})
	\end{equation}
	where two PPFs (one from the center $c_m$ of $m$, one from the center $c_s$ of $s$) are used, see Fig.~\ref{fig:two-different-ppf-one-face}.
\end{lemma}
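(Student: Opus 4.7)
The plan is to establish a uniform pointwise bound of order $\mathcal{O}(h^2)$ on the difference of the two integrands, and then combine this with the fact that $m$ has $\ell$-dimensional volume $\mathcal{O}(h^\ell)$ to obtain the claimed $\mathcal{O}(h^{\ell+2})$ bound.

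First, I would rewrite the difference between the two sides as a single integral
\[
\int_{m}\bigl[R_{v_0,c_m}R^{\nabla,c_m}(p) - R_{v_0,c_s}R^{\nabla,c_s}(p)\bigr]\,\alpha(p).
\]
For any $p\in m$, the two matrix products above represent parallel transport $E_p\to E_{v_0}$ along two different paths --- one passing through $c_m$, the other through $c_s$ --- both lying entirely in $s$; their concatenation forms a closed loop bounding a $2$-chain of area $\mathcal{O}(h^2)$ since $\mathrm{diam}(s)=\mathcal{O}(h)$. By the holonomy--curvature correspondence reviewed in Sec.~\ref{sec:revisitCurv} (the same mechanism used in Eq.~\eqref{eq:error-for-face-with-s_0} within the proof of Thm.~\ref{thm:ppf-derivative-linear-decay}), the pointwise discrepancy is then controlled by an integral of $\Omega^\nabla$ over that $2$-chain, hence is $\mathcal{O}(h^2)$ under the standing boundedness assumption on the curvature. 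Concretely, evaluating at $p=c_m$ reduces the integrand to $R_{v_0,c_m}-R_{v_0,c_s}R_{c_s,c_m}$, which is (up to sign) the discrete curvature on the auxiliary triangle $[v_0,c_s,c_m]$ in the sense of Def.~\ref{def:discrete-curvature}; Eq.~\eqref{eq:discrete-and-smooth-curvature} then identifies it with $\Omega^\nabla_{v_0}(v_0-c_m,\,v_0-c_s)+\mathcal{O}(h^3)=\mathcal{O}(h^2)$, and the same bound extends uniformly to every $p\in m$ by the same bounded-area/bounded-curvature argument.

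Multiplying this uniform $\mathcal{O}(h^2)$ bound by the $\mathcal{O}(1)$ components of $\alpha$ and integrating over $m$ (of volume $\mathcal{O}(h^\ell)$) will then yield the claimed $\mathcal{O}(h^{\ell+2})$ estimate. The main technical subtlety will lie in verifying that the retraction paths induced on $s$ and on $m$ truly bound a region of area $\mathcal{O}(h^2)$ --- these paths genuinely differ, since the retraction on $s$ towards $c_s$ generally leaves $m$ whereas the retraction on $m$ towards $c_m$ stays on $m$ --- but this is immediate from $\mathrm{diam}(s)=\mathcal{O}(h)$ together with the bounded derivatives of the diffeomorphisms defining the two retractions.
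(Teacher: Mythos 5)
Your proposal is correct and follows essentially the same route as the paper: both rewrite the difference of the two sides as a single integral of $(R_{v_0,c_s}R^{\nabla,c_s}-R_{v_0,c_m}R^{\nabla,c_m})\alpha$ over $m$, identify the matrix factor (evaluated at $c_m$) with the discrete curvature $\boldsymbol{\Omega^\nabla}([v_0,c_s,c_m],v_0,c_m)=\mathcal{O}(h^2)$ of the auxiliary triangle, and pair it with the $\mathcal{O}(h^\ell)$ contribution of the $\ell$-form over $m$. The only cosmetic difference is that the paper isolates the leading term by a Taylor expansion at $c_m$ with an $\mathcal{O}(h^{\ell+3})$ remainder, whereas you use a uniform pointwise holonomy bound over all of $m$; the underlying curvature-controls-path-dependence mechanism is identical.
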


\begin{proof}
	It holds
	\begin{align*}
		R_{v_0,c_s}\int_{m} \alpha^{\nabla,c_s} - R_{v_0,c_m}\int_{m} \alpha^{\nabla,c_m} &= \int_{m} (R_{v_0,c_s} R^{\nabla,c_s} - R_{v_0,c_m} R^{\nabla,c_m})\alpha \\
		&= (R_{v_0,c_s} R_{c_s,c_m} - R_{v_0,c_m})\alpha_{c_m}[m] +\mathcal{O}(h^{\ell+3})\\
		&= \underbrace{\boldsymbol{\Omega^\nabla}([v_0,c_s,c_m],v_0,c_m)}_{= \mathcal{O}(h^2)}\underbrace{\alpha_{c_m}[m]}_{= \mathcal{O}(h^\ell)} = \mathcal{O}(h^{\ell+2}).\\
	\end{align*}
\end{proof}

\begin{figure}
	\centering
	\includegraphics[width = 0.7\linewidth]{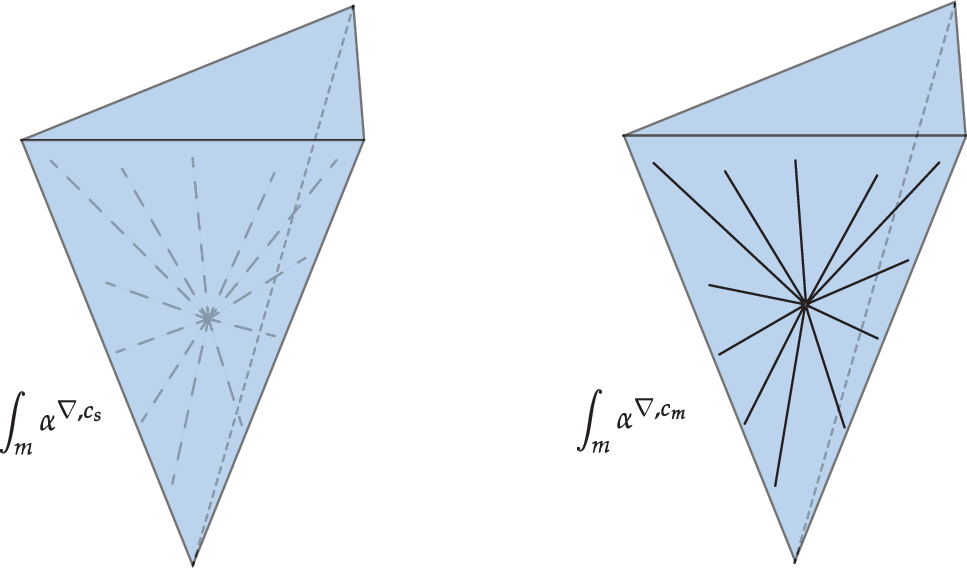}
	\caption{\textbf{Visualization} of the two parallel propagated frame fields used in Lemma~\ref{lem:relationbtwPPFs}. It is noteworthy that the integral computed on the left-hand side involves a PPF whose source is not contained within the integration domain \( m \).}
	\label{fig:two-different-ppf-one-face}
\end{figure}

\noindent This allows us to conclude the convergence of $\bolddnab\bolddnab$ under refinement, as explained next.
\begin{corollary}[Convergence of $\bolddnab\bolddnab$]
	Let $ \pi :E \rightarrow M$ be a vector bundle with connection $\nabla$. Let $s=[v_0,\ldots,v_{\ell+2}]\!\subset\! M$ be a region in $M$ for which there exists a diffeomorphism to a $\ell-$simplex $\sigma$ where each point $v_i$ are mapped to an associated vertex $w_i$ of $\sigma$. Further, let $\alpha\in \Omega^\ell(M,E)$ and $\boldsymbol{\alpha}$ be its discretization defined in Eq.~\eqref{eq:vertex-based-discretization}. We have 
	\begin{equation}
		{\bolddnab\bolddnab\boldsymbol{\alpha}(\sigma,{v_0})} = {R_{v_0,c_s}\int_{s} (d^\nabla d^\nabla \alpha)^{\nabla,c_s}} + \mathcal{O}(h^{\ell+3}).
	\end{equation}
	or equivalently,
	\begin{equation}
		{\bolddnab\bolddnab\boldsymbol{\alpha}(\sigma,{v_0})} = \mathcal{R}_{v_0,c_s}\ \ \connintidx{\varphi_{c_s}}_{s} (d^\nabla d^\nabla \alpha)+ \mathcal{O}(h^{\ell+3}).
	\end{equation}
\end{corollary}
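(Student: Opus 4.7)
The plan is to bridge two applications of Theorem~\ref{thm:alternation-augments-accuracy} using Lemma~\ref{lem:relationbtwPPFs} to relate PPFs with different origins. Setting $\beta \coloneqq d^\nabla\alpha \in \Omega^{\ell+1}(M,E)$ and letting $\boldsymbol{\beta}$ be its vertex-based PPF discretization via Eq.~\eqref{eq:vertex-based-discretization}, I would first apply the theorem on each $(\ell+1)$-dimensional subsimplex $m\subset s$ containing a chosen corner $v$, obtaining
\[
\bolddnab\boldsymbol{\alpha}(m,v) \;=\; \mathcal{R}_{v,c_m}\connintidx{\varphi_{c_m}}_m \beta \;+\; \mathcal{O}(h^{\ell+3}).
\]

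The next step is to replace the barycenter-based PPF integral on the right-hand side by the vertex-based PPF integral that defines $\boldsymbol{\beta}(m,v)$. Although Lemma~\ref{lem:relationbtwPPFs} is stated for anchors $c_m$ and $c_s$, its proof only uses the fact that the discrepancy $\mathcal{R}_{v,p}\mathcal{R}^{\nabla,p} - \mathcal{R}^{\nabla,v}$, evaluated at a point $q$, coincides with the discrete curvature $\boldsymbol{\Omega^\nabla}([v,p,q],v,q) = \mathcal{O}(h^2)$. Specializing the argument to $p=v$ (where the auxiliary triangle degenerates, but the $\mathcal{O}(h^2)$ estimate still holds, since $\mathcal{R}^{\nabla,v}$ and the canonical transport from $c_m$ to $v$ differ only through a curvature contribution over the enclosed lune) and using that $\beta$ has degree $\ell+1$ on an $(\ell+1)$-cell of volume $\mathcal{O}(h^{\ell+1})$, I obtain
\[
\mathcal{R}_{v,c_m}\connintidx{\varphi_{c_m}}_m \beta \;-\; \connintidx{\varphi_v}_m \beta \;=\; \mathcal{O}(h^{\ell+3}),
\]
so that $\bolddnab\boldsymbol{\alpha}(m,v) = \boldsymbol{\beta}(m,v) + \mathcal{O}(h^{\ell+3})$ uniformly over all face–corner pairs of $s$.

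Finally, I would propagate this perturbation through a second application of $\bolddnab$. Since $\bolddnab = \mathrm{Alt}^{\boldsymbol{\nabla}}\mathfrak{d}^{\boldsymbol{\nabla}}$ is a finite linear combination of its input face-values with uniformly bounded coefficients (products of parallel-transport maps weighted by $1/(\ell+2)!$), an $\mathcal{O}(h^{\ell+3})$ perturbation on each face produces an $\mathcal{O}(h^{\ell+3})$ perturbation of the output; hence $\bolddnab\bolddnab\boldsymbol{\alpha}(s,v_0) = \bolddnab\boldsymbol{\beta}(s,v_0) + \mathcal{O}(h^{\ell+3})$. A second invocation of Theorem~\ref{thm:alternation-augments-accuracy}, now applied to the discrete $(\ell+1)$-form $\boldsymbol{\beta}$ on the $(\ell+2)$-simplex $s$, yields $\bolddnab\boldsymbol{\beta}(s,v_0) = \mathcal{R}_{v_0,c_s}\connintidx{\varphi_{c_s}}_s d^\nabla\beta + \mathcal{O}(h^{\ell+4})$, and substituting $\beta = d^\nabla\alpha$ gives the desired identity. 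The main obstacle I anticipate is precisely the vertex-anchored specialization of Lemma~\ref{lem:relationbtwPPFs}: one must verify, via a Taylor expansion at $c_m$, that the two candidate parallel transports from $c_m$ to $v$ (one through the PPF retraction, one through the canonical path used to define $\mathcal{R}_{v,c_m}$) agree to order $\mathcal{O}(h^2)$, which is a standard curvature estimate over an enclosed $\mathcal{O}(h^2)$-area region but requires separate bookkeeping since the proof of Lemma~\ref{lem:relationbtwPPFs} as stated assumes a non-degenerate triangle.
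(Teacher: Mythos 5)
Your argument is correct, but it is organized differently from the paper's proof, and the difference is worth noting. The paper never introduces the intermediate object you call $\boldsymbol{\beta}$: it unrolls everything explicitly, starting from Theorem~\ref{thm:even-better-reduction-with-center-based-PPF} to turn $R_{v_0,c_s}\int_s (d^\nabla d^\nabla\alpha)^{\nabla,c_s}$ into a boundary integral, then using Lemma~\ref{lem:relationbtwPPFs} face by face to switch from the $c_s$-anchored PPF to the $c_m$-anchored ones, invoking Theorem~\ref{thm:alternation-augments-accuracy} on each boundary face (with a separate curvature estimate for the face opposite $v_0$) to identify the result with $\mathfrak{d}^{\boldsymbol{\nabla}}\bolddnab\boldsymbol{\alpha}(s,v_0)$ up to $\mathcal{O}(h^{\ell+3})$, and finally showing by an explicit barycentric-cancellation computation that the outer alternation only perturbs this by $\mathcal{O}(h^{\ell+3})$. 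This longer route has a payoff you lose: it establishes en passant that $\mathfrak{d}^{\boldsymbol{\nabla}}\bolddnab$ already attains the same order as $\bolddnab\bolddnab$, a fact the paper comments on after the proof and checks numerically. Your route is more modular: you show that $\bolddnab\boldsymbol{\alpha}(m,v)$ agrees, uniformly over face--corner pairs, with the de Rham discretization $\boldsymbol{\beta}(m,v)=\int_m(d^\nabla\alpha)^{\nabla,v}$ of Eq.~\eqref{eq:vertex-based-discretization} up to $\mathcal{O}(h^{\ell+3})$, propagate this through the bounded finite combination defining $\bolddnab$, and then reuse Theorem~\ref{thm:alternation-augments-accuracy} verbatim on $\boldsymbol{\beta}$ (degree $\ell+1$, so the second error is even $\mathcal{O}(h^{\ell+4})$), which automatically takes care of the outer alternation. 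The price is exactly the vertex-anchored variant of Lemma~\ref{lem:relationbtwPPFs} that you flag; its proof is the same Taylor-expansion-at-$c_m$ argument, and your worry about a degenerate triangle is a red herring: the relevant comparison at a point $q\in m$ is between transport $q\to v$ along the $v$-based retraction and transport $q\to c_m\to v$, so the discrete-curvature stencil is $[v,c_m,q]$ (equivalently the lune bounded by the two paths), giving the $\mathcal{O}(h^2)$ factor, which against the $\mathcal{O}(h^{\ell+1})$ integral of the $(\ell+1)$-form $d^\nabla\alpha$ over $m$ yields the needed $\mathcal{O}(h^{\ell+3})$. (If you also want the paper's byproduct, note that applying Theorem~\ref{thm:ppf-derivative-linear-decay} to $\boldsymbol{\beta}$ together with your facewise comparison recovers the $\mathfrak{d}^{\boldsymbol{\nabla}}\bolddnab$ statement as well.) One cosmetic slip: on an $(\ell+2)$-simplex the alternation averages over $S_{\ell+3}$ with weight $1/(\ell+3)!$, not $1/(\ell+2)!$; this does not affect the boundedness argument.
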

\begin{proof}
	It holds
	\begin{align*}
		&R_{v_0,c_s}\int_{s} (d^\nabla d^\nabla \alpha)^{\nabla,c_s} = R_{v_0,c_s}\int_{\partial s} (d^\nabla\alpha)^{\nabla,c_s} + \mathcal{O}(h^{\ell+4}).
	\end{align*}
	Hence, by the previous lemma and since $d^\nabla\alpha\in \Omega^{\ell+1}(M;E)$, we obtain, 
	\[R_{v_0,c_s}\int_{s} (d^\nabla d^\nabla \alpha)^{\nabla,c_s} = \sum_{f\in \partial s} R_{v_0,c_s}\int_{m}(d^\nabla\alpha)^{\nabla,c_m} + \mathcal{O}(h^{\ell+3}).\]
	As in the previous proof, let $m_i$ be the $(\ell+1)-$subcell in $\partial s$ opposite to $v_i$ with positive orientation. Thus by Theorem~\ref{thm:alternation-augments-accuracy} we obtain 
	\[R_{v_0,c_s}\int_{s} (d^\nabla d^\nabla \alpha)^{\nabla,c_s} = R_{v_0,c_{m_0}} \int_{m_0} (d^\nabla\alpha)^{\nabla,c_{m_0}} + \sum_{i = 1}^{\ell+2} \bolddnab\boldsymbol{\alpha}(m_i,v_0) + \mathcal{O}(h^{\ell+3}).\]
	For the first term, note that 
	\[R_{v_0,c_{m_0}}\int_{m_0} (d^\nabla\alpha)^{\nabla,c_{m_0}} = R_{v_0,v_1}R_{v_1,c_{m_0}}\int_{m_0} (d^\nabla\alpha)^{\nabla,c_{m_0}} - \boldsymbol{\Omega^\nabla}([v_0,v_1,c_{m_0}],v_0,c_{m_0})\int_{m_0} (d^\nabla\alpha)^{\nabla,c_{m_0}}.\]
	Since \[\underbrace{\boldsymbol{\Omega^\nabla}([v_0,v_1,c_{m_0}],v_0,c_{m_0})}_{= \mathcal{O}(h^2)}\underbrace{\int_{m_0} (d^\nabla\alpha)^{\nabla,c_{m_0}}}_{= \mathcal{O}(h^{\ell+1})} = \mathcal{O}(h^{\ell+3})\]
	and 
	\[R_{v_0,v_1}R_{v_1,c_{m_0}}\int_{m_0} (d^\nabla\alpha)^{\nabla,c_{m_0}} = R_{v_0,v_1} \bolddnab\boldsymbol{\alpha}(m_0,v_1),\] we obtain 
    \begin{align*} 
	{R_{0,c_s}\int_{s} (d^\nabla d^\nabla \alpha)^{\nabla,c_s}} &= R_{v_0,v_1}\bolddnab \boldsymbol{\alpha}(m_0,v_1) + \sum_{j=1}^{\ell+2} \bolddnab\boldsymbol{\alpha}(m_j,v_0) + \mathcal{O}(h^{\ell+3}) \\
	&=\mathfrak{d}^{\boldsymbol{\nabla}}\bolddnab\boldsymbol{\alpha}(s,v_0) + \mathcal{O}(h^{\ell+3}).
    \end{align*} 
	Furthermore,
	\begin{align*}
		&\mathrm{Alt}^{\boldsymbol{\nabla}}\left(\mathfrak{d}^{\boldsymbol{\nabla}}\bolddnab\boldsymbol{\alpha}(s,v_0)\right) - R_{v_0,c_s}\int_{s} (d^\nabla d^\nabla \alpha)^{\nabla,c_s}\\
		&= \mathrm{Alt}^{\boldsymbol{\nabla}}\left( {R_{v_0,c_s}\int_{s}  (d^\nabla d^\nabla \alpha)^{\nabla,c_s} }\right) - R_{v_0,c_s}\int_{s}  (d^\nabla d^\nabla \alpha)^{\nabla,c_s} + \mathcal{O}(h^{\ell+3})\\
		&= \frac{1}{\ell+3}\sum_{j=0}^{\ell+2} \left(R_{v_0,v_j}R_{v_j,c_s} - R_{v_0,c_s}\right)\left(  \int_{s}  (d^\nabla d^\nabla \alpha)^{\nabla,c_s}\right)+\mathcal{O}(h^{\ell+3})\\
		&=\Omega^{\nabla}\Bigl(v_0-c_s, \underbrace{\frac{1}{\ell+3}\sum_{j=0}^{\ell+2}v_j - c_s}_{=0}\Bigr)\int_{s}  (d^\nabla d^\nabla \alpha)^{\nabla,c_s} + \mathcal{O}(h^{\ell+3}) \in \mathcal{O}(h^{\ell+3}).
	\end{align*}
	Hence $\bolddnab\bolddnab\boldsymbol{\alpha}(s,{v_0}) - R_{v_0,c_s}\int_{s} (d^\nabla d^\nabla \alpha)^{\nabla,c_s} \in \mathcal{O}(h^{\ell+3})$,
	which yields the claim.
\end{proof}
In the proof, we only show that the operators $\bolddnab\bolddnab$ and $\mathfrak{d}^{\boldsymbol{\nabla}}\bolddnab$ are of the same convergence order, although one might expect that a novel alternation could augment the accuracy of the PPF-derivative. However, for a second application, the alternation cannot compensate for the lack of accuracy from the original input. Numerical tests will confirm that a novel application of the alternation operator to  $\mathfrak{d}^{\boldsymbol{\nabla}}\bolddnab$ cannot augment the accuracy further, indicating that a numerical realization of a third application of the exterior covariant derivative will be, in this setup, impossible to achieve accurately.

\begin{remark}
	This corollary confirms the convergence, under mesh refinement, of the implicitly defined wedge product between the curvature $2-$form and a vector-valued form, as introduced in Theorem~\ref{thm:algebraic-bianchi-identity}. It converges towards the smooth curvature wedge product in Eq.~\eqref{eq:algBianchi}. Thus, the discrete algebraic Bianchi Identity holds both conceptually and numerically in the limit under mesh refinement.
\end{remark}

\begin{remark}
	As discussed in the analysis preceding Def.~\ref{def:lie-algebra-alternation}, a noteworthy reduction in the number of terms for the alternation can be used and will still ensure enhanced convergence of the sided discrete covariant exterior derivative. Eq.~\eqref{eq:permutation-group-less-terms} shows that we can reduce the number of permutations from the full alternation group down to only $(\ell+1)-$terms, resulting in a substantial decrease in the computational cost of the alternation.
	
\end{remark}

\begin{remark}[Convergence analysis for general endomorphism-valued forms]
\label{rem:convergence-1-1-tensor-valued-form}
	In the vector-valued case, we have successfully established that the discrete exterior derivative converges under refinement and satisfies the Bianchi identities. These results hold great significance within the context of our study. We showed that the differential Bianchi Identity holds in an exact sense. In the definition of the exterior covariant derivative for $(1,1)-$tensor valued forms, we used an alternation of the PPF-derivative, although the differential Bianchi Identity is already enforced for the PPF-derivative exactly. However, for the analysis of convergence under refinement of general endomorphism-valued forms, it is necessary to use an alternation of the PPF-derivative as in the vector-valued case. It turns out that the convergence analysis for arbitrary endomorphism-valued forms can be carried out similarly to the vector-valued case. We saw that the alternated PPF-derivative realizes a discrete notion of the identity \[d^{\nabla^{\mathrm{End}}}d^{\nabla^{\mathrm{End}}}\beta = [\Omega^\nabla \wedge\beta],\] for $\beta\in \Omega^\ell(M,\mathrm{End}(E))$.
	For the discretization of a general endomorphism-valued form through integration, we can, in contrast to the curvature form, not expect in general that the integration can be carried out as a boundary integral exactly. As in the case for vector-valued case, one can therefore use a discretization in a parallel-propagated frame for 
	\[d^{\nabla^{\mathrm{End}}}\beta = d\beta + \omega\wedge\beta + (-1)^{\mathrm{deg}(\beta)}\beta\wedge\omega.\]
	However, in contrast to the vector-valued case it is necessary to use a PPF-for the integration for the output \emph{and} input fiber. If we define the discretization through integration for an endomorphism-valued form through 
	\[\boldsymbol{\beta}(s,v,w) = \int_{s} \mathcal{R} ^ {\nabla ,v}\beta (\mathcal{R} ^ {\nabla ,w}) ^{-1},\]
    it is possible to show that for $\beta\in \Omega^\ell(M,\mathrm{End}(E))$ it holds 
	\[\mathfrak{d}^{\boldsymbol{\nabla}}\boldsymbol{\beta}(s,v,w) = \int_{s} R_{v,c_s} (d^{\nabla^{\mathrm{End}}}\beta)^{\nabla,\:{c_s,c_s}} R_{c_s,w} + \mathcal{O}(h^{\ell+2})\] and 
	\[\bolddnab\boldsymbol{\beta}(s,v,w) = \int_{s} R_{v,c_s} (d^{\nabla^{\mathrm{End}}}\beta)^{{\nabla,\:c_s,c_s}} R_{c_s,w} + \mathcal{O}(h^{\ell+3}),\]
	where $(d^{\nabla^{\mathrm{End}}}\beta)^{\nabla,{\: c_s,c_s}}= R^{ \nabla ,\: c_s}\ d^{\nabla^{\mathrm{End}}}\beta\  (R^{ \nabla , \: c_s}) ^{-1}$ is the representation of the form $d^{\nabla^{\mathrm{End}}}\beta$ in the frame that uses a center based PPF for the input and the output of the form.
	This allows to conclude that for a simplicial $(\ell+2)-$cell $s'$ it holds
	\[\bolddnab\bolddnab\boldsymbol{\beta}(s',v,w) = \int_{s'} R_{v,c_{s'}}(d^{\nabla^{\mathrm{End}}}d^{\nabla^{\mathrm{End}}}\beta)^{{\nabla\ c_s,c_s}} R_{c_{s'},w} + \mathcal{O}(h^{\ell+3}),\] ensuring convergence under refinement.
	\end{remark}			
\subsection{Numerical Testing}
\label{sec:numVerif}
 We now report on our tests of the numerical validity of the formulas we propose in this paper. For all examples, we start from a connection $1-$form $\omega$ and a differential form, both given in closed form, which we discretize on a sample mesh. We then compare the smooth (closed-form) exterior derivative and curvature to the results obtained by our discrete operators when applied to the discrete 1-connection and the discrete form using increasingly fine meshes, in order to observe the convergence rates of our operators. Note that our Python code for all these tests is available at \url{https://gitlab.inria.fr/geomerix/public/fulldec}. 
				
\subsubsection{Setup}
\paragraph{Computing discrete connection.} 
For a given analytical connection $1-$form $\omega$ in the standard frame of $\R^3$ and an edge $[a,b]$ (where $a,b\in \R^3$) defined as the straight path $\gamma_{ab}\colon [0,1]\to \R^3\colon t\mapsto (1-t)\cdot a + t\cdot b,$  the discrete parallel transport $R_{ab}$ induced by the connection is
\[R_{ab} = P\ \mathrm{exp}\left(\int_{\gamma}\omega\right).\]
We approximate this discrete path ordering by splitting the interval $[a,b]$ into $N$ sub-intervals. If we define the $i^\text{th}$ path segment via
$\smash{\gamma_i\colon \left[i/N,(i+1)/N\right]\to \R^3\colon t\mapsto (1-t)\cdot a + t\cdot b}$, a discrete parallel transport is computed as 
\begin{equation}
\label{eq:rotation_discretization}
R_{ab} \approx \displaystyle\prod_{i=0}^{N-1} \mathrm{exp}\left(\int_{\gamma_i}\omega \right),
\end{equation}
where the matrix exponential on the right is evaluated \emph{without} path ordering. 

\paragraph{Discretization of vector-valued forms.}
As described in Sec.~\ref{sec:issue-discretization-discrete-d}, we enforce that the discretization of the forms is carried out in a parallel-propagated frame field. That is, given a smooth $T\R^3-$valued $k-$form $\alpha$, a simplex $\sigma\subset \R^3$ with a vertex $v\in\sigma$ and a smooth connection $1-$form $\omega$, the discretization described in Eq.~\eqref{eq:vertex-based-discretization} is$$\smash{\boldsymbol{\alpha} (\sigma ,v) \coloneqq 
\int_{s} R^{\scriptscriptstyle\nabla\!,v}\alpha}.$$
Here, we assume that the underlying chosen local frame field is the standard frame of $T\R^3$, and $R^{\nabla,v}\colon \sigma\to \mathrm{SO}(3)$ is the gauge field turning this standard frame into the PPF centered at $v$.
In our numerical tests, we apply quadrature rules to approximate this integral.
For instance, for the case of a one-form represented in the standard frame of $T\R^3$ via \[\alpha = \begin{pmatrix}
    \alpha_1\\ \alpha_2 \\ \alpha_3
\end{pmatrix}\in \Omega^1(\R^3,T\R^3),\]
where $\alpha_1,\alpha_2,\alpha_3\in \Omega^1(\R^3)$ are standard scalar-valued differential forms. For an edge $\gamma:[0,1]\to \R^3,$ where $\gamma(0)=a$ and $\gamma(1)=b,$ we denote the evaluation of the smooth differential form $\alpha\in \Omega^1(M,\mathbb{R}^3)$ at a point $p\in e=\gamma([0,1])$  for the vector $b-a\in T_p\mathbb{R}^3$ through $\alpha_p(b-a)$, where we use the canonical identification induced by the standard frame of $\mathbb{R}^3$ of $T_a \mathbb{R}^3$ with $T_p \mathbb{R}^3$. Writing the gauge field $R^{\nabla,a}$ as \[R^{\nabla,a} = 
\begin{pmatrix}
	-& R_{a_1}&-\\
	-& R_{a_2}&-\\
	-& R_{a_3}&-
\end{pmatrix} 
= (R_{a_{i}})_j \in SO(3) \text{ for } (i,j)\in\{1,2,3\},\]
it holds by definition that
\begin{equation}
\boldsymbol{\alpha}([a,b],a) = \int_{\gamma} R^{\nabla,a} \alpha =\int_{[0,1]}\gamma^\ast(R^{\nabla,a})\ \gamma^\ast\alpha =  \int_{[0,1]} \begin{pmatrix}{R_{a_1}}({\gamma(t)})\cdot \alpha_{\gamma(t)}(b-a)\\ {R_{a_2}}({\gamma(t)})\cdot \alpha_{\gamma(t)}(b-a)\\ {R_{a_3}}({\gamma(t)})\cdot\alpha_{\gamma(t)}(b-a) \end{pmatrix}\ dt,   \label{eq:integral-in-ppf-discrete}
\end{equation}
where we use the standard matrix product for the multiplication of a row with a column vector.
In this case, we get the integral of the first component, for instance:
	\begin{align*}
		&\int_{\gamma} R_{a_{11}}\ \alpha_1 + R_{a_{12}}\ \alpha_2 + R_{a_{13}}\ \alpha_3\\
        &= \int_{0}^{1} R_{a_{11}}(\gamma(t))\ \alpha_{1_{\gamma(t)}}(\dot{\gamma}(t)) + R_{a_{12}}(\gamma(t))\ \alpha_{2_{\gamma(t)}}(\dot{\gamma(t)}) + R_{a_{13}}(\gamma(t))\ \alpha_{3_{\gamma(t)}}(\dot{\gamma}(t))\ dt,
	\end{align*}
 and we use a $5^\text{th}-$order Gauss-Legendre quadrature to evaluate this 1D integral numerically.
The final expression of the discretization of $\alpha$ thus reads:
$$\boldsymbol{\alpha}(e,a) \approx \sum_{i = 0}^{4} w_i\ R_{a,\gamma(t_i)}\ \alpha_{\gamma(t_i)}(b-a),$$
where $w_i$ are the usual Gauss-Legendre quadrature weights. 
                            
Similarly, if we want to discretize a smooth $(1,1)-$tensor valued $1-$form $\beta$ over $e = [a,b]$ with evaluation at $a$ and cut at $b$, we discretize the integral
    \[
      \boldsymbol{\beta}([a,b],a,b) = \int_{e} R^{\nabla,a}\ \beta\ (R^{\nabla,b})^{-1}.           
    \]
In this case, we approximate the integral through 
$$\boldsymbol{\beta}(e,a,b) \approx \sum_{i = 0}^{4} w_i\ R_{a,\gamma(x_i)}\ \beta_{\gamma(x_i)}(b-a)\ R_{\gamma(x_i),b} .$$

Now, if we are given a vector-valued $2-$form $\alpha$ and we want to calculate its discretization over a triangle  $\sigma = [a,b,c]$ evaluated at $a$, we discretize the 2D integral 
\[
\boldsymbol{\alpha}(\sigma,a) = \int_{\sigma} R^{\nabla,a}\alpha.
\]
We thus use 2D quadrature rules (provided, for instance, in~\cite{Burkardt2010}) to approximate this integral. Similarly, we use for $(1,1)-$tensor valued forms the integral 
\[
\boldsymbol{\beta}(\sigma,a,b) = \int_{\sigma} R^{\nabla,a}\beta (R^{\nabla,b})^{-1}
\]
and apply 2D quadrature to obtain the discretization of the form over the triangle.

\paragraph{Convergence plots.} For each of our examples, we perform our evaluation on a given mesh element, before scaling the element down while keeping the evaluation vertex fixed. For each scale, we calculate the associated relative error, i.e. the norm of the difference between the result of the discrete operators acting on discretized forms to the value of the integral of the smooth ground-truth result, divided by the norm of this ground-truth integral --- providing a plot showing how fast our discrete approximants converge to their continuous counterparts.

\subsubsection{Various numerical tests}

For the connection $1-$form $\omega$ given as
\begin{equation}
    \label{eq:sample-connection-test}
    \omega = \begin{pmatrix}
        0&0&1\\
        0&0&0\\
        -1&0&0
    \end{pmatrix}\ x\ dz + \begin{pmatrix}
        0&1&0\\
        -1&0&0\\
        0&0&0
    \end{pmatrix}\ (y\ dx + dz),  
\end{equation}
one can derive its torsion as
\[\Theta^\nabla = d^\nabla\theta = \begin{pmatrix}
    y\ dx\wedge dy - dy\wedge dz\\
    dx\wedge\ dz \\
    x\ dx\wedge dz
\end{pmatrix},\]
where $\theta=(dx,dy,dz)^T.$
Further, the exterior derivative of the torsion yields
\[d^\nabla\Theta^\nabla = d\Theta^\nabla + \omega\wedge\Theta^\nabla = \Omega^\nabla\wedge \begin{pmatrix}
    dx\\
    dy\\
    dz
\end{pmatrix} = \begin{pmatrix}
    0 \\
    0 \\
    -xy
\end{pmatrix}\  dx\wedge dy\wedge dz ,\] where
\[\Omega^\nabla = \begin{pmatrix}
    0& -dx\wedge dy&  dx\wedge dz\\
    dx\wedge dy&0& -xy\ dx \wedge dz\\
    -dx\wedge dz& xy\ dx\wedge dz&0
\end{pmatrix}.\]

\begin{figure}[htb] \vspace*{-3mm}
    \centering
    \includegraphics[width =0.65\linewidth]{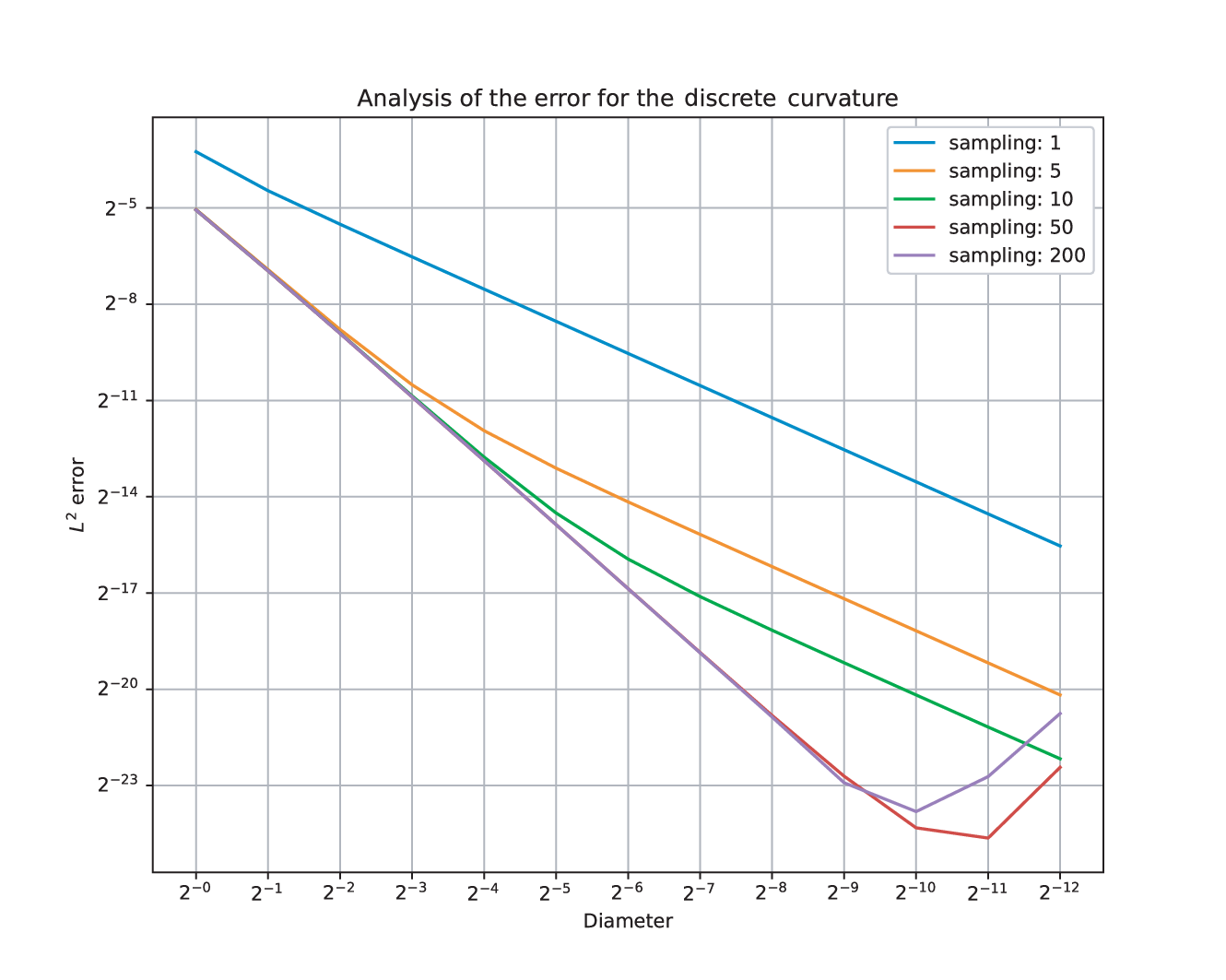} \vspace*{-5mm}
    \caption{\textbf{Convergence of discrete curvature.} The discrete curvature converges to the integrated (continuous) curvature over a center-based parallel propagated frame field; insufficient sampling for the integration of the smooth connection (Eq.~\eqref{eq:rotation_discretization}) affects the convergence rate.\label{fig:decay_curvature} \vspace*{-3mm}}
\end{figure}

\paragraph{Convergence of the Discrete Curvature.}
To verify the convergence of our discrete curvature, we construct a triangle from the three following points: $v_0 \!=\! \begin{pmatrix}
    0,0,0\\
\end{pmatrix}\smash{^T},\quad v_1 \!=\! \begin{pmatrix}
    1,0,0\\
\end{pmatrix}\smash{^T},\quad v_2 \!=\! \begin{pmatrix}
    0.4,0.3,0
\end{pmatrix}\smash{^T},$ which we subsequently shift by the vector $v \!=\! \begin{pmatrix}
    2.4, -1.3,2.9
\end{pmatrix}\smash{^T}$  and rotate using Euler angles $(30^\circ,45^\circ,27^\circ)$ around the $X-$,$Y-$,$Z-$axes to ensure arbitrariness.
We calculate the relative error of the discrete curvature compared to the integral of the smooth curvature form in a center-based PPF with pre- and post-composition to the corresponding evaluation and cut fiber. The resulting convergence plot is in Fig.~\ref{fig:decay_curvature}.

\noindent We note in passing here that the correct evaluation of the path ordering of the matrix exponential is essential to find the correct convergence order ($\mathcal{O}(h^2)$) for the discrete curvature compared to the integral of the smooth curvature in a center-based PPF. 

\begin{figure}[t]
        \centering
        \includegraphics[width =0.6\linewidth]{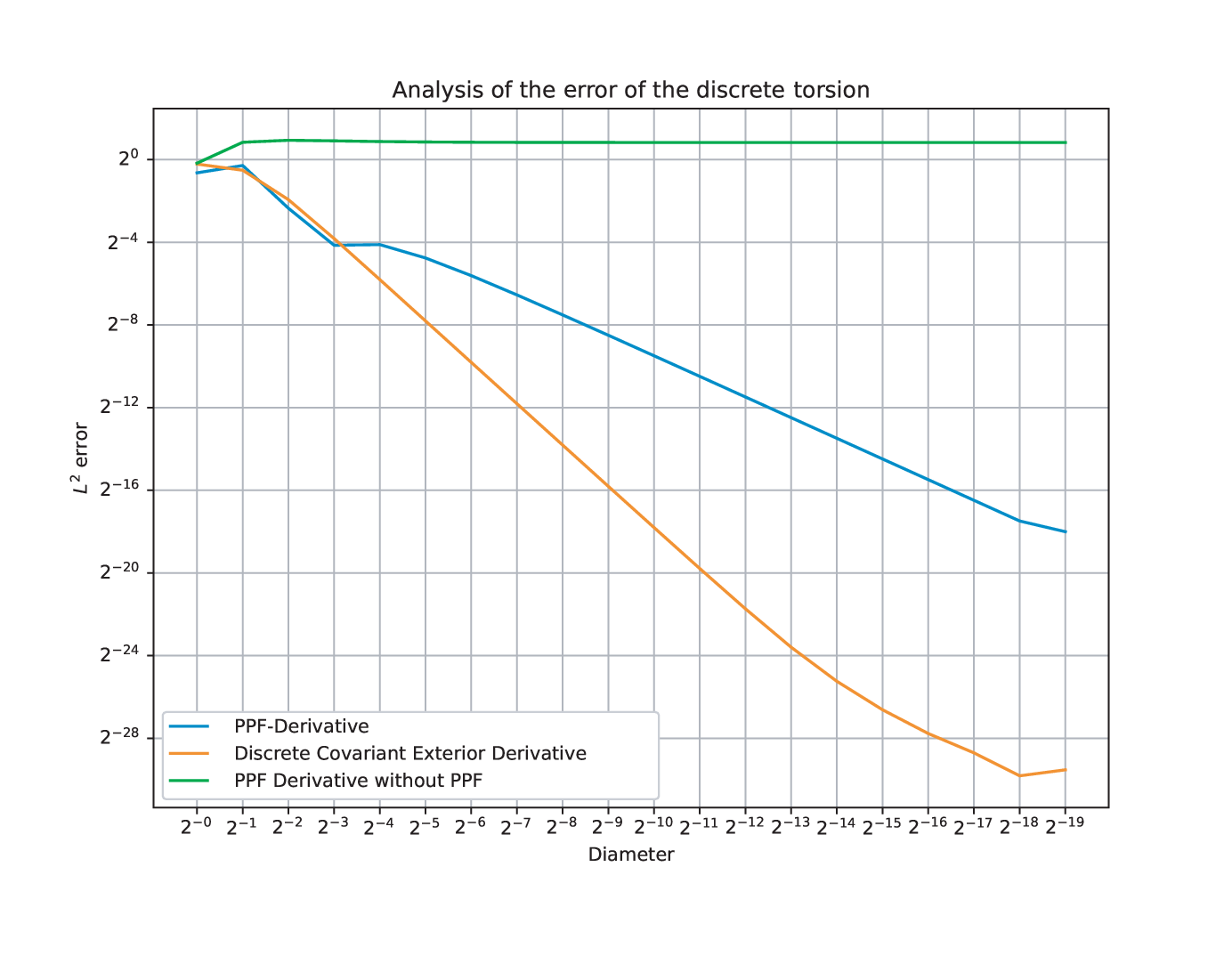}\vspace*{-7mm}
        \caption{\textbf{Convergence of discrete torsion.}
            This graph illustrates that employing a non-parallel propagated frame field for discretizing the solder form fails to yield convergence of the PPF-induced exterior covariant derivative. Utilizing a PPF for the discretization of a form and a subsequent use of the PPF derivative results in a linear decay of errors. Notably, the alternation process further enhances accuracy, leading to a quadratic decay in errors for our full discrete covariant exterior derivative. \vspace*{7mm}\label{fig:different_realizations_torsion}}
\end{figure}

\paragraph{Convergence \!of\! Discrete Covariant Derivative \!of Bundle-valued $\!1-$Forms.}
To numerically test the validity of  Theorem~\ref{thm:alternation-augments-accuracy}, we consider the triangle $[v_0,v_1,v_2]$ that we used above. We shift the triangle by a vector $v = (1.0,4.8,-2.9)\smash{^T}$ and use Euler angles $(30^\circ,25^\circ,10^\circ)$ around the $X-$,$Y-$,$Z-$axes to ensure arbitrariness.
We calculate the integral of the smooth torsion in a barycenter based PPF, transported to $v_0$. Subsequently we compute the sided PPF-induced discrete covariant exterior derivative $\boldsymbol{\mathfrak{d}^\nabla \theta}$ and compute the relative error to the smooth ground truth (blue). Further, we calculate the discrete covariant exterior derivative and calculate the relative error with the smooth ground-truth integral (orange). In addition, we illustrate that the discrete covariant exterior derivative operator presented in \cite{Hirani_Bianchi} does not converge under refinement if the input is obtained through discretization without the usage of a PPF (green).
In Fig.~\ref{fig:different_realizations_torsion}, the size of the triangle is halved in each refinement step. As our theorems stated, we observe that the decay of the relative error for the PPF-derivative is indeed linear, with a quadratic convergence order when alternation is used, as can be seen in the different slopes in the logarithmic scale of the error plots.
Additionally, the tests show that it is essential to use a parallel-propagated frame for the discretization of the differential forms, otherwise convergence cannot be achieved. \\

\noindent
Since the case of the solder form is special, we also illustrate the convergence behavior proven in Theorems~\ref{thm:ppf-derivative-linear-decay} and~\ref{thm:alternation-augments-accuracy} for a more general differential form, which we pick to be:
\begin{equation}
    \label{eq:sample-differential-$1-$form}
    \alpha = \begin{pmatrix} 2x\ dy\\ x\ dx \\ dz - z\ dy\end{pmatrix},
\end{equation}
with the connection introduced in Eq.~\eqref{eq:sample-connection-test}. In this  test,
the same test triangle is shifted by a vector $v = (2.4,-1.3,2.9)$ and is rotated using Euler angles $(30^\circ,45^\circ,27^\circ)$ for $X-$,$Y-$,$Z-$axes. The triangle size is each time reduced by a factor two, leading to the convergence plot in Fig.~\ref{fig:both-notions-converging-covariant-faster}: the alternation operator indeed improves the convergence order of the PPF-derivative. 
\begin{figure}[htb] \vspace*{-3mm}
    \centering
    \includegraphics[width = 0.6\linewidth]{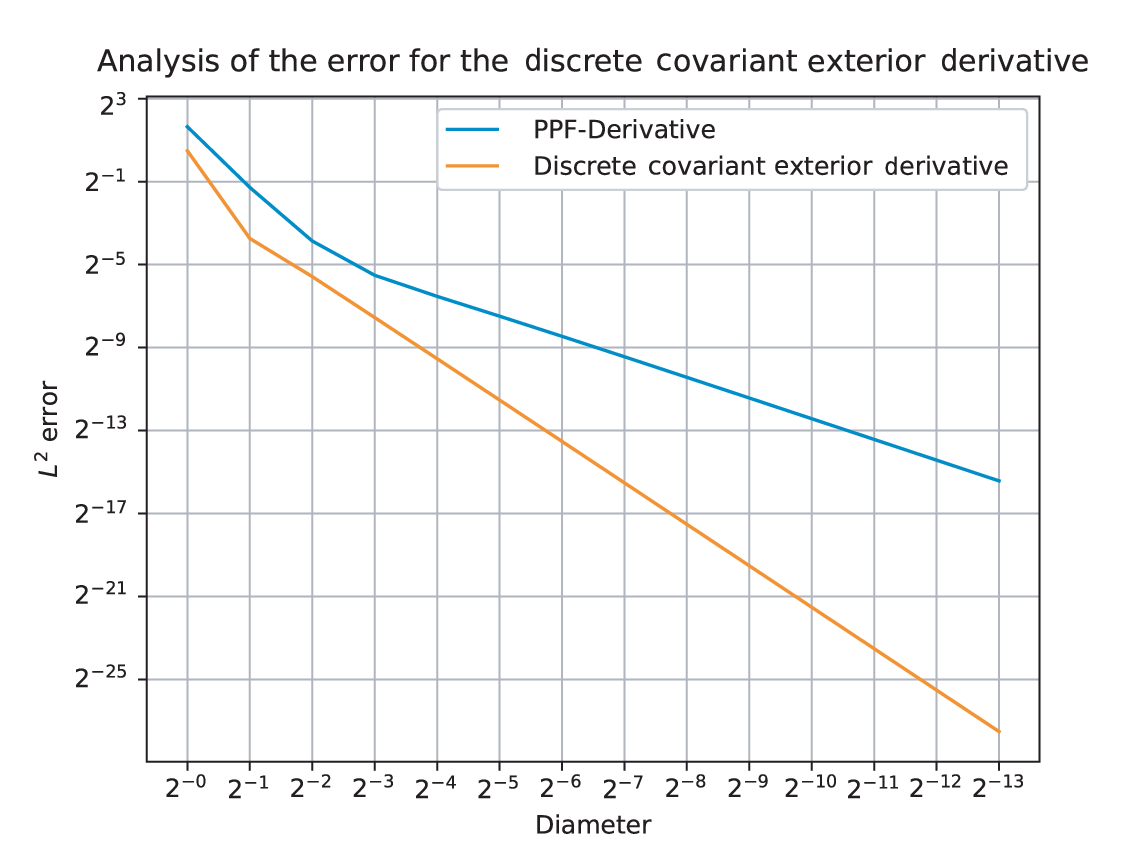}\vspace*{-2mm}
    \caption{\textbf{Discrete covariant exterior derivative for vector-valued $1-$forms.} Comparison of the decay of the relative error for the PPF-derivative and the exterior covariant derivative for the $1-$form $\alpha$ given in Eq.~\eqref{eq:sample-differential-$1-$form}. A different slope in the logarithmic plot indicates a different convergence order (here, linear in blue vs. quadratic in orange).\vspace*{-5mm}}
    \label{fig:both-notions-converging-covariant-faster}
\end{figure}

\paragraph{Convergence \!of Discrete Covariant Derivative \!of Bundle-valued $\!2-$Forms.\!}
In order to check Theorems ~\ref{thm:ppf-derivative-linear-decay}-\ref{thm:alternation-augments-accuracy} for vector-valued $2-$forms as well, we construct a tetrahedron with points 
$p_0 \!=\! \smash{\begin{pmatrix}0,0,0\end{pmatrix}\!\vphantom{a}^T}$, 
$p_1 \!=\! \smash{\begin{pmatrix}\mathrm{sin}(\frac{5\pi}{3}),\mathrm{cos}(\frac{5\pi}{3}),1\end{pmatrix}\!\vphantom{a}^T}$,
\smash{$p_2 \!=\! \begin{pmatrix}\mathrm{sin}(\frac{\pi}{3}),\mathrm{cos}(\frac{\pi}{3}),1\end{pmatrix}\!\vphantom{a}^T$},
\smash{$p_3 \!=\! \begin{pmatrix}\mathrm{sin}(\pi),\mathrm{cos}(\pi),1\end{pmatrix}\!\vphantom{a}^T$}.
We use Euler angles $(50^\circ,15^\circ,70^\circ)$ to rotate the tetrahedron, and shift the tetrahedron by $v \!=\! (3.4,-1.8,3.9)$ for arbitrariness.
For the vector-valued differential $2-$form 
\begin{equation}
    \label{eq:sample-$2-$form}
    \alpha = \begin{pmatrix}
        3y\ dy\wedge dz + x^2 dz\wedge dx\\
        (xyz + 1)\ dx\wedge dy\\
        xy\ dy\wedge dz
    \end{pmatrix}
\end{equation}
and the connection introduced in Eq.~\eqref{eq:sample-connection-test}, we illustrate convergence under mesh refinements for the discrete PPF-induced derivative and the discrete covariant exterior derivative in Fig.~\ref{fig:decay_d_nabla_two_form}. 

\begin{figure}[htb]\vspace*{-3mm}
    \centering
    \includegraphics[width = 0.55\linewidth]{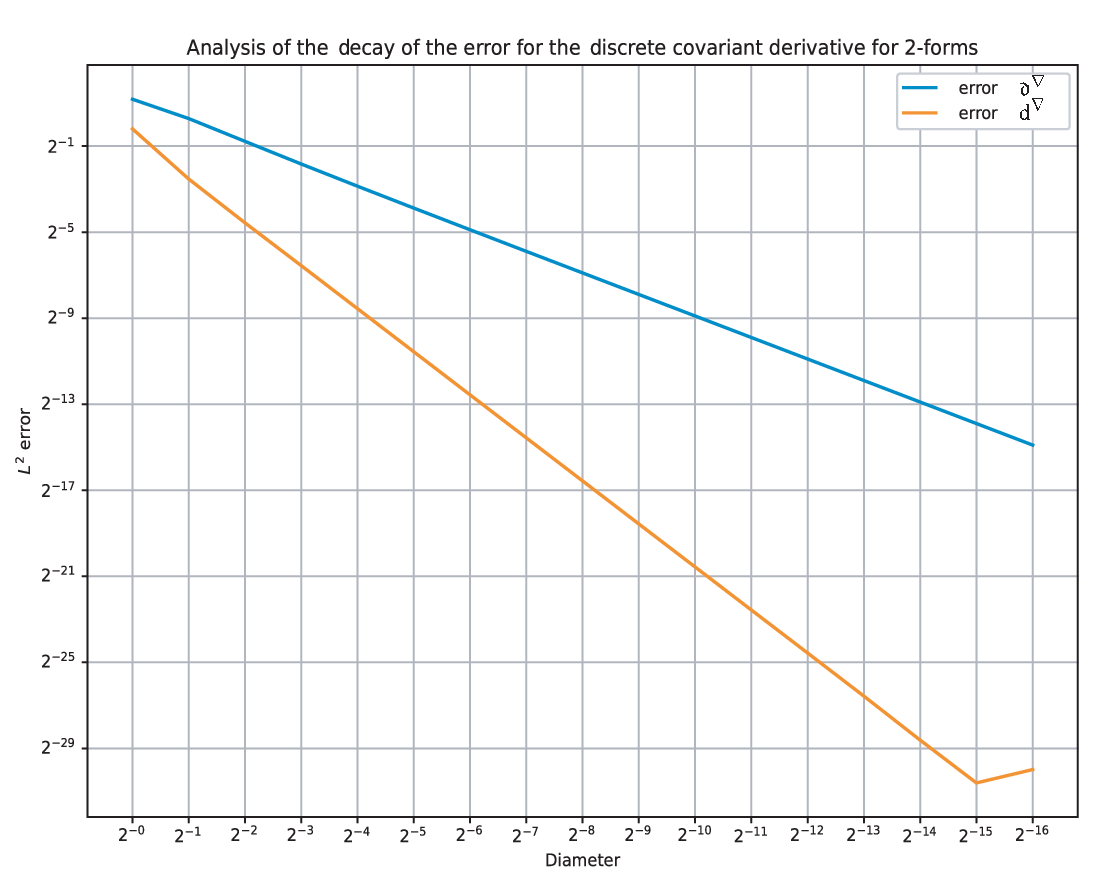}\vspace*{-2mm}
    \caption{\textbf{Discrete covariant exterior derivative for vector-valued $2-$forms.} The alternation operator for discrete vector valued $2-$forms yields, as in the case for $1-$forms, an improved accuracy under refinement compared to the PPF-induced discrete covariant exterior derivative (orange vs. blue curve).\vspace*{-3mm}}
    \label{fig:decay_d_nabla_two_form}
\end{figure}

\paragraph{Algebraic Bianchi for Vector-valued Forms}
To check the convergence of the discrete algebraic Bianchi identity to its continuous counterpart, we consider the connection introduced in Eq.~\eqref{eq:sample-connection-test}. We compare the primal discrete exterior derivative $\mathfrak{d}^{\boldsymbol{\nabla}}\bolddnab$ for the solder form  (orange) against the discrete covariant exterior derivative $\bolddnab\bolddnab$ (blue). We use the tetrahedron introduced above shifted by a vector $v = (3.4,-1.8,3.9)\smash{^T}$ 
and subsequently rotated by three matrices through the Euler angles $(50^\circ,15^\circ,70^\circ)$. 
We observe in Fig.~\ref{fig:decay_algebrac_bianchi_identity} that the convergence is of the same order. Further, we illustrate that two consecutive applications of the operator $\boldsymbol{\mathfrak{d}^\nabla}$ do not converge under refinement, even if the input form is obtained through discretization in a PPF. 

\begin{figure}[!h] \vspace*{-3mm}
    \centering
    \includegraphics[width = 0.6\linewidth]{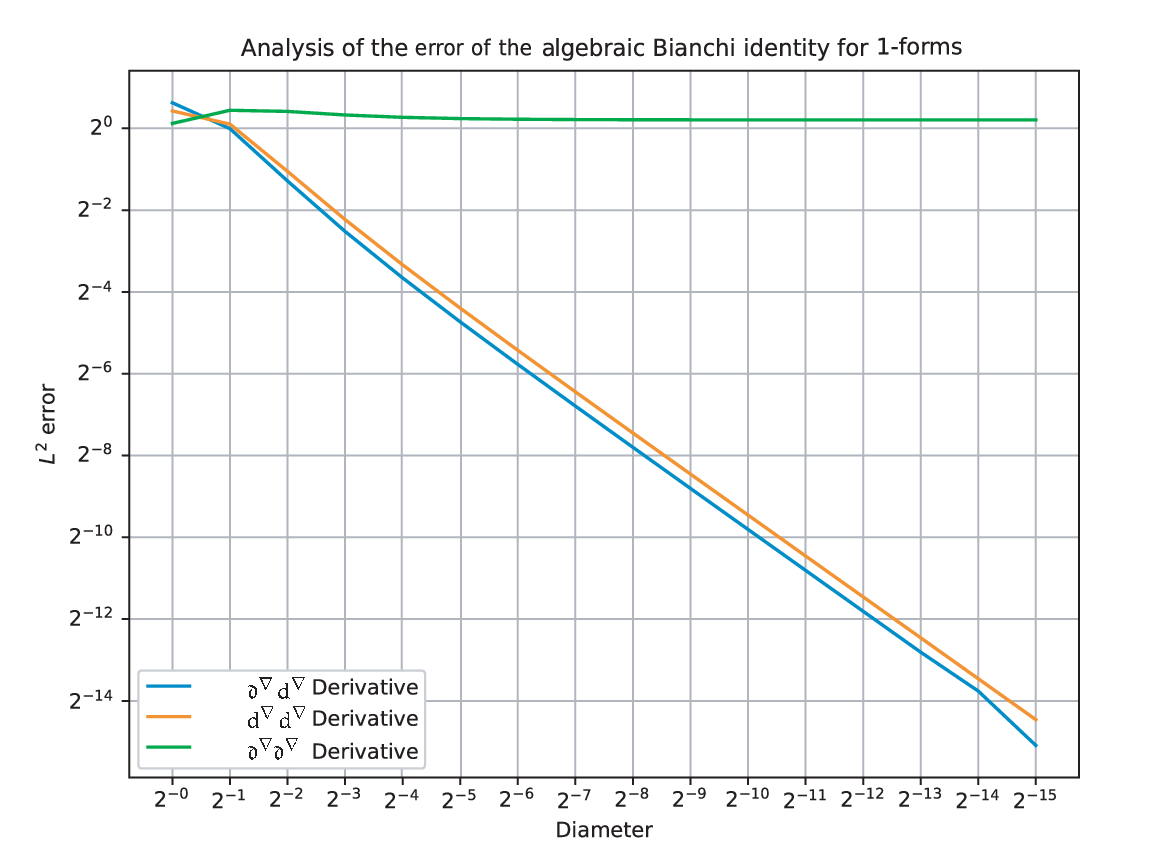}\vspace*{-2mm}
    \caption{\textbf{Algebraic Bianchi identity under refinement for vector-valued $1-$forms} Using the PPF-Derivative of the alternated torsion or the discrete exterior covariant derivative of the alternated discrete torsion both lead under refinement to the same convergence order. However, two consecutive applications of the operator $\mathfrak{d}^{\boldsymbol{\nabla}}$ without alternation in between to improve accuracy do not lead to convergence under refinement, even if the original form is discretized in a parallel-propagated frame field (green curve).}
        \label{fig:decay_algebrac_bianchi_identity}
\end{figure}\medskip

\paragraph{Discrete Covariant Exterior Derivative for $(1,1)-$Tensor-valued One-Forms}
We also illustrate the numerical behavior of the discrete covariant exterior derivative for \((1,1)\)-tensor valued forms.
Given a $(1,1)-$tensor-valued $1-$form
\begin{equation}
    \label{eq:1-1-tensor-valued-beta}
    \beta =\begin{pmatrix}
        0& -x dy& 0\\
        x dy& 0& dz\\
        0&-dz&0
    \end{pmatrix}\in \Omega^1(\R^3,\mathrm{End}(T\R^3)) 
\end{equation}
and the connection $1-$form introduced in Eq.~\eqref{eq:sample-connection-test}, one can calculate that
\[ d^{\nabla^{\mathrm{End}}}\beta = \begin{pmatrix}
    0& -\ dx\wedge dy& y\ dx\wedge dz\\
    dx\wedge dy& 0& x^2 dy\wedge dz\\
    -y\ dx\wedge dz& -x^2\ dy\wedge dz& 0
\end{pmatrix}.\]
We translate the usual triangle we used previously with a vector $v \!=\! (3.5,1.1,-1.2)^T$ and use Euler angles $(30^\circ,45^\circ,27^\circ)$ to rotate the evaluation domain.
We then calculate the discrete endomorphism-valued derivative and calculate the relative error against the smooth ground-truth value, resulting in Fig.~\ref{fig:discrete_endomorphism_valued_forms}.

\begin{figure}[htb] \vspace*{-3mm}
    \centering
    \includegraphics[width =0.55\linewidth]{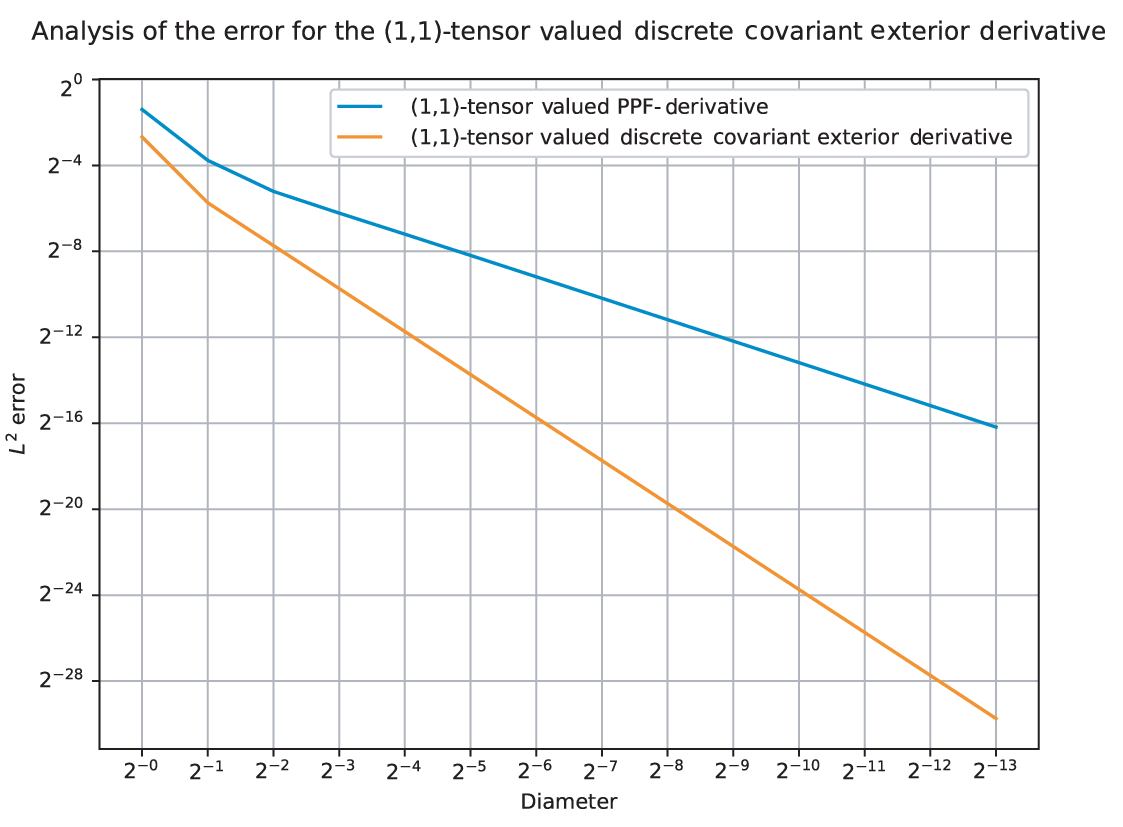}
    \vspace*{-2mm}
    \caption{\textbf{Discrete covariant exterior derivative for $(1,1)-$tensor-valued forms} As in the $(1,0)-$tensor-valued case, we observe that the decay of the relative error for the PPF-derivative for $(1,1)-$tensor-valued forms is linear, and the endomorphism-valued alternation improves the accuracy of the PPF-derivative for a faster decay of the error.\vspace*{-4mm}}
    \label{fig:discrete_endomorphism_valued_forms}
\end{figure}

\paragraph{Discrete Covariant Exterior Derivative for $(1,1)-$Tensor-valued Two-Forms.}
For the endomorphism-valued $2-$form \[\beta = \begin{pmatrix}
    0&x^2\ dy\wedge dz & 2z\ dx\wedge dy\\
    -x^2\ dy\wedge dz &0&0\\
    -2z\ dx\wedge\ dy&0&0
\end{pmatrix},\]
the exterior covariant derivative can be calculated for the connection introduced in Eq.~\eqref{eq:sample-connection-test} as 
\[d^{\nabla^{\mathrm{End}}}\beta = \begin{pmatrix}
    0&2x&2\\
    -2x&0&-2z\\
    -2&2z&0
\end{pmatrix}\ dx\wedge dy\wedge dz.\]
Translating the tetrahedron introduced earlier with $v = \begin{pmatrix} 3,3,2 \end{pmatrix}\!\smash{^T}$
and using Euler angles $(30^\circ,10^\circ,190^\circ)$ to rotate the tetrahedron, we can numerically verify in Fig.~\ref{fig:decay_d_nabla_end_two_form} the convergence behavior explained in Remark~\ref{rem:convergence-1-1-tensor-valued-form}.  

\begin{figure}[htb] \vspace*{-8mm}
    \centering
    \includegraphics[width = 0.55\linewidth]{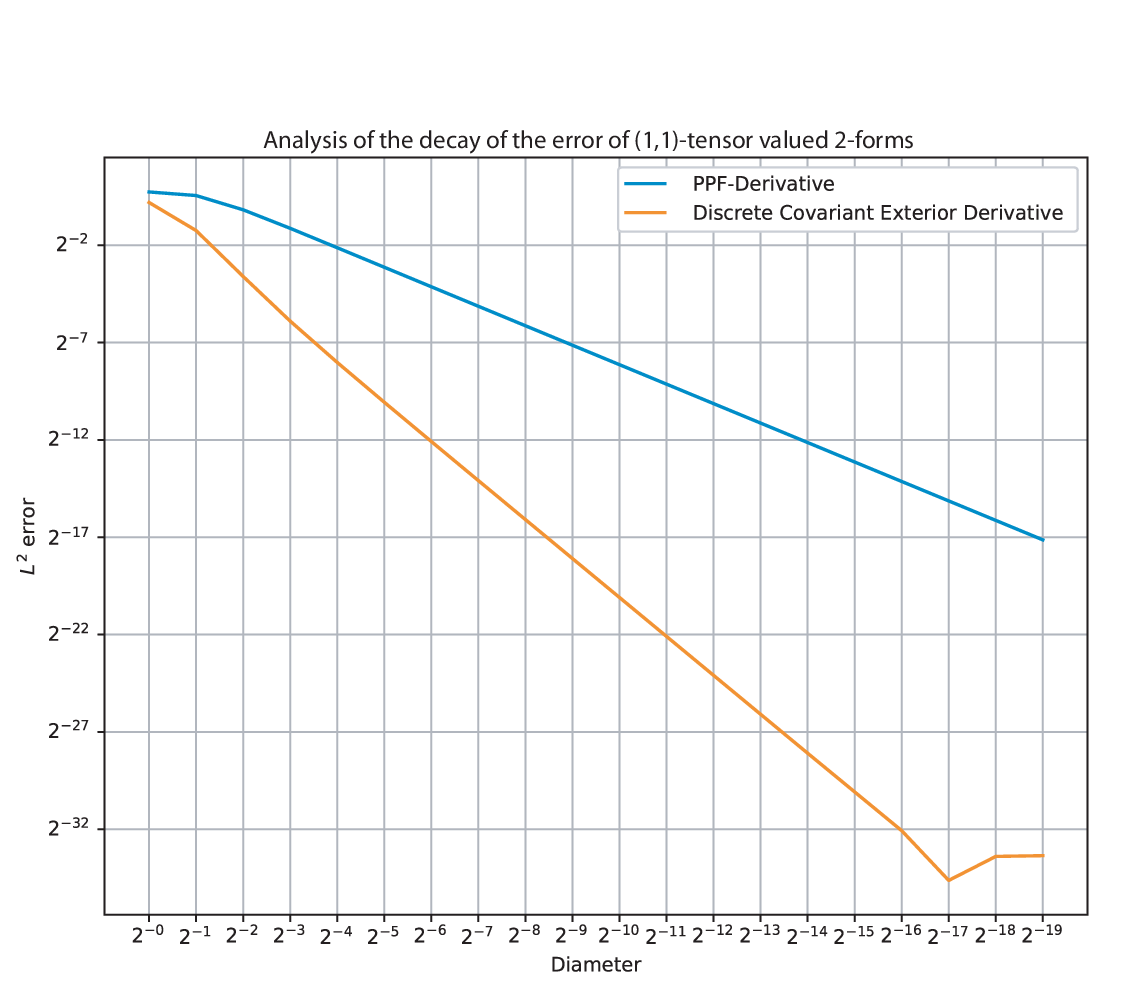}\vspace*{-2mm}
    \caption{ \textbf{Discrete covariant exterior derivative for $(1,1)-$tensor-valued $2-$forms.} As in the $(1,0)-$ tensor-valued case, we observe that the exterior covariant derivative for $(1,1)-$tensor-valued $2-$forms converges faster under refinement compared to the PPF-induced discrete covariant exterior derivative.\vspace*{-4mm}}
    \label{fig:decay_d_nabla_end_two_form}    
\end{figure}

\paragraph{Algebraic Bianchi for $(1,1)-$Tensor-valued One-Forms}
Using the aforementioned tetrahedron rotated with  Euler angles $(60^\circ,50^\circ,120^\circ)$ and shifted by a vector $v = (6.4, -3.8, 1.9)$, we obtain for the $(1,1)-$tensor valued form introduced in Eq.~\eqref{eq:1-1-tensor-valued-beta} convergence of the algebraic Bianchi identity to its continuous counterpart under refinement as seen in Fig.~\ref{fig:decay_algebrac_bianchi_identity_endom}.

\begin{figure}[t] 
    \centering
    \includegraphics[width = 0.55\linewidth]{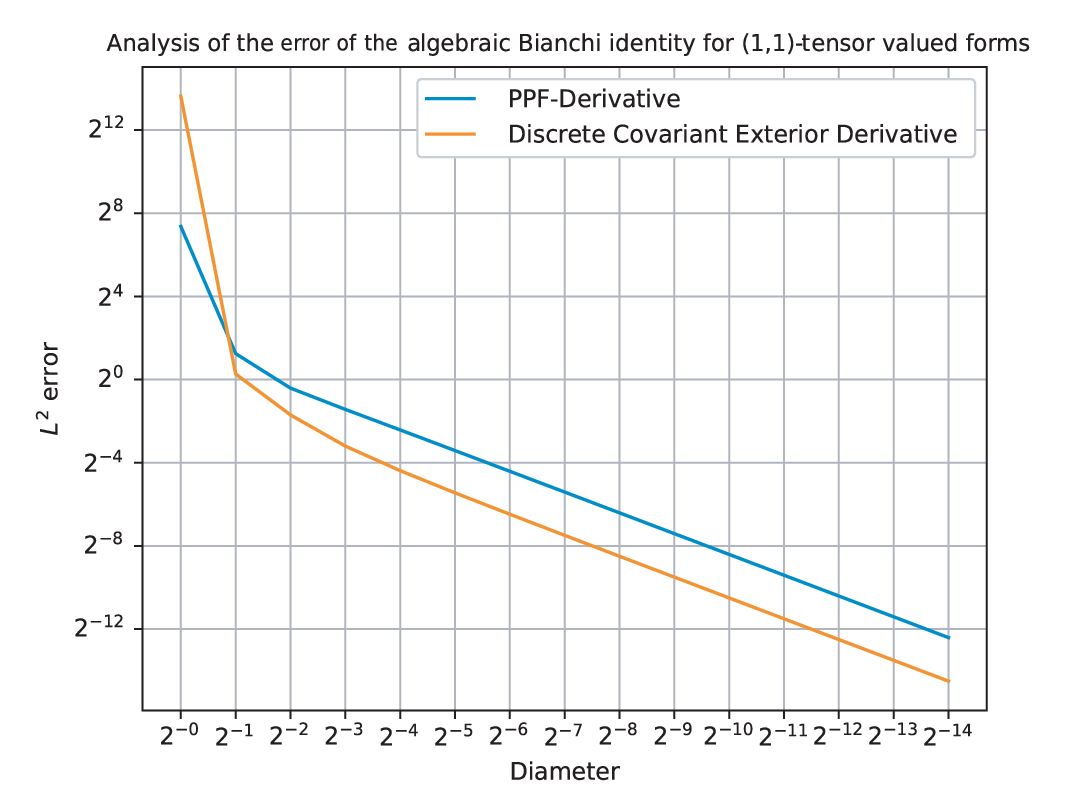}\vspace*{-3mm}
    \caption{\textbf{Algebraic Bianchi identity for $(1,1)-$tensor-valued $1-$forms.} Calculating $\mathfrak{d}^{\boldsymbol{\nabla}}\bolddnab$ and $\bolddnab\bolddnab$ under refinement yields a linear decay under mesh refinement of the relative error compared to the integral of the smooth groundtruth. As in the $(1,0)-$tensor-valued case, another alternation cannot compensate for the lack of accuracy coming from the initial input.\vspace*{-4mm}}
    \label{fig:decay_algebrac_bianchi_identity_endom}
\end{figure}
\clearpage
\begin{comment}
\end{comment}

	\section{Conclusions and Future Work}

This work introduced a discrete bundle-valued exterior calculus based on a principled discretization of continuous forms. Extending the typical chain/cochain approach to discretize scalar-valued exterior calculus, the resulting discrete operators on finite-dimensional bundle-valued forms lead to discrete counterparts of the Bianchi identities. Unlike previous approaches, we provide theoretical proofs and numerical evidence demonstrating the \emph{convergence} of these discrete operators under mesh refinement.

By laying the groundwork for bundle-valued discretization, our work calls for many research directions. To this point, our operators have been limited to simplicial cells; however, extending the discrete covariant exterior derivative operator to more general cells through generalized barycentric coordinates should be feasible, although the notion of the alternation operator will need to be carefully defined to ensure higher-order convergence still.
Furthermore, future investigations could address issues raised in Remark~\ref{rem:disc-has-to-stay-local}, such as the characterization of alignment when multiple charts are needed to parametrize a manifold. This may lead to inquiries into Euler classes or other characteristic classes of vector bundles.

Recent efforts to define a discrete Levi-Civita connection using finite elements such as \cite{finite_element_levi_civita,analysis_curvature_approximation} have not leveraged parallel-propagated frame fields. Yet, our work demonstrates that employing a parallel-propagated frame field for the discretization of bundle-valued forms significantly improves the accuracy of the discrete covariant exterior derivative. Investigating whether our notion of parallel-propagated frame fields can be integrated into these recent approaches to enhance evaluation accuracy is another promising direction for future research. Building upon the finite element approach, several works have also focused on finite element approximations for scalar curvature \cite{gawlik2023FEM_scalar_curvature,ChristiansenDiscreteCurvature,cheeger_curvature} and the description of general relativity \cite{gawlik2023FEM_einstein_tensor,Regge1961,J_W_Barrett_1986}. A promising direction for future research is to explore whether we can define geometric and structure-preserving discretizations that are numerically accurate for these operators using our presented bundle-valued discrete exterior calculus framework as a foundation --- in particular, using the de Rham map and Whitney map for bundle-valued forms define in Section~\ref{sec:revisitingDisc} to project a bundle-valued form onto a finite-dimensional space.
Furthermore, none of these approaches proposed a notion of discrete torsion. In this work, we introduced a new concept of discrete torsion, which raises the question within the context of discrete Riemannian geometry of whether it is possible to find a discrete analog of the fundamental theorem of Riemannian geometry. i.e., the existence of a unique discrete connection that is metric-preserving and has vanishing torsion.

Finally, the recent work of \cite{ChristiansenHu2023} introduced finite element systems for vector bundles to describe a discrete elasticity complex. An intriguing direction for future research is to investigate whether our approach, which increases accuracy through the use of PPFs, can be integrated into the description of continuum mechanics. Incorporating bundle-valued forms may provide insights into improved discrete representations of phenomena such as compressible fluids and nonlinear elasticity, potentially leading to more accurate simulations in these areas.
	
	\bibliographystyle{plain}
	\bibliography{bibliography_project}

	\appendix
	
	\section{Proof of Theorem.~\ref{thm:algebraic-bianchi-identity}}
\label{app:proof-restricted-skew-symmetry}
\begin{proof}
	By definition,
   $$\boldsymbol{d^\nabla d^\nabla\alpha}(\sigma,v_0)= \mathrm{Alt}^{\boldsymbol{\nabla}}\ \mathfrak{d}^{\boldsymbol{\nabla}}\ \mathrm{Alt}\ \mathfrak{d}^{\boldsymbol{\nabla}}\boldsymbol{\alpha}([v_0,\ldots,v_{\ell+2}],v_0).$$
	We will show that the discrete map $\mathfrak{d}^{\boldsymbol{\nabla}}\ \mathrm{Alt}\ \mathfrak{d}^{\boldsymbol{\nabla}}\boldsymbol{\alpha}$ can be brought in the desired form. We will argue below that a novel application of the alternation operator does not violate this and will still be of the required form in Theorem.~\ref{thm:algebraic-bianchi-identity}.
	In this case, we have 
	\begin{align*}
		& \mathfrak{d}^{\boldsymbol{\nabla}}\ \mathrm{Alt}\ \mathfrak{d}^{\boldsymbol{\nabla}}\boldsymbol{\alpha}([v_0,\ldots,v_{\ell+2}],v_0)\\
  &= \mathcal{R}_{01}\ \mathrm{Alt}\ \mathfrak{d}^{\boldsymbol{\nabla}}\ \boldsymbol{\alpha}([v_1,\ldots,v_{\ell+2}],v_1) + \sum_{i=1}^{\ell+2} (-1)^i   \mathrm{Alt}\ \mathfrak{d}^{\boldsymbol{\nabla}}\ \boldsymbol{\alpha}([v_0,\ldots,\hat{v}_i,\ldots,v_{\ell+2}],v_0)\\
		&=\mathcal{R}_{01} \frac{1}{\abs{S_{\ell+2}}}\sum_{\pi\in S_{\ell+2}} \mathcal{R}_{1\pi(1)}\ \mathrm{sgn}(\pi)   \mathfrak{d}^{\boldsymbol{\nabla}}\boldsymbol{\alpha}(\pi([v_1,\ldots,v_{\ell+2}]),v_{\pi(1)})\\
		&\quad+ \sum_{i=1}^{\ell+1}(-1)^i\frac{1}{\abs{S_{\ell+2}}}\sum_{\pi\in S_{\ell+2}}\ \mathrm{sgn}(\pi) \mathcal{R}_{0\pi(0)}\ \mathfrak{d}^{\boldsymbol{\nabla}}\boldsymbol{\alpha}(\pi([v_0,\ldots,\hat{v}_i,\ldots,v_{\ell+2}]),v_{\pi(0)}).
	\end{align*}
	In the following, we will use the notation 
    $\sigma^i:=[v_0,\ldots,\widehat{v_i},\ldots,v_{\ell+2}]$, and with it,
	$$(\pi(\sigma^i))^j = [\pi(\sigma^i)_0,\ldots,\widehat{\pi(\sigma^i)_j},\ldots,\pi(\sigma^i)_{\ell+2}],$$
	to mean that we first omit the $i^\text{th}$ vertex, then permute the $(\ell+1)-$simplex with a permutation and then, out of the induced ordering, omit the $j^\text{th}$ vertex to obtain an $\ell-$simplex.
	This yields
	\scriptsize
	\begin{align}
		& \mathfrak{d}^{\boldsymbol{\nabla}}\ \mathrm{Alt}\ \mathfrak{d}^{\boldsymbol{\nabla}}\boldsymbol{\alpha}([v_0,\ldots,v_{\ell+2}],v_0)=\notag\\&\mathcal{R}_{01} \frac{1}{\abs{S_{\ell+2}}}\sum_{\pi\in S_{\ell+2}} \mathrm{sgn}(\pi) \mathcal{R}_{1\pi(\sigma)_1} \left(
		\mathcal{R}_{\pi(s)_1\pi(\sigma^0)_1} \boldsymbol{\alpha}(\pi(\sigma^0)^0,v_{\pi(2)}) + \sum_{j=1}^{\ell+1}(-1)^j \boldsymbol{\alpha}(\pi(\sigma^0)^j, v_{\pi(\sigma)_1}) \right)\label{eq:curvature-form-wedge-product}\\
		&+\sum_{i=1}^{\ell+2}\frac{(-1)^i}{\abs{S_{\ell+2}}}\sum_{\pi\in S_{\ell+2}}\  \mathrm{sgn}(\pi) \mathcal{R}_{0\pi(\sigma)_0}\ \left( \mathcal{R}_{\pi(0),\pi(\sigma^i)_1}\boldsymbol{\alpha}(\pi(\sigma^i)^0,v_{\pi(\sigma^i)_1}) + \sum_{j=1}^{\ell+1}(-1)^j \boldsymbol{\alpha}(\pi(\sigma^i)^j,v_{\pi(\sigma)_0}) \right).\notag
	\end{align}
	\normalsize
	Each of the parts of the sum consists of a sign, the evaluation of the differential form $\boldsymbol{\alpha}$ over a permuted subsimplex with two missing vertices and a parallel transport map to the common evaluation fiber. It thus remains to show that for a given permutation $\tau\!\in\! S_{\ell+1}$, the subsimplex $\tau(\sigma^{ij})$ is used twice for the evaluation of $\boldsymbol{\alpha}$, but with a different sign. In this case, in Eq.~\eqref{eq:curvature-form-wedge-product}, the evaluation of $\boldsymbol{\alpha}$ on $\tau(\sigma^{ij})$ appears twice, with two potentially different parallel transport paths to $v_0$ and with opposite sign, leading to a discrete curvature expression to which we apply the evaluation of $\boldsymbol{\alpha}$ on the permuted sub-simplex.
  
    \noindent Without loss of generality we can assume that $i<j$. In order to create the permuted simplices, we distinguish two cases. For the first application $\mathfrak{d}^{\boldsymbol{\nabla}}$ we omit $v_i$, leading to a sign of $(-1)^i$.
    
    \noindent Starting with $\sigma^i$, since we assumed that $i\!<\!j$, we can now apply $(j-2)$ transpositions to bring $v_j$ in the first slot of the simplex. Next, we can apply the permutation $\tau$ on the last $(\ell+1)$ vertices. This leads to a sign of $(-1)^{j-2}\cdot\mathrm{sgn}(\tau)$. Now, for the second application of $\mathfrak{d}$, the vertex $v_j$ is in the $0^\text{th}$ slot --- thus the sign does not change. The sign of evaluation will then be $(-1)^{i + j-2 + \mathrm{sgn}(\tau)}$.
    
    \noindent On the other hand, consider the case where we omit first the $j^\text{th}$ vertex in the first application of $\mathfrak{d}$. This leads to a sign of $(-1)^j$. On the simplex $\sigma^j$, there are now $(i-1)$ transpositions needed to transport $v_i$ to the $0^\text{th}$ slot. We can now again apply the permutation $\tau$ on the last $(\ell+1)$ vertices. Subsequently, when applying $\mathfrak{d}$ a second time we will omit $v_i$ in the $0^\text{th}$ slot, leading to a total sign of $(-1)^{j + i -1 + \mathrm{sgn}(\tau)}$. 
    
	\noindent This implies that for each evaluation of $\boldsymbol{\alpha}$ over $\tau(\sigma^{ij})$, there exists an evaluation of $\boldsymbol{\alpha}$ over $\tau(\sigma^{ij})$, with a possibly different transport path to the common evaluation fiber over $v_0$, and more importantly a different sign for the parallel transport path, turning the expression of Eq.~\eqref{eq:curvature-form-wedge-product} into a sum of curvatures, to which we apply  evaluations of $\boldsymbol{\alpha}$. 
 
	\noindent Since the alternation of a discrete bundle-valued differential form takes care of consistent signing and transporting to common fibers, we can conclude that there exists a $K\subseteq C^2(\sigma)\times C^\ell(\sigma)$ such that 
	$$ \boldsymbol{d^\nabla d^\nabla\alpha}(\sigma) =\frac{1}{(\ell+3)!\cdot(\ell+2)!} \sum_{({m},\kappa)\in K} \ \boldsymbol{\Omega^\nabla}(f)\ \boldsymbol{\alpha}(\kappa) =: \boldsymbol{\Omega^\nabla}\wedge\boldsymbol{\alpha}.$$
\end{proof}
	
	\section{Simplification of non-commmutative expressions.}

In various parts of this paper, we used the NCAlgebra library \cite{NCAlgebra} to simplify expressions from our operators, where connection matrices and discrete 1-forms are non-commutative variables. A notebook is available at \url{https://gitlab.inria.fr/geomerix/public/fulldec}, structured in the following way: first, as illustrated in Fig.~\ref{fig:symbolic_connection}, we initialize the symbolic parallel transport matrices; then we show in Fig.~\ref{fig:symbolic_differential_form} how to initialize a symbolic vector-valued differential 1-form; finally, we see in Fig.~\ref{fig:symbolic_d_nabla} how the PPF-induced sided discrete covariant exterior derivative and discrete covariant exterior derivative can be implemented with simplified expressions.

\begin{figure}[!htb]
	\centering
	\includegraphics[width = 0.9\linewidth]{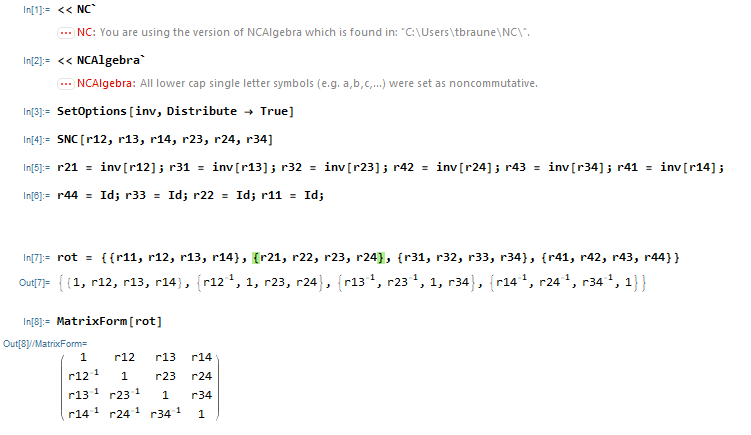}
	\caption{Definition of the symbolic variables for the connection}
	\label{fig:symbolic_connection}
\end{figure}

\begin{figure}[!htb]
	\centering
	\includegraphics[width = 1.1\linewidth]{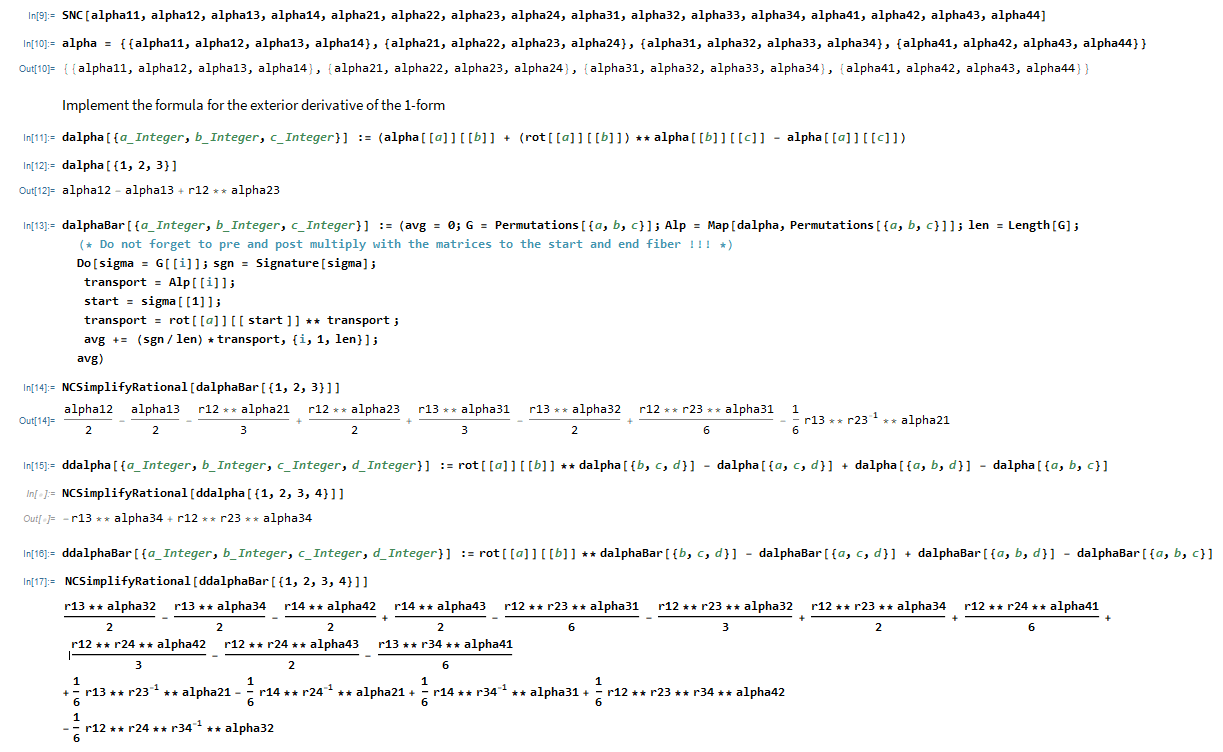 }
	\caption{Setup for the symbolic discrete 1-forms}
	\label{fig:symbolic_differential_form}
\end{figure}

\begin{figure}[!htb]
	\centering
	\includegraphics[width = 1.1\linewidth]{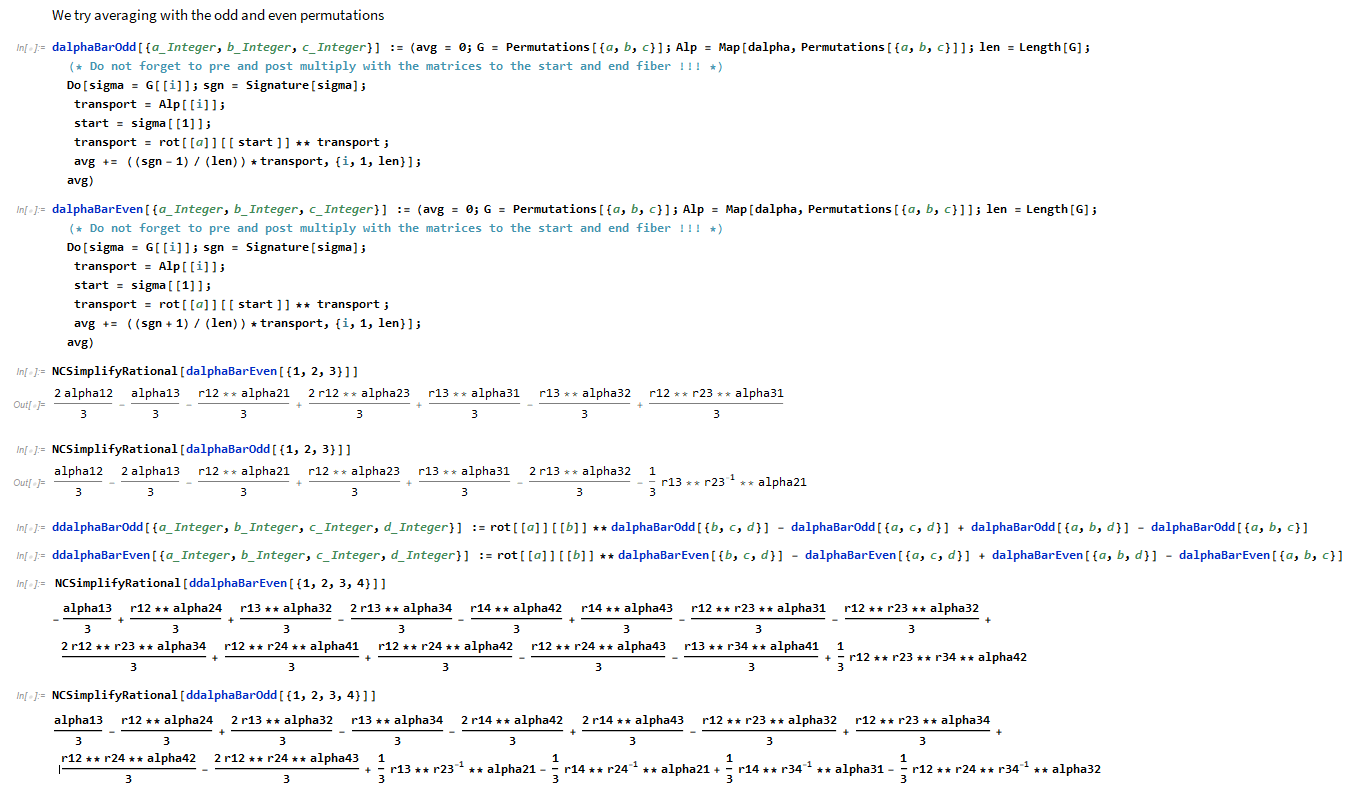}
	\caption{Implementation of the symbolic discrete covariant exterior derivative.}
	\label{fig:symbolic_d_nabla}
\end{figure}
	
	\section{Direct Calculation for the Discrete Covariant Exterior Derivative of Bundle-Valued $1-$Forms}

We want to illustrate here through a direct calculation that the discrete exterior derivative of a discretized $1-$form in the limit converges to the value of the smooth exterior derivative of the continuous $1-$form if the discretization of the differential form was carried out in a parallel-propagated frame field.\\

In the following, let $M$ be a smooth manifold and $\pi\colon E\!\to\! M$ a smooth vector bundle with connection $\nabla$ over $M$. Let $\alpha\in \Omega^1(M,E)$ be an $E-$valued differential $1-$form. Let $\sigma_2 = [abc]$ be a triangle diffeomorphic to a triangular region $s_2\!\subset\! M$. Further let $\sigma_3 = [abcd]$ an arbitrary tetrahedron diffeomorphic to a region $s_3\!\subset\! M$. As explained in Sec.~\ref{sec:convergence-analysis}, we will use the image of the simplicial cells in $M$ to carry out the discretization of $\alpha$. Once this is done, we will just regard this as discrete data available on the simplicial cells. As before, we denote by ${\boldsymbol{\alpha}}_{ab}$ the integral in a parallel-propagated frame of $\alpha$ on $[ab]$ based in $a$. We also denote by $\alpha_{ab}$ the integral in an arbitrary local frame of $\alpha$ on $[ab]$ based at $a$.

\paragraph{Convergence of the PPF-induced Sided Discrete Covariant Exterior Derivative.}
It holds for the second-order approximation of the integral in the parallel-propagated frame that ${\boldsymbol{\alpha}}_{ab}= \left(I +  \frac{\omega_{ab}}{2}\right){\alpha}_{ab} + \mathcal{O}(h^3).$ Therefore,
\begin{align*}
	\mathfrak{d}^\nabla\boldsymbol{\alpha}([abc],a) &= {\boldsymbol{\alpha}}_{ab} + R_{ab}{\boldsymbol{\alpha}}_{bc} - {\boldsymbol{\alpha}}_{ac}\\
 &=  \left(I +  \frac{\omega_{ab}}{2}\right){\alpha}_{ab} + (I+\omega_{ab})\left(I +  \frac{\omega_{bc}}{2}\right){\alpha}_{bc} - \left(I +  \frac{\omega_{ac}}{2}\right){\alpha}_{ac} + \mathcal{O}(h^3) \\
	&= ({\alpha}_{ab}+{\alpha}_{bc}-{\alpha}_{ac}) + \frac{\omega_{ab}{\alpha}_{ab}}{2} + \omega_{ab}{\alpha}_{bc}+\frac{\omega_{bc}{\alpha}_{bc}}{2} - \frac{\omega_{ac}{\alpha}_{ac}}{2}+ \mathcal{O}(h^3) .
\end{align*}
Thus, we obtain
\footnotesize
\begin{align*}
 &\mathfrak{d}^{\boldsymbol{\nabla}}\boldsymbol{\alpha}([abc],a)\\
 &= (\boldd{\alpha})_{abc} + \frac{\omega_{ab}}{2}\alpha_{ab} + \omega_{ab}(\alpha_{ac}-\alpha_{ab} )+ \frac{1}{2}(\omega_{ac}-\omega_{ab})(\alpha_{ac}-\alpha_{ab}) - \frac{\omega_{ac}}{2}\alpha_{ac} + \mathcal{O}(h^3)\\
	&=(\boldd{\alpha})_{abc} + \frac{\omega_{ab}}{2}\alpha_{ab} - \frac{\omega_{ac}}{2}\alpha_{ac} + \omega_{ab}\alpha_{ac} - \omega_{ab}\alpha_{ab} + \frac{\omega_{ac}}{2}\alpha_{ac} -\frac{\omega_{ac}}{2}\alpha_{ab} - \frac{\omega_{ab}}{2}\alpha_{ac} + \frac{\omega_{ab}}{2}\alpha_{ab}\\
	&= (\boldd{\alpha})_{abc}+\frac{\omega_{ab}\alpha_{ac}}{2}  -\frac{\omega_{ac}\alpha_{ab}}{2}+ \mathcal{O}(h^3)\\
	&= (\boldd\alpha)_{abc} + (\omega\wedge\alpha)(e_{ab},e_{ac})+ \mathcal{O}(h^3).
\end{align*}
\normalsize
In the notation \((\omega\wedge\alpha)(e_{ab},e_{ac}) \), we refer to the discretization on edges \([ab]\) and \([ac]\), respectively. This notation highlights the link to the smooth wedge product: it is as if we evaluate the smooth wedge product $\omega \wedge \alpha$ at the point $a$ on the outgoing vectors $e_{ab}$ and $e_{ac}$. This calculation emphasizes the crucial role of discretization in a parallel-propagated frame. Without this frame, only the term $\boldsymbol{d}\alpha$ would remain. However, with the employed parallel-propagated frame, we can demonstrate that this approach precisely yields the terms we would expect from the smooth counterpart.

\paragraph{Convergence of the Discrete Algebraic Bianchi Identity.}

It was shown in \cite{Hirani_Bianchi} that 
$$\mathfrak{d}^{\boldsymbol{\nabla}}\mathfrak{d}^{\boldsymbol{\nabla}}\boldsymbol{\alpha}([abcd],a) = \boldsymbol{\Omega^\nabla}([abc],a,c)\boldsymbol{\alpha}_{cd}.$$
Thus, two consecutive applications of the sided discrete covariant exterior derivative rely on a single stencil for curvature evaluation. As demonstrated in Sec.~\ref{sec:numVerif}, even when the form $\alpha$ is obtained through integration in a PPF, convergence to the smooth algebraic Bianchi identity under mesh refinement is not achieved. In the following, we will illustrate through a direct calculation for $1-$forms that the alternation operator provides a remedy. This operator transforms the resulting discrete expression into a form that closely resembles the smooth wedge product between the curvature form and the vector-valued $1-$form, up to higher-order terms.

\noindent We saw in our numerical tests that we can achieve convergence if we reduce the alternation $\mathrm{Alt}$ of the PPF-induced sided discrete covariant exterior derivative to a subset of the full permutation group that consists only of the even --- or the odd --- permutations $\tau_k$. In this case, the alternation becomes:
$$\mathrm{\widetilde{Alt}}(\alpha)[x_0,\ldots,x_k] = \frac{2}{\abs{S_{k}}}\sum_{\sigma\in \tau_{k}} R_{0,\sigma(0)}\cdot\alpha[x_{\sigma(0)},\ldots,x_{\sigma(k)}].$$
This yields for a $1-$form, after simplification with NCAlgebra \cite{NCAlgebra} on Mathematica,

\begin{align*}
	&\mathfrak{d}^\nabla(\mathrm{\widetilde{Alt}})\ \mathfrak{d}^\nabla \boldsymbol{\alpha}([abcd],a)\\
	&=\frac{1}{3}(\left(R_{ac} -R_{ab}R_{bc} \right)(\boldsymbol{\alpha}_{ca} + \boldsymbol{\alpha}_{cb} -{2}\boldsymbol{\alpha}_{cd})\\
	&\quad+\left(R_{ab}R_{bc}R_{cd} - R_{ad}\right)\boldsymbol{\alpha}_{db} - \left(R_{ab}R_{bd} - R_{ad}\right)\boldsymbol{\alpha}_{dc} + (R_{ab}R_{bd} - R_{ac}R_{cd})\boldsymbol{\alpha}_{da}).
\end{align*}
\normalsize

If we expand this formula, we obtain
\footnotesize
\begin{align*}
	&\mathfrak{d}^\nabla(\mathrm{\widetilde{Alt}})\ \mathfrak{d}^\nabla \boldsymbol{\alpha}([abcd],a)\\
    &= - \frac{1}{3}(\boldsymbol{\Omega}^\nabla([abc],a,c)(\boldsymbol{\alpha}_{ca} - \boldsymbol{\alpha}_{cd} + \boldsymbol{\alpha}_{cb} - \boldsymbol{\alpha}_{cd})+\boldsymbol{\Omega}^\nabla([abd],a,d)\boldsymbol{\alpha}_{dc}\\ &\quad+(R_{ab}R_{bc} - R_{ad}R_{dc})R_{cd}\boldsymbol{\alpha}_{db}
	- \boldsymbol{\Omega}^\nabla([abd],a,d)\boldsymbol{\alpha}_{dc} +(R_{ab}R_{bd} - R_{ac}R_{cd} )\boldsymbol{\alpha}_{da})\\
	&=-\frac{1}{3}\boldsymbol{\Omega}^\nabla([abc],a,c)(\boldsymbol{\alpha}_{ca} + \boldsymbol{\alpha}_{cb} - \boldsymbol{\alpha}_{cd}) + \frac{1}{3}\boldsymbol{\Omega}^\nabla,([abc],a,c) R_{cd}\boldsymbol{\alpha}_{db} - \frac{1}{3} \boldsymbol{\Omega}^\nabla([adc],a,c) R_{cd}\boldsymbol{\alpha}_{db}\\
	&\quad -\frac{1}{3} \boldsymbol{\Omega}^\nabla([abd],a,d) \boldsymbol{\alpha}_{dc} + \frac{1}{3} \boldsymbol{\Omega}^\nabla([abd],a,d) \boldsymbol{\alpha}_{da} - \frac{1}{3} \boldsymbol{\Omega}^\nabla([acd],a,d) \boldsymbol{\alpha}_{da}.
\end{align*}
\normalsize
\noindent We have seen that the discrete exterior derivative for $1-$forms (arising from discretization in a parallel propagated frame of a smooth differential form $\alpha$) generates terms that are of order $\mathcal{O}(h^2),$ therefore, they can be ignored to first order. It holds, for instance, that 
$$\boldsymbol{\alpha}_{ca}  - \boldsymbol{\alpha}_{cd} = - R_{ca}\boldsymbol{\alpha}_{ad}+ \underbrace{\mathfrak{d}^\nabla\boldsymbol{\alpha}([cad],c)}_{\in \mathcal{O}(h^2)}.$$ We obtain
\begin{align*}
	&\mathfrak{d}^\nabla(\mathrm{\widetilde{Alt}})\ \mathfrak{d}^\nabla \boldsymbol{\alpha}([abcd],a)\\
	 &= \frac{1}{3} \boldsymbol{\Omega}^\nabla([abc],a,c) R_{ca}\boldsymbol{\alpha}_{ad} + \frac{1}{3} \boldsymbol{\Omega}^\nabla([abc],a,c) R_{cd}\boldsymbol{\alpha}_{db}\\
	&\quad - \frac{1}{3}\boldsymbol{\Omega}^\nabla([adc],a,c) R_{cd}\boldsymbol{\alpha}_{db} - \frac{1}{3}\boldsymbol{\Omega}^\nabla([abd],a,d) \boldsymbol{\alpha}_{dc}\\
	&\quad +\frac{1}{3}\boldsymbol{\Omega}^\nabla([abd],a,d) \boldsymbol{\alpha}_{da} - \frac{1}{3}\boldsymbol{\Omega}^\nabla([acd],a,d) \boldsymbol{\alpha}_{da} + \mathcal{O}(h^4)\\
	&= \frac{1}{3}\boldsymbol{\Omega}^\nabla([abc],a,c) R_{ca}\boldsymbol{\alpha}_{ad} + \frac{1}{3}\boldsymbol{\Omega}^\nabla([abc],a,c) R_{cb}\boldsymbol{\alpha}_{bd} + \frac{1}{3}\boldsymbol{\Omega}^\nabla([abc],a,c) R_{cd}\boldsymbol{\alpha}_{db}\\
	&\quad- \frac{1}{3}\boldsymbol{\Omega}^\nabla([acd],a,d)(\underbrace{\boldsymbol{\alpha}_{db} - \boldsymbol{\alpha}_{da}}_{ = -R_{da}\boldsymbol{\alpha}_{ab}}) + \frac{1}{3}\boldsymbol{\Omega}^\nabla([abd],a,d) (\underbrace{\boldsymbol{\alpha}_{da} - \boldsymbol{\alpha}_{dc}}_{-R_{da}\boldsymbol{\alpha}_{ac}})+ \mathcal{O}(h^4)\\
	&= \frac{1}{3}\boldsymbol{\Omega}^\nabla([abc],a,a)\boldsymbol{\alpha}_{ad} + \frac{1}{3}\boldsymbol{\Omega}^\nabla([acd],a,a)\boldsymbol{\alpha}_{ab} - \frac{1}{3}\boldsymbol{\Omega}^\nabla([abd],a,a) \boldsymbol{\alpha}_{ac}\\
	&\quad + \frac{1}{3} \boldsymbol{\Omega}^\nabla([abc],a,c) (\underbrace{R_{cb} \boldsymbol{\alpha}_{bd} + R_{cd}\boldsymbol{\alpha}_{db}}_{-\boldsymbol{\Omega}^\nabla([cdb],c,b)\boldsymbol{\alpha}_{bd}})+ \mathcal{O}(h^4).
\end{align*}
Since the curvature form is a $2-$form, we can neglect the last term of the calculation for the convergence analysis since their product is of order $\mathcal{O}(h^4).$ We are then left with:
\footnotesize
$$\mathfrak{d}^{\boldsymbol{\nabla}}(\mathrm{\widetilde{Alt}})\ \mathfrak{d}^\nabla \boldsymbol{\alpha}([abcd],a) =\frac{1}{3}\left(\boldsymbol{\Omega}^\nabla([abc],a,a)\boldsymbol{\alpha}_{ad} + \boldsymbol{\Omega}^\nabla([acd],a,a)\boldsymbol{\alpha}_{ab} - \boldsymbol{\Omega}^\nabla([abd],a,a) \boldsymbol{\alpha}_{ac}\right) + \mathcal{O}(h^4) $$
\normalsize
\noindent If we use instead the odd permutations 
$$\mathcal{o}_k = \{\sigma\in S_n\quad \mathrm{sgn}(\sigma) = -1\}$$ 
for the alternation of the discrete form, we denote the alternation operator as:
$$ \quad \mathrm{{Alt}}^\mathcal{o}(\alpha)[x_0,\ldots,x_k] = \frac{2}{\abs{S_{k}}}\sum_{\sigma\in \mathcal{o}_k} -R_{0,\sigma(0)}\cdot\alpha[x_{\sigma(0)},\ldots,x_{\sigma(k)}].$$

\noindent Using Mathematica, NCAlgebra~\cite{NCAlgebra} simplifies this expression to
\begin{align*}
	&\mathfrak{d}^\nabla(\mathrm{{Alt}}^\mathcal{o})\ \mathfrak{d}^\nabla\boldsymbol{\alpha}([abcd],a)\\
    &= \frac{1}{3}\Bigg((R_{ad}R_{dc} - R_{ac})\boldsymbol{\alpha}_{ca} + (R_{ad}R_{db} - R_{ab})\boldsymbol{\alpha}_{bd} + (R_{ab}R_{bd}-R_{ad})\boldsymbol{\alpha}_{db}\\
	&\qquad-(R_{ab}R_{bc} - R_{ac})\boldsymbol{\alpha}_{bc} - (R_{ab}R_{bd}R_{dc} - R_{ac})\boldsymbol{\alpha}_{cb} + (R_{ab}R_{bc} - R_{ac})\boldsymbol{\alpha}_{cd}\\
	&\qquad-2(R_{ab}R_{bd} - R_{ad})\boldsymbol{\alpha}_{dc} + (R_{ac}R_{cb} - R_{ad}R_{db})\boldsymbol{\alpha}_{ba}\Bigg)\\
\end{align*} 
\begin{align*}	
    &=\frac{1}{3}\Bigg(\boldsymbol{\Omega}^\nabla([adc],{a},{c})\boldsymbol{\alpha}_{ca} + \boldsymbol{\Omega}^\nabla([adb],{a},{b})\boldsymbol{\alpha}_{bd}+\boldsymbol{\Omega}^\nabla([abd],{a},{d})\boldsymbol{\alpha}_{db}\\
	&\qquad -\boldsymbol{\Omega}^\nabla([abc],{a},{c})\boldsymbol{\alpha}_{cb} - \boldsymbol{\Omega}^\nabla([abd],{a},{d}) R_{dc}\boldsymbol{\alpha}_{cb} - \boldsymbol{\Omega}^\nabla([adc],{a},{c})\boldsymbol{\alpha}_{cb}\\
	&\qquad + \boldsymbol{\Omega}^\nabla([abc],{a},{c})\boldsymbol{\alpha}_{cd} - 2 \boldsymbol{\Omega}^\nabla([abd],{a},{d})\boldsymbol{\alpha}_{dc} + \boldsymbol{\Omega}^\nabla([acb],{a},{b})\boldsymbol{\alpha}_{ba} - \boldsymbol{\Omega}^\nabla([adb],{a},{b})\boldsymbol{\alpha}_{ba}\Bigg)\\
	&=\frac{1}{3}\Bigg(  \boldsymbol{\Omega}^\nabla([adc],{a},{c})(\boldsymbol{\alpha}_{ca} - \boldsymbol{\alpha}_{cb}) +  \boldsymbol{\Omega}^\nabla([abc],{a},{b})(\boldsymbol{\alpha}_{cd} - \boldsymbol{\alpha}_{cb} - R_{cb}\boldsymbol{\alpha}_{ba})\\
	&\qquad+ \boldsymbol{\Omega}^\nabla([abd],{a},{d})(-R_{db}\boldsymbol{\alpha}_{bd} + \boldsymbol{\alpha}_{db} - R_{dc}\boldsymbol{\alpha}_{cb} - 2\boldsymbol{\alpha}_{dc} + R_{db}\boldsymbol{\alpha}_{ba})\Bigg).
\end{align*}
We again take advantage of the fact that the product of the curvature $2-$form and $\boldd^\nabla\alpha$ is of order $\mathcal{O}(h^4)$ and therefore can be neglected for the analysis of convergence, leading to: 
\begin{align*}
	&\mathfrak{d}^\nabla(\mathrm{{Alt}}^\mathcal{o})\ \mathfrak{d}^\nabla \boldsymbol{\alpha}([abcd],a)\\
    &= -\frac{1}{3} \boldsymbol{\Omega}^\nabla([adc],{a},{c}) R_{ca}\boldsymbol{\alpha}_{ab} + \frac{1}{3} \boldsymbol{\Omega}^\nabla([abc],{a},{c})(\underbrace{\boldsymbol{\alpha}_{cd} - \boldsymbol{\alpha}_{ca}}_{-R_{cd}\boldsymbol{\alpha}_{da} + \mathcal{O}(h^2)})\\
	&\quad+\frac{1}{3} \boldsymbol{\Omega}^\nabla([abd],{a},{d})(-R_{db}\boldsymbol{\alpha}_{db} + \boldsymbol{\alpha}_{db} - R_{dc}\boldsymbol{\alpha}_{cb} - 2\boldsymbol{\alpha}_{dc} + R_{db}\boldsymbol{\alpha}_{ba}) + \mathcal{O}(h^4)\\
	&=-\frac{1}{3}\boldsymbol{\Omega}^\nabla([adc],{a},{c})R_{ca}(\boldsymbol{\alpha}_{ab}) + \frac{1}{3} \boldsymbol{\Omega}^\nabla([acb],{a},{b}) \underbrace{R_{bc}R_{cd}\boldsymbol{\alpha}_{da}}_{-R_{ba}\boldsymbol{\alpha}_{ad} + \mathcal{O}(h^4)}\\
	&\quad - \frac{1}{3} \boldsymbol{\Omega}^\nabla([abd],{a},{d})R_{da}\boldsymbol{\alpha}_{ac} + \frac{1}{3} \boldsymbol{\Omega}^\nabla([abd],{a},{d})(- \mathfrak{d}^\nabla\boldsymbol{\alpha}([dcb],d) + \mathcal{O}(h^4)\\
	& =  - \frac{1}{3} \boldsymbol{\Omega}^\nabla([adc],{a},{a})\boldsymbol{\alpha}_{ab} - \boldsymbol{\Omega}^\nabla([acb],{a},{a})\boldsymbol{\alpha}_{ad} + \boldsymbol{\Omega}^\nabla([adb],{a},{a})\boldsymbol{\alpha}_{ac} + \mathcal{O}(h^4).
\end{align*}
We can now take the mean over both subsets of the group. We denote $e_{ab}, e_{ac}, e_{ad}$ the edges coming out $a$ and by $(e_{ab},e_{ac},e_{ad})$ the tetrahedron that is formed by this edges. Any selection of two edges denotes the triangle formed by these two edges. With this notation, we have 
\begin{equation}
	\begin{aligned}
		\mathfrak{d}^{\boldsymbol{\nabla}} \bolddnab\boldsymbol{\alpha}((e_{ab},e_{ac},e_{ad}),a) = \frac{1}{6}\sum_{\sigma\in S_3}\  \mathrm{sgn}(\sigma)\boldsymbol{\Omega}^\nabla[\sigma(e_{ab}),\sigma(e_{ac})]\boldsymbol{\alpha}(\sigma(e_{ad})) + \mathcal{O}(h^4).
	\end{aligned}\label{eq:wedge_product_bianchi}
\end{equation}
This is the counterpart to the smooth wedge product between the curvature form and a vector-valued form. It highlights the importance of the alternation operator in achieving convergence under refinement. The calculation demonstrates that, with the use of the alternation operator, the form $\mathfrak{d}^{\boldsymbol{\nabla}} \bolddnab\boldsymbol{\alpha}$ can be brought to a higher order, in the form of a smooth wedge product between the curvature and a vector-valued form. Without the alternation operator, this simply is not the case.

\end{document}